\tikzstyle{startstop} = [rectangle, rounded corners, minimum width=3cm, minimum height=1cm,text centered, draw=black, fill=red!30]
\tikzstyle{arrow} = [thick,->,>=stealth]
\tikzstyle{arrow} = [thick,->,>=stealth]
\pgfplotsset{width=10cm,compat=1.17}
\newcommand{\bN}{ {\mathbb{N} } }
\newcommand{\bR}{ {\mathbb{R}} }
\newcommand{\bZ}{ {\mathbb{Z}} }
\newtheorem{theorem}{Theorem}
    \newtheorem{lemma}[theorem]{Lemma}
    \newtheorem{corollary}[theorem]{Corollary}
    \newenvironment{manualtheorem}[1]{%
        \manualtheoreminner
    }{\endmanualtheoreminner}
\theoremstyle{definition} 
    \newtheorem*{definition}{Definition}
    \newtheorem*{remark}{Remark}
\title{Grid entropy in a "choose the best of $D$ samples" model }
\author{Alexandru Gatea \\ \quad \\ Department of Mathematics, University of Toronto \\ \href{mailto:alex.gatea@mail.utoronto.ca}{alex.gatea@mail.utoronto.ca} }
\begin{document}

\maketitle
\thispagestyle{empty}

\begin{abstract}
 Grid entropy is a deterministic quantity inherent to lattice models which captures the entropy of empirical measures along paths that converge weakly to a given target measure. In this paper, we study the limiting behaviour of empirical measures in a model consisting of repeatedly taking $D$ samples from a distribution and picking out one according to an omniscient "strategy." We show that the set of limit points of empirical measures is almost surely the same whether or not we restrict ourselves to strategies which make the choices independently of all past and future choices, and furthermore, that this set coincides with the set of measures with finite grid entropy. This set is convex and weakly compact; we characterize its extreme points as those given by a natural "greedy" deterministic strategy and we compute the grid entropy of said extreme points to be 0. This yields a description of the set of limit points of empirical measures as the closed convex hull of measures given by a density which is $D \cdot Beta(1,D)$ distributed. We also derive a simplified version of a grid entropy-based variational formula for Gibbs Free Energy for this model, and we present the dual formula for grid entropy.
\end{abstract}

%parallel edge model
\tableofcontents

\section{Introduction}

Random sampling is a fundamental area of study in statistics. Countless processes can be simulated as Monte Carlo experiments, with far-reaching applications in a range of fields from biology and economics to business. A rather simple but ubiquitous and surprisingly non-trivial  experiment is that of taking $D$ samples from a distribution and picking out one of these samples according to some omniscient "strategy." 

 This problem is dual to K-means clustering, a well-studied process where one groups a large set of samples into a fixed  $K$ number  of buckets in order to minimize an error function called the quadratic distortion (see \cite{liu2020convergence} for state-of-the-art).

We frame our model in the realm of percolation theory and ask what is the limiting behaviour of  empirical measures  along the event-dependent path of choices. Answering this question provides a gold mine of information about the model - normalized passage time along a path, after all, is just the identity function integrated against the path's empirical measure. 

Recent efforts have gone into better understanding the weak convergence of empirical measures. Some, such as \cite{riekert2021wasserstein}  and  \cite{lei2020convergence}, establish upper bounds on the \mbox{rate} of Wasserstein convergence of empirical measures to the Markov invariant measure/i.i.d. measure respectively they are sampled from. Others focus on describing the limits of empirical measures. 

In \cite{bates}, Bates shows that the set $\mathcal{R}$ of limit points of normalized empirical measures along paths in a generic lattice model is deterministic and derives an explicit variational formula for the limit shape of first passage time as the minimum value of a linear functional over this deterministic set. 

In \cite{gatea}, this author builds on Bates' paper by developing the notion of grid entropy, a deterministic quantity capturing not just whether a certain target measure is a limit point of empirical measures. Grid entropy has previously appeared in \cite{rassoul2014quenched}. \cite{gatea}
proposes a novel approach, realizing grid entropy both as a subadditive limit of entropies of paths with empirical measure within an $\epsilon$-Levy-Prokhorov distance of the target measure, and as the critical exponent of canonical order statistics associated with the Levy-Prokhorov metric, something a priori only known for i.i.d. Bernoulli$p)$ edge labels (see \cite{carmona2010directed}). Though using different approaches, both this author and Rassoul-Agha et al establish the same convex duality between grid entropy and Gibbs Free Energy. Furthermore, this author observes that Bates' set $\mathcal{R}$ almost surely coincides with the set of probability measures with finite grid entropy:
\[ \mathcal{R} = \{\nu \ \mbox{prob meas}: ||\nu|| > -\infty\} \ \mbox{a.s.} \]

One limitation of \cite{gatea} is that only empirical measures along deterministic or polymer paths are considered, rather than allowing for arbitrary mixtures of paths. Even in \cite{bates}, the focus is mainly on empirical measures along geodesics. In this paper, we seek to rectify this by studying empirical measures along  \emph{random} paths (picked according to some probabilistic "strategy," which may or may not yield geodesics or polymer paths). We will show that $\mathcal{R}$ is still the set of limit points, whether or not we assume the strategy is omniscient.

Another deficiency is that grid entropy and the set $\mathcal{R}$, like limit shape and other quantities in this area, are not known to be explicitly computable in most cases. One noteworthy exception is the paper \cite{martinAllan} by Martin, in which an explicit formula for the weak limits of empirical measures along geodesics is derived in the solvable Exponential LPP on $\bZ^2$ model. Also, in \cite{bates} it is established that replacing the lattice $\bZ^D$ by the infinite complete $D$-ary tree $\mathcal{T}_D$, the set $\mathcal{R}$ can be precisely described as a specific sublevel of relative entropy (in terms of $D$).

The $D$-ary tree is in a way the dual model to the one  we explore in this paper, and as one might expect, we will be able to give a description of $\mathcal{R}$ in our setting too. We will precisely characterize the extreme points of $\mathcal{R}$ as those measures whose density is $D \cdot Beta(1,D)$ distributed; $\mathcal{R}$ is then the closed convex hull of these measures.

In addition, we compute the grid entropy of the extreme points of $\mathcal{R}$ to be 0, and we give a simplification of a general formula for grid entropy from \cite{gatea} for this model.

\section{Definitions and  Results}
Let us be more precise about our setup and goals. We consider vertices on $\bZ_{\geq 0}$, $D$ parallel edges $(e_i^1,\ldots, e_i^D)$ for every $i \geq 0$, and an i.i.d. array of edge labels $U_i^j \sim $ Unif[0,1]. We denote by $\Lambda$ the Lebesgue measure on $[0,1]$.

It is convenient to work on the compact space $\mathcal{M}_1$ of probability measures on [0,1], and as we will explain in Section \ref{coupling} we do not lose any generality by restricting to this setting.

%Following \cite{bates}, in order to work on a , we couple these edge weights $\tau_i^j$ to i.i.d. $U_i^j \sim Unif[0,1]$ via a measurable function $\tau: [0,1] \rightarrow \bR$ s.t. $\tau(U_i^j) = \tau_i^j$. This means we can treat the $U_i^j$ as the edge weights and at the end lift everything back to our original space via the pushforward $\tau_{\ast}$.

\begin{center}
\begin{tikzpicture}
\node (p1) at (0pt,0pt) {};
\node (p2) at (50pt,0pt) {};
\node (p3) at (100pt,0pt) {};
\filldraw (50pt,0pt)circle(2pt) (0pt,0pt)circle(2pt)(100pt,0pt)circle(2pt) 
(110pt,0pt)circle(2pt)(120pt,0pt)circle(2pt)(130pt,0pt)circle(2pt);

\draw (25pt,0pt) node {$e_0^2$};
\draw (25pt,16pt) node [ above ] {$e_0^1$};
\draw (25pt,-16pt) node [ below ] {$e_0^3$};
\draw (75pt,0pt) node {$e_1^2$};
\draw (75pt,16pt) node [ above ] {$e_1^1$};
\draw (75pt,-16pt) node [ below ] {$e_1^3$};
\draw (0pt,0pt) node [ below right] {$0$};
\draw (50pt,0pt) node [ below right] {$1$};
\draw (100pt,0pt) node [ below right] {$2$};

\draw [->] (p1) to [out=90,in=90] (p2);
\draw [->] (p1) to [out=-90,in=-90] (p2);
\draw [->] (p2) to [out=90,in=90] (p3);
\draw [->] (p2) to [out=-90,in=-90] (p3);

  \begin{scope}[every path/.style={->}]
       \draw (p1) -- (p2); 
       \draw (p2) -- (p3);
    \end{scope}  

\end{tikzpicture}
\end{center}

%A strategy will essentially be a function from the set of observed labels $(U_i^j)_{i \geq 0, 1 \leq j \leq D}$ to the set of distributions of an infinite vector $(u_i)_{i \geq 0}$ chosen from the observed labels in the sense that $u_i \in \{U_i^j: 1 \leq j \leq D\} \ \forall i \geq 0$. We will make this more precise later. 

Next, we wish to formalize the notion of  strategies. These will be probability measures on the product space coupling the environment $(U_i^j)_{i \geq 0, 1 \leq j \leq D}$ with  the infinite target sequence of indices of the $U_i$ corresponding to the choices $(X_i)_{i \geq 0}$.

\begin{definition}
A strategy  is a probability measure $\chi$ on the product space\\ $([0,1]^D)^{\bZ_{\geq 0}} \times \{1,\ldots,D\}^{\bZ_{\geq 0}}$ s.t. the marginal distribution of  the first coordinate (the environment $(U_i^j) \in ([0,1]^D)^{\bZ_{\geq 0}}$) is a sequence of i.i.d. Unif[0,1]. 

We denote by $(J_i)_{i \geq 0}$  the second coordinate (a random sequence of indices) and define the random vector of choices
\[ (X_0(\chi), X_1(\chi),\ldots) := (U_0^{J_0}, U_1^{J_1},\ldots) \]
Denote by $\frac1n \mu_{0 \rightarrow n}(\chi)$ the empirical measures of this vector:
\[ \frac1n \mu_{0 \rightarrow n}(\chi) = \frac1n \sum \limits_{i=0}^{n-1} \delta_{X_i(\chi)} \in \mathcal{M}_1\]
Also, let $\sigma_{\chi}^i$ be the law of $X_i(\chi)$.

%(ii) Conditioning on the first coordinate $(U_i^j)_{i \geq 0, 1\leq j \leq D}$ yields conditional laws supported on $\{(U_i^{j_i})_{i \geq 0}: \mbox{sequences} \ (j_i)_{i \geq 0} \in \{1,\ldots, D\}^{\bZ_{\geq 0}}\}$.
\end{definition}

\begin{remark}
If $\chi$ conditioned on the environment is a delta mass for a.a. environments $(U_i^j)$ with respect to the product measure  $(\Lambda^{\times D})_{\infty}$, it means that the strategy picks exactly one sequence $(x_i)_{i \geq 0}$ for each set of observed labels. In other words, the strategy is  deterministic. 
\end{remark}

\begin{definition}
We also define (strategy-free) empirical measures along a fixed path $\pi: 0 \rightarrow n$ consisting of edges $(e_0^{j_0},\ldots, e_{n-1}^{j_{n-1}})$ by
\[  \frac1n \mu_{\pi} = \frac1n \sum_{i=0}^{n-1} \delta_{U_i^{j_i}}  \]

\end{definition}

An important type of strategy is one which chooses each $X_k$ independent of $(U_i^j)_{i \neq k, 1 \leq j \leq D}$, the observed values from all but the $k$th trial. These are $\chi$ which are  infinite products of measures arising from "single-edge strategies," i.e. micro-strategies operating at the individual trial level. We call such $\chi$ "independent strategies," as they give rise to independent $X_i$.

\begin{definition}
A single-edge strategy    is a probability measure $\psi$ on the product space\\ $([0,1]^D) \times \{1,\ldots,D\}$ s.t. the marginal distribution of  the first coordinate $(U^j)_{1 \leq j \leq D}$ is a sequence of $D$ i.i.d. Unif[0,1].
%(ii) Conditioning on the first coordinate $(U^j)_{1 \leq j \leq D}$ yields conditional laws supported on $\{U^1, \ldots, U^D\}$.
We denote by $J$  the second coordinate (a random index) and define the random choice
\[ X(\psi):= U^J \]
Also, let $\sigma_{\psi}$ be the law of $X(\psi)$.
\end{definition}
\begin{remark}
If $\psi$ conditioned on $(U^j)_{1 \leq j \leq D}$ is a delta mass for a.a. $(u^j)_{1 \leq j \leq D}$ with respect to the product measure  $\Lambda^{\times D}$, it means that the single-edge strategy picks exactly one sequence $x$ for each set of $D$ observed labels, so $\psi$ is  deterministic.
\end{remark}

 A single-edge strategy $\psi$ is completely determined by the $\psi$-conditional probabilities \\ $p_k: [0,1]^D \rightarrow [0,1]$, $1 \leq k \leq D$ defined as
 \[ p_k(u_1,\ldots, u_D) := P_{\psi}[J=k \mid (U^j)_{1 \leq j \leq D} = (u^j)_{1 \leq j \leq D} ]  \]
We expand on this in more detail in Section \ref{vectorFcns}. The key takeaway is that we may interchangeably refer to both $\psi$ and $\vec{p}$ as a single-edge strategy, and therefore we define $X(\vec{p}):=X(\psi), \sigma_{\vec{p}} :=\sigma_{\psi}$.

\bigskip

In this paper we are interested in the weak limit points of $\frac1n \mu_{0 \rightarrow n}(\chi)$. As it turns out, this set of limit points almost surely coincides with the set of limit points of $\frac1n \mu_{0 \rightarrow n}(\chi)$ over independent strategies $\chi$ only, with the distributions $\sigma_{\vec{p}}$  over single-edge strategies $\vec{p}$, as well as with the set $\mathcal{R}$ of probability measures with finite grid entropy. The following theorem is the main objective of Section \ref{indepReduction} .

 \begin{manualtheorem}{A}\label{part2_A}
A.s. we have
\begin{align*}
    \mathcal{R} &= \bigg\{\mbox{limit pts of} \ \frac1n \mu_{0\rightarrow n}(\chi):  \mbox{strategies} \ \chi \bigg\} \\
    &=\bigg\{\mbox{limit pts of} \ \frac1n \mu_{0\rightarrow n}(\chi):  \mbox{independent strategies} \ \chi \bigg\}\\
    &= \bigg\{\sigma_{\vec{p}}:  \mbox{single-edge strategies} \ \vec{p} \bigg\}
\end{align*}
\end{manualtheorem}

This reduces the problem to working with single-edge strategies $\vec{p}$, which are simpler to handle than generic strategies.  Moreover, every $\sigma_{\vec{p}}$ can be achieved by a "consistent" single-edge strategy $\vec{p}$ with the properties that $p_1(\vec{u})$ is invariant under 1-fixing permutations in $S_D$ and $p_k(\vec{u}) = p_1(u_{D-k+2}, \ldots, u_1,\ldots, u_{D-k+1}) \ \forall k$.

%In particular, the measures $\sigma_{\vec{p}}$ of interest have simple densities $f_{\vec{p}}$ with respect to $\Lambda^{\times {D-1}}$ in terms of $\vec{p}$.

%We may also "scramble" a single-edge strategy by applying a $\Lambda$-measure-preserving bijection $\phi:[0,1] \rightarrow [0,1]$. This results in a new single-edge strategy given by
%\[ p_k^{\phi}(u_1,\ldots, u_D) := p_k(\phi(u_1), \ldots, \phi(u_D)) \]

Now, the sets from Theorem \ref{part2_A} are convex and weakly compact. In Section \ref{extreme} we fully characterize their extreme points.

 \begin{manualtheorem}{B}\label{B}
Let $\vec{p}$ be a single-edge strategy.  The following are equivalent:
\begin{enumerate}[label=(\roman*)]
\item $\sigma_{\vec{p}}$ is an extreme point
\item Any consistent single-edge strategy achieving $\sigma_{\vec{p}}$  must be deterministic.
\item $\sigma_{\vec{p}}$ has a density $f_{\vec{p}}$ which is not constant on sets of positive $\Lambda$-measure and $\sigma_{\vec{p}}$ is given by the following single-edge strategy $\vec{q}$:
\[ q_k(u_1,\ldots, u_D) = \textbf{1}_{\{f_{\vec{p}}(u_k) \geq f_{\vec{p}}(u_i) \ \forall i\}} \ \mbox{for $\Lambda^{\times D}$-a.a.} \ (u_1,\ldots, u_D) \]
In other words, $\sigma_{\vec{p}}$ is achieved by  the deterministic greedy single-edge strategy "choose whichever label yields a higher value when evaluating the density $f_{\vec{p}}$."
%(iv) $\sup_{\phi} E[\sigma_{\vec{p}^{\phi}}] = E[\sigma_{MAX}]$\\
\item $\sigma_{\vec{p}}$ has a density $f_{\vec{p}}$ s.t.  $\frac1D f_{\vec{p}}(Unif[0,1]) \sim \mbox{Beta}(1,D)$.
\end{enumerate}
\end{manualtheorem}

As an immediate corollary, $\mathcal{R}$ is the closed convex hull of these measures.

\begin{remark}
It is important to stress that Theorem \ref{B} holds even in a more general setting where the i.i.d. labels $U_i^j$ follow some finite mean distribution on $\mathcal{R}$ that is not necessarily Unif[0,1]. The only caveat is that the value distribution of the densities $f_{\vec{p}}$ of extreme points might not have as explicit a form as $D \cdot Beta(1,D$); however, the value distribution of the densities $f_{\vec{p}}$ is still identical to the value distribution of $f_{MAX}$, whatever that may be.
\end{remark}

\begin{remark}
In fact, Theorem \ref{B} holds in a discrete setting as well. If the i.i.d. labels $U^j$ are Unif$\{1,\ldots, K\}$ then the convex set of consistent single-edge strategies forms a permutohedron of order $K$.
\end{remark}
\begin{center}
\captionsetup{type=figure}
\includegraphics[scale=0.4]{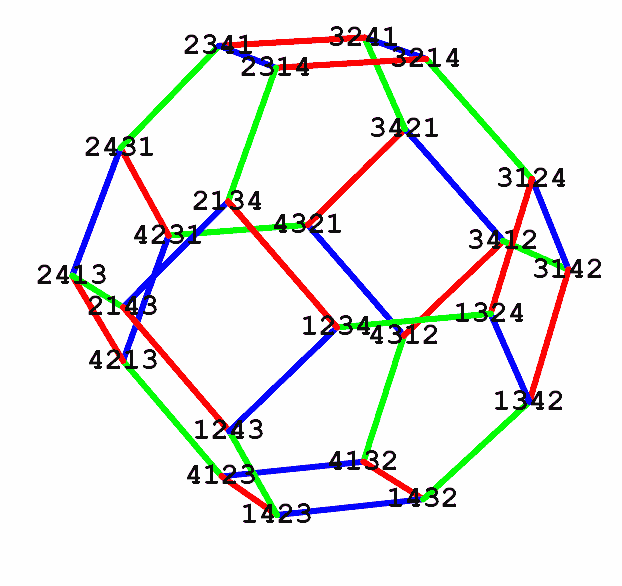}
\captionof{figure}{Permutohedron of order 4, \cite{holroyd}}
\end{center}
The following is the discrete analogue of Theorem \ref{B}. 
\begin{manualtheorem}{B'}\label{B'}
Let $\vec{p}$ be a single-edge strategy.  The following are equivalent:
\begin{enumerate}[label=(\roman*)]
\item $\sigma_{\vec{p}}$ is an extreme point
\item Any consistent single-edge strategy achieving $\sigma_{\vec{p}}$  must be deterministic.
\item $\sigma_{\vec{p}}$  is given by the single-edge strategy "choose whichever label is maximal with respect to the ordering $\alpha(1) < \cdots < \alpha(K)$"  for some permutation $\alpha \in S_K$.
\item $\sigma_{\vec{p}}$ has a probability mass function whose value distribution is 
\[\bigg\{\frac1{K^D}, \frac{2^D-1}{K^D}, \frac{3^D-2^D}{K^D}, \ldots, \frac{K^D-(K-1)^D}{K^D} \bigg\}\]
\end{enumerate}
\end{manualtheorem}
The ordering mentioned in (iii) gives the natural bijection between the extreme consistent single-edge strategies and the extreme points of the permutohedron. For example, the extreme point $1342$ corresponds to the single-edge strategy of choosing the maximal $u^i$ according to the ordering $1<3<4<2$,  hence the bijection maps
\[ 1342 \mapsto \sigma_{1342} = \frac1{4^D} \delta_1 + \frac{2^D-1}{4^D} \delta_3 + \frac{3^D-2^D}{4^D} \delta_4 + \frac{4^D-3^D}{4^D} \delta_2 \]
This bijection extends to a bijection between the entire permutohedron and $\mathcal{R}$ by taking convex combinations. We walk through an explicit example of this phenomenon in Section \ref{discrete}.

\bigskip

Generally speaking there is no known way of computing grid entropy, however, it amazingly can be computed to be 0 for these extreme points. This effectively means that a.s. the number of paths $0 \rightarrow n$ with empirical measures weakly converging to any one of these extreme points is $e^{o(n)}$.

 \begin{manualtheorem}{C}
Let $\sigma_{\vec{p}}$ be an extreme point. Then
\[ || \sigma_{\vec{p}} || = 0 \]
\end{manualtheorem}
Whether \emph{every} measure in $\mathcal{R}$ with grid entropy 0 is an extreme point remains an open question.

Section \ref{grid} focuses on this result, as well as simplified formulas for the Gibbs Free Energy and grid entropy in this model.

 \begin{manualtheorem}{D}
Suppose $\tau: [0,1] \rightarrow \bR$ is a bounded measurable function. Then Gibbs Free Energy with respect to $\tau$ is given by 
\[ G(\tau) = E \bigg[\log \sum_{j=1}^D e^{\tau(U^j)} \bigg] \ \mbox{a.s.}\]
where $U^j$ are i.i.d. Unif[0,1]. For all probability measures $\nu$, grid entropy is given by
\[ -||\nu|| = \sup_{\tau} \bigg[ \langle \tau, \nu \rangle - G(\tau) \bigg] = \sup_{\tau} \bigg[ \langle \tau, \nu \rangle - E \bigg[\log \sum_{j=1}^D e^{\tau(U^j)} \bigg] \bigg] \]
where the supremum is over bounded measurable functions $\tau:[0,1] \rightarrow \bR$ and where $\langle \tau, \nu \rangle$ denotes the integral $ \int_0^1 \tau(u) d\nu$.
\end{manualtheorem}
\begin{remark}
This theorem actually holds for measurable $\tau$ s.t. $E[e^{\beta \tau(U)}] < \infty$ for $U \sim Unif[0,1]$ and can easily be extended to all measurable $\tau$ by truncating $\tau$ in $\langle \tau, \nu \rangle$ at some $C>0$ and taking a supremum over $C$ of the variational formula. We leave the details to the reader and focus on the bounded $\tau$ case.
\end{remark}

But first we delve deeper into the setup and known relevant results.

\section{Preliminaries}

\subsection{More on Strategies}
We list some miscellaneous observations about strategies.

%Given a generic strategy $\chi$, the law $\sigma_{\chi}^k$ of $X_k(\chi)$ has cumulative distribution function
%\[\chi \bigg( \bigg\{((u_i^j), (j_i)) \in ([0,1]^D)^{\bZ_{\geq 0}} \times \{1,\ldots,D\}^{\bZ_{\geq 0}} : u_k^{j_i} \in [0,y] \bigg\} \bigg) \]
%\[\chi(([0,1]^D)^{\bZ_{\geq 0}} \times ([0,y] \times \{1,\ldots,D\}^{\bZ_{\geq 0}\setminus \{i\}})) \]
%Of course, more than one $\chi$ can give rise to the same marginal $\sigma_{\chi}^k$.

%Similarly, for a single-edge strategy $\psi$, the law $\sigma_{\psi}$ of $X(\psi)$ has cumulative distribution function
%\[F_{\psi}(y) = \psi \bigg( \bigg\{((u^j), j) \in [0,1]^D \times \{1,\ldots,D\} : u^{j} \in [0,y] \bigg\} \bigg) \]

%\psi(([0,1]^D) \times [0,y]) \]

It is trivial to see that the sets of strategies $\chi$, of single-edge strategies $\psi$, of distributions $\sigma_{\chi}^i$ for strategies $\chi$ and $i \geq 0$, and of $\sigma_{\psi}$ for single-edge strategies $\psi$ are each closed under convex combinations. The extreme points of the sets of strategies/single-edge strategies are clearly the sets of deterministic strategies/single-edge strategies respectively.

Moreover, if $\chi$ is an independent strategy corresponding to a sequence $(\psi_i)$ of single-edge strategies then $\sigma_{\chi}^i = \sigma_{\psi_i} \ \forall i \geq 0$. In particular, this implies
\begin{equation} \label{EQ2}
    \{\sigma_{\chi}^i: \mbox{independent strategies} \ \chi\} = \{\sigma_{\psi}: \mbox{single-edge strategies} \ \psi\} \ \forall i \geq 0
\end{equation} 

In fact, observe that for any strategy $\chi$ and $i \geq 0$ if we define the strategy $\chi'$ to be the measure determined on product sets by
\[ \chi'(A \times B) := \chi( (0 \ i) A \times (0 \ i) B) \ \forall \  \mbox{measurable} \ A \subseteq ([0,1]^D)^{\bZ_{\geq 0}}, B \subseteq (\{1,\ldots,D\})^{\bZ_{\geq 0}}\]
where $(0 \ i) C$ swaps the 0th and $i$th coordinates of sequences in $C$, then this is easily checked to be a strategy with
\[ \sigma_{\chi}^i = \sigma_{\chi'}^0 \]
Therefore
\begin{equation} \label{EQswap}
    \{\sigma_{\chi}^i: \mbox{strategies} \ \chi\} =  \{\sigma_{\chi}^0: \mbox{strategies} \ \chi\} \ \forall i \geq 0
\end{equation} 

\subsection{Coupling the Edge Weights with I.I.D. Uniforms}\label{coupling}
We briefly explain why we can work in the setting of edge labels $U_i^j \sim Unif[0,1]$ without losing generality.

Suppose instead we start with i.i.d. edge weights $(\tau_i^j)_{i \geq 0, 1 \leq j \leq D} \sim \theta$ for some distribution $\theta$ on $\bR$ with finite mean. We can then couple the environment to uniform random variables as is done in   \cite[Sect.~2.1]{bates}. More specifically, we let our the $\tau_i^j$ be given by
\[ \tau_i^j = \tau(U_i^j) \]
for a measurable function $\tau: [0,1] \rightarrow \bR$ and i.i.d. $U_i^j \sim Unif[0,1]$. If we let $F_{\theta}$ be the cumulative distribution function of $\theta$ then the quantile function 
\[ F_{\theta}^{-}(t) := \inf \{t \in \bR: F_{\theta}(t) \geq x\} \]
is an example of such a $\tau$;  however, our results hold for any such $\tau$ chosen so we let $\tau$ be arbitrary.

We can move from the simplified $(\Lambda, [0,1])$ setting we wish to work in to the initial $(\theta, \bR)$ setting by applying the $\tau$-pushforward.

Now, as mentioned in Theorem \ref{part2_A}, we show that, in the $(\Lambda, [0,1])$ setting, the set of limit points of the empirical measures $\frac1n \mu_{0\rightarrow n}(\chi)$ a.s. coincides with $\mathcal{R}$, which is shown in \ref{gridEntropyPart1} to a.s. coincide with the set of limit points of (strategy-free) empirical measures $\frac1n \mu_{\pi_n}$ along paths $\pi_n:0 \rightarrow n$.

We do not lose generality by performing this coupling because a.s., $\frac1{n_k} \mu_{\pi_{n_k}} \Rightarrow \nu$ implies $\tau_{\ast}(\frac1{n_k} \mu_{\pi_{n_k}}) \Rightarrow \tau_{\ast}(\nu)$.
This is proved in  \cite[Lemma 6.15]{bates}. In fact, if we take $\tau $ to be the quantile function and we assume $\theta$ has continuous cdf $F_{\theta}$ then this implication becomes an if and only if, as seen in \cite[Lemma 5]{gatea}.

\subsection{Grid Entropy}

\begin{definition}The Levy-Prokhorov metric  on the space $\mathcal{M}_+$ of finite non-negative Borel measures on $[0,1]$ is defined by 
\[\rho(\mu, \nu) = \inf \{\epsilon > 0: \mu(A) \leq \nu(A^{\epsilon}) + \epsilon \ \mbox{and} \ \nu(A) \leq \mu(A^{\epsilon}) + \epsilon \ \forall A \in \mathcal{B}([0,1])\}\]
\end{definition}

It is standard that the Levy-Prokhorov metric $\rho$  metrizes weak convergence. Some elementary properties include that the Levy-Prokhorov metric is upper bounded by total variation and it satisfies a certain subadditivity:
\[ \rho(\mu_1 + \mu_2, \nu_1 + \nu_2) \leq \rho(\mu_1, \nu_1) + \rho(\mu_2, \nu_2) \]

For $t \geq 0$ let $\mathcal{M}_t$ denote the space of non-negative Borel measures on $[0,1]$ with total mass $t$.

In a concurrent paper \cite{gatea}, this author studies not just the convergence of empirical measures in a lattice model such as ours but the \emph{entropy} of empirical measures converging to a certain weak limit.

Three quite different but equivalent definitions of grid entropy are given in said paper, and they are linked to the original description of this entropy provided in a 2014 paper \cite{rassoul2014quenched} by Rassoul-Agha and Sepp{\"a}l{\"a}inen. Here we work with the two most relevant to our needs, coming from \cite{gatea}.  Note that in the setting of \cite{gatea}, both direction-fixed and direction-free grid entropy are considered, but in the model we focus on in this paper there is but one unit direction so we will use the direction-free versions of the results from \cite{gatea}. 

We go into a brief description of  this work, as fits our needs in this paper.

Fix any $t \geq 0$ and a target measure $\nu$. We consider the order statistics of the Levy-Prokhorov distance between $\nu$ and the   empirical measures $\frac1n \mu_{\pi}$ varying over all $D^{\lfloor nt\rfloor}$ possible origin-anchored, length $\lfloor nt \rfloor$ paths $\pi$. That is, for every $n \in \bN$ we let
\[ \min_{\pi: 0\rightarrow \lfloor nt \rfloor}^1 \rho\bigg(\frac1n \mu_{\pi}, \nu \bigg) \leq \min_{\pi: 0\rightarrow \lfloor nt \rfloor}^2   \rho\bigg(\frac1n \mu_{\pi}, \nu \bigg) \leq \ldots \leq  \min_{\pi: 0\rightarrow \lfloor nt \rfloor}^{D^{\lfloor nt \rfloor}}  \rho\bigg(\frac1n \mu_{\pi}, \nu \bigg) \]
denote the order statistics value of $ \rho(\frac1n \mu_{\pi}, \nu)$. It is convenient to define
\[ \min_{\pi: 0\rightarrow \lfloor nt \rfloor}^j \rho\bigg(\frac1n \mu_{\pi}, \nu \bigg) := +\infty \ \mbox{for} \ j > D^{\lfloor nt \rfloor}\]

 These order statistics and the paths corresponding to them are of course event-dependent. However,  the following theorem from \cite{gatea}
 shows there is a deterministic critical exponent where the empirical measures along the paths corresponding to these order statistics change from converging a.s. to $\nu$ to  a.s. diverging away from $\nu$. This critical exponent turns out to be precisely the negative convex conjugate of Gibbs Free Energy. We state the full theorem as presented in \cite{gatea}, noting that part (i) has been previously shown in \cite{carmona2010directed} to hold in the case of Bernoulli edge labels, and that parts (ii) and (iv) follow immediately from the earlier work of Rassoul-Agha and Sepp{\"a}l{\"a}inen   \cite{rassoul2014quenched}. 
 
\begin{theorem}\label{gridEntropyPart1}
\begin{enumerate}[label=(\roman*)]
\item[]
\item For any target  measure $\nu \in \mathcal{M}$, its grid entropy is defined to be the deterministic quantity
\begin{align*}
 || \nu|| &:= \sup \bigg\{\alpha \geq 0: \lim_{n \rightarrow \infty} \min_{\pi: 0 \rightarrow \lfloor nt \rfloor}^{\lfloor e^{\alpha n} \rfloor}  \rho\bigg(\frac1{n} \mu_{\pi}, \nu \bigg) = 0  \ \mbox{a.s.} \bigg\} \\
 &\in \{-\infty\} \cup [0, t\log D]
 \end{align*}
 where the value is $-\infty$ if the set of $\alpha$'s is empty. Then  grid entropy is the critical exponent where the $\min \limits_{\pi: 0\rightarrow \lfloor nt \rfloor}^{\lfloor e^{\alpha n} \rfloor} \rho(\frac1n \mu_{\pi}, \nu )$ change from converging  to 0 to a.s. having a $\liminf \limits_{n \rightarrow \infty} >0$.\\
For example, the grid entropy of the original distribution, $\Lambda$, is maximal:\\ $||\Lambda|| = t \log D$.
\item Grid entropy is positive-homogeneous, satisfies the reverse-triangle inequality
\[ ||\nu|| + ||\xi|| \leq || \nu + \xi||\]
and is concave and upper-semicontinuous in $\nu$.
\item Consider the deterministic, weakly closed set 
\[ \mathcal{R}^t:= \{\nu \in \mathcal{M}: ||\nu|| \geq 0 \}\]
Then $\mathcal{R}^t \subseteq \mathcal{M}_t$,  $\nu \ll \Lambda \ \forall \nu \in \mathcal{R}^t$ and 
\[ \mathcal{R}^t = \bigg\{\mbox{limit pts of} \ \frac1n \mu_{\pi} \ \mbox{for} \ \pi: 0\rightarrow \lfloor nt \rfloor \bigg\} \ \mbox{a.s.} \]
\item Grid entropy is the negative convex conjugate of $\beta$-Gibbs Free Energy. More concretely, for $\beta > 0$,
\begin{equation}\label{dualFormula} -||\nu|| = \sup_{\tau} \bigg[ \beta \langle \tau, \nu \rangle - G_t^{\beta}(\tau) \bigg] \ \forall \nu \in \mathcal{M}_t
\end{equation}
where the supremum is taken over bounded measurable $\tau: [0,1] \rightarrow \bR$, where \\ $\langle \tau, \nu \rangle = \int_0^1 \tau(u) d\nu$, where $G_t^{\beta}(\tau)$ is the length $t$ $\beta$-Gibbs Free Energy
\[ G_t^{\beta}(\tau) := \lim_{n \rightarrow \infty} \frac1n \log \sum_{\pi: 0 \rightarrow \lfloor nt \rfloor} e^{\beta T(\pi)} \]
and where $T(\pi) = \sum \limits_{e\in \pi} \tau(U_e) = \langle \tau, \mu_{\pi} \rangle$ is the passage time along $\pi$.
\item Any $\nu \in \mathcal{R}^t$ satisfies the following upper bound on the sum of the grid entropy and the relative entropy with respect to Lebesgue measure on [0,1]:
$$t D_{KL} (\nu||\Lambda) + ||\nu|| \leq  t \log D$$
 where $D_{KL}$ denotes relative entropy (or Kullback-Leibler divergence).
\end{enumerate}
  \end{theorem}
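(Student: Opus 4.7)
My strategy is to invert the order of presentation and establish part (iv) first, then derive part (i) from it, with parts (ii), (iii), (v) following as structural consequences. The advantage is that Gibbs Free Energy $G_t^\beta(\tau) := \lim_n \tfrac{1}{n}\log \sum_{\pi:0\to \lfloor nt\rfloor} e^{\beta \langle \tau, \mu_\pi\rangle}$ is cleaner to handle than the counting function $N_n^\epsilon(\nu) := \#\{\pi: 0 \to \lfloor nt\rfloor: \rho(\tfrac{1}{n}\mu_\pi, \nu) < \epsilon\}$, as the partition function avoids the factor-of-$2$ Levy-Prokhorov blow-up from path concatenation. In our $\bZ_{\geq 0}$ lattice, $Z_n^\beta(\tau)$ factors into a product of independent pieces, so the strong law of large numbers gives $G_t^\beta(\tau) = t\,\bE[\log \sum_{j=1}^D e^{\beta \tau(U^j)}]$ almost surely; in the more general lattice setting of \cite{gatea}, the same conclusion follows from Kingman's subadditive ergodic theorem applied to $\log Z_n^\beta(\tau)$ using independence of disjoint edge blocks. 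I would then \emph{define} grid entropy $||\nu||$ as the negative convex conjugate $\sup_\tau[\beta\langle \tau, \nu\rangle - G_t^\beta(\tau)]$; concavity and upper-semicontinuity in $\nu$ are then automatic, and positive-homogeneity in $t$ follows from the linear $t$-dependence of $G_t^\beta$.

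To identify this with the order-statistic description in (i), I use a Laplace/Varadhan-style argument. For the upper bound, note $Z_n^\beta(\tau) \geq N_n^\epsilon(\nu) \cdot \exp(n\beta \langle \tau, \nu\rangle - O(n\|\tau\|_\infty \epsilon))$ for any bounded $\tau$; taking $\tfrac{1}{n}\log$, passing to the limit, optimizing in $\tau$, and sending $\epsilon \to 0$ yields $\limsup_n \tfrac{1}{n}\log N_n^\epsilon(\nu) \leq ||\nu||$. For the matching lower bound, one tilts the uniform measure on paths by a near-optimal $\tau^*$ in the variational formula and shows the tilted measure concentrates on paths with $\tfrac{1}{n}\mu_\pi \approx \nu$, giving $N_n^\epsilon(\nu) \geq e^{n||\nu|| - o(n)}$ almost surely. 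The critical-exponent description in (i) is then immediate from the equivalence $\min_\pi^j \rho(\tfrac{1}{n}\mu_\pi, \nu) < \epsilon \iff N_n^\epsilon(\nu) \geq j$ applied at $j = \lfloor e^{\alpha n}\rfloor$.

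For the remaining parts: (ii) is largely structural — the reverse-triangle inequality $||\nu||+||\xi|| \leq ||\nu+\xi||$ follows from concatenating $\epsilon$-approximating paths for $\nu$ and $\xi$ and invoking subadditivity of $\rho$ to control the $2\epsilon$ blow-up in the limit, while concavity is a consequence of positive-homogeneity plus reverse-triangle. For (iii), $\mathcal{R}^t \subseteq \mathcal{M}_t$ is immediate from mass conservation, weak closure of $\mathcal{R}^t$ follows from upper-semicontinuity, and the equality with the set of weak limit points of empirical measures is precisely the $\alpha = 0$ case of the critical-exponent characterization (positivity of $N_n^\epsilon$ for all $\epsilon > 0$ is equivalent to $\nu$ being a weak limit point). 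The absolute continuity $\nu \ll \Lambda$ follows from (v) together with the fact that $D_{KL}(\nu\|\Lambda) = +\infty$ when $\nu$ has a singular component. Finally, (v) is a direct Sanov-type count: the number of paths of length $\lfloor nt\rfloor$ with empirical measure near $\nu$ is at most $D^{\lfloor nt\rfloor} \exp(-\lfloor nt\rfloor D_{KL}(\nu\|\Lambda) + o(n))$, since edge labels under uniform-on-paths sampling are i.i.d.\ $\Lambda$-distributed.

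The main technical obstacle is the matching lower bound in the Laplace identification of $||\nu||$ with $\lim_{\epsilon \to 0}\lim_n \tfrac{1}{n}\log N_n^\epsilon(\nu)$. In our factored $\bZ_{\geq 0}$ setting this reduces to a standard Sanov-type large deviations principle for the empirical distribution of i.i.d.\ samples from the $\tau^*$-tilted Lebesgue measure, but it still requires careful control of the attainment of the supremum in the variational formula on a non-compact function space, and a justification for the interchange of limits in $\epsilon$ and $n$. The rest of the argument is essentially convex-analytic bookkeeping paired with the standard subadditive toolkit.
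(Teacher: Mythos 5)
This theorem is not proved in the present paper; it is imported verbatim from \cite{gatea} (the author's concurrent paper), with the paper explicitly noting that (ii) and (iv) follow from Rassoul-Agha and Sepp\"al\"ainen \cite{rassoul2014quenched} and that (i) was known for Bernoulli labels from \cite{carmona2010directed}. So there is no in-paper proof to match your proposal against, and any assessment must be against the strategy those references are known to take and the internal consistency of your own sketch.

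Your proposal inverts the logical order described in the paper: \cite{gatea} \emph{defines} grid entropy by the order-statistic critical exponent in (i) and then proves the duality (iv) as a theorem, whereas you \emph{define} $\|\nu\|$ as the negative Legendre transform of $G_t^\beta$ and try to recover (i) via a Laplace/Varadhan argument. This is closer in spirit to the Rassoul-Agha--Sepp\"al\"ainen route and is a legitimate alternative. One advantage is that concavity, upper semicontinuity, and the reverse-triangle inequality in (ii) then follow \emph{automatically} from the conjugate definition together with the additivity $G_{t+s}^\beta = G_t^\beta + G_s^\beta$; you need not invoke path concatenation at all, and the path-concatenation argument you sketch for reverse-triangle is a vestige of the other definition and would be circular until (i) is proved. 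Positive homogeneity in $\nu$ likewise follows from linearity of $G_t^\beta$ in $t$, as you note.

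However, there are two genuine gaps. First, the matching lower bound in the Laplace identification --- showing that the conjugate value is actually achieved by an exponential number of near-$\nu$ paths --- is the entire content of (i), and you only gesture at it via a tilting argument whose concentration step and interchange of $\epsilon \to 0$ and $n \to \infty$ you flag but do not resolve; in the one-dimensional factored model this is doable, but in the general lattice setting of \cite{gatea} the tilted path measure is no longer a product and the argument is materially harder. Second, your argument for (v) asserts that ``edge labels under uniform-on-paths sampling are i.i.d. $\Lambda$-distributed.'' This is true only in the annealed sense (averaging over both path and environment). Quenched --- for a fixed environment, which is what $N_n^\epsilon(\nu)$ counts --- the edge label at step $i$ is uniform over the $D$ observed values $\{U_i^1,\dots,U_i^D\}$, not Unif$[0,1]$. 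To get the upper bound $t D_{KL}(\nu\|\Lambda)+\|\nu\|\le t\log D$ you need to pass from the annealed Sanov count for $\bE[N_n^\epsilon(\nu)]$ to an almost sure bound, e.g.\ via Markov's inequality along a sparse subsequence and Borel--Cantelli; as written, the Sanov step is applied to the wrong object.
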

  
  \begin{remark}
  An immediate but highly non-trivial consequence of (i) is the existence of random length $\lfloor nt \rfloor$ paths whose empirical measures converge weakly to a given target $\nu \in \mathcal{R}^t$ a.s.. Indeed, the event-dependent paths corresponding to $\min \limits_{\pi: 0 \rightarrow \lfloor nt \rfloor}^1 \rho(\frac1n \mu_{\pi}, \nu)$ do the job.
  \end{remark}

\begin{remark}
(i), seen as the statement that the negative convex conjugate of Gibbs Free Energy is the critical exponent of the order statistics $\min_{\pi: 0\rightarrow \lfloor nt \rfloor}^j \rho(\frac1n \mu_{\pi}, \nu)$, has been previously proved in the case of Bernoulli($p$) edge labels in \cite{carmona2010directed} [Corollary 2]. 
\end{remark}

By positive-homogeneity it suffices to work with the case $t=1$. For the sake of notation we drop the 1 in $\mathcal{R}^1, G_1^{\beta}(\tau)$ for the rest of the paper.

Once we determine the extreme points of $\mathcal{R}$, we compute their grid entropies to be 0. We also give a limit-free formula for the $\beta$-Gibbs Free Energy $G^{\beta}(\tau)$ in our model, which renders the convex duality formula \eqref{dualFormula} more practical. 

\subsection{Measure-Preserving Bijections in \texorpdfstring{$\bR^n$}{Rn}}
We will encounter measure-preserving bijections in $\bR$, so we briefly outline the required notions in this section.

\begin{definition}
Let $\mathcal{B}(\bR^n)$ denote the Borel $\sigma$-algebra of $\bR^n$ and let $\mu$ be a Borel probability measure on $\bR^n$. Let $A, A' \in \mathcal{B}(\bR^n)$ with $\mu(A') = \mu(A) > 0$. A bijection $\phi: A \rightarrow A'$ is said to be $\mu$-measure-preserving if $\mu(B) = \mu(\phi(B)) $ for all $B \in \mathcal{B}(\bR^n), B \subseteq A$.
\end{definition}

Nishiura proves in \cite[Theorem 4]{nishiura} the existence of a $\mu$-measure-preserving bijection between Borel sets of equal, positive but not full $\mu$-measure, given that $\mu$ is nonatomic. We will only need the following simplified version of this theorem.

\begin{theorem}\label{thmNishiura}
Suppose $\theta$ is a Borel probability measure on $[0,1]$ that is absolutely continuous with respect to Lebesgue measure $\Lambda$ on $[0,1]$. Let $A, A' \in \mathcal{B}([0,1])$ s.t. $\theta(A) = \theta(A') \in (0,1)$. Then there exists a $\theta$-measure-preserving bijection $\phi: A \rightarrow A'$.
\end{theorem}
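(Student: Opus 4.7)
The plan is to obtain Theorem \ref{thmNishiura} as an immediate special case of the full Nishiura result \cite[Theorem 4]{nishiura}, which guarantees, for any nonatomic Borel probability measure $\mu$ on $\bR^n$, a $\mu$-measure-preserving bijection between any two Borel subsets of equal, positive, not full $\mu$-measure. All that is really needed is to verify the hypotheses of the general statement in our simplified setting.

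First I would verify nonatomicity of $\theta$: for any $x \in [0,1]$, since $\theta \ll \Lambda$ and $\Lambda(\{x\}) = 0$, we obtain $\theta(\{x\}) = 0$, so $\theta$ has no atoms. Second, the hypothesis $\theta(A) = \theta(A') \in (0,1)$ supplies precisely the remaining requirements (equal, positive, not full $\theta$-measure). Applying Nishiura's theorem to the pair $(A, A')$ then produces the desired $\theta$-measure-preserving bijection $\phi : A \to A'$.

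For a self-contained alternative, I would proceed in two stages. Writing $F_\theta(x) = \theta([0,x])$, the CDF is continuous (by nonatomicity) and pushes $\theta$ forward to $\Lambda$ on $[0,1]$ modulo the $\theta$-null set where the density $f = d\theta/d\Lambda$ vanishes. This reduces the problem to building a $\Lambda$-measure-preserving bijection between two Borel subsets of $[0,1]$ of equal Lebesgue measure, which can be done by a standard dyadic slicing argument: at scale $2^{-k}$, partition $[0,1]$ into $2^k$ equal subintervals, pair up the $\Lambda$-masses that $A$ and $A'$ contribute to each dyadic cell, and assemble the resulting partial bijections through a measurable selection as $k \to \infty$.

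The main (and essentially only) obstacle in the from-scratch approach is bookkeeping: the density $f$ can have a $\Lambda$-positive zero set, so the pushforward by $F_\theta$ is not a clean homeomorphism, and the dyadic matching must be done modulo $\theta$-null sets while still producing a genuine bijection (not merely a measure-equivalence) on all of $A$. Nishiura's proof precisely handles these measure-theoretic technicalities in the general nonatomic setting, so the cleanest route is simply to cite it after the two hypothesis checks above.
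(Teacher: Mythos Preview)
Your proposal is correct and matches the paper's treatment: the paper does not prove Theorem~\ref{thmNishiura} at all but simply presents it as a simplified special case of \cite[Theorem~4]{nishiura}, exactly as you do in your first paragraph. Your explicit verification that $\theta \ll \Lambda$ forces nonatomicity is the only step the paper leaves implicit, and your alternative self-contained sketch is extra content the paper does not attempt.
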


Preliminaries finished, our next goal is to show that we may restrict ourselves to working with single-edge strategies without losing generality.

\section{Reducing the Problem}\label{indepReduction}
Over the course of this section, we prove the following theorem, which allows us to reduce the problem of describing the set of limit points of the empirical measures $\frac1n \mu_{0\rightarrow n}(\chi)$ to characterizing the set of achievable distributions $\sigma_{\psi}$ for single-edge strategies $\psi$.

\begin{theorem}\label{limitPts}
A.s., the following sets are equal:
\begin{enumerate}[label=(\roman*)]
\item $\{\mbox{limit pts of} \ \frac1n \mu_{0\rightarrow n}(\chi):  \mbox{ strategies} \ \chi\}$
\item $\mathcal{R} := \{\nu \in \mathcal{M}_1: ||\nu|| \geq 0\}$
\item $\{\sigma_{\chi}^0: \mbox{ strategies} \ \chi\}$
\item $\{\sigma_{\psi}: \mbox{single-edge strategies} \ \psi\}$
\item $\{\mbox{limit pts of} \ \frac1n \mu_{0\rightarrow n}(\chi):  \mbox{independent strategies} \ \chi\}$
\item $\{\mbox{limit pts of} \ \frac1n \mu_{0\rightarrow n}(\chi):  \mbox{i.i.d. strategies} \ \chi\}$
\end{enumerate}
\end{theorem}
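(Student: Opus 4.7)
The plan is to close the loop of containments (vi) $\subseteq$ (v) $\subseteq$ (i) $\subseteq$ (ii) $\subseteq$ (iv) $\subseteq$ (vi), and then use equations \eqref{EQ2} and \eqref{EQswap} to fold in (iii). The inclusions (vi) $\subseteq$ (v) $\subseteq$ (i) are immediate from the definitions. For (iv) $\subseteq$ (vi), given a single-edge strategy $\psi$, I take its i.i.d.\ extension $\chi$ (the infinite product measure with each block $(U_i^1, \ldots, U_i^D, J_i)$ distributed as $\psi$); then the $X_i(\chi)$ are i.i.d.\ with law $\sigma_\psi$, and the strong law of large numbers yields $\frac1n \mu_{0 \to n}(\chi) \Rightarrow \sigma_\psi$ a.s. For (iii) $=$ (iv), equation \eqref{EQ2} already gives $\{\sigma_\chi^0 : \text{independent}\ \chi\} = \{\sigma_\psi\}$, and for an arbitrary strategy $\chi$, the marginal of $\chi$ on the $i=0$ block $([0,1]^D \times \{1, \ldots, D\})$ is a single-edge strategy $\psi$ with $\sigma_\psi = \sigma_\chi^0$, closing that gap.

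For (i) $\subseteq$ (ii): any strategy $\chi$ produces an environment-dependent path $(J_0, J_1, \ldots)$, so $\frac1n \mu_{0 \to n}(\chi) = \frac1n \mu_{\pi_n}$ for a (random) path $\pi_n : 0 \to n$. Theorem \ref{gridEntropyPart1}(iii) asserts that a.s.\ every weak limit point of empirical measures along paths $0 \to n$ lies in $\mathcal{R}$, and this immediately applies to the empirical measures coming from $\chi$.

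The crux is (ii) $\subseteq$ (iv), which must produce a \emph{deterministic} single-edge strategy $\psi$ realizing the deterministic target $\nu \in \mathcal{R}$. I would invoke Theorem \ref{gridEntropyPart1}(iii) to obtain environment-dependent paths $\pi_n : 0 \to n$ with edge indices $J_i^{(n)} \in \{1, \ldots, D\}$ such that $\frac1n \mu_{\pi_n} \Rightarrow \nu$ a.s., then average out the randomness by defining the deterministic probability measure on the compact space $[0,1]^D \times \{1, \ldots, D\}$
\[ \psi_n := \bE\!\left[ \frac1n \sum_{i=0}^{n-1} \delta_{(U_i^1, \ldots, U_i^D,\, J_i^{(n)})} \right]. \]
By Prokhorov, a subsequence $\psi_{n_k}$ converges weakly to some probability measure $\psi$. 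Because each $(U_i^1, \ldots, U_i^D) \sim \Lambda^{\times D}$, the $[0,1]^D$-marginal of every $\psi_n$, and hence of $\psi$, equals $\Lambda^{\times D}$, so $\psi$ is a bona fide single-edge strategy. The pushforward of $\psi_n$ under the continuous map $(\vec u, j) \mapsto u^j$ equals $\bE[\frac1n \mu_{\pi_n}]$, which converges weakly to $\nu$ by bounded convergence applied to $\int f\, d(\cdot)$ for each $f \in C([0,1])$. Hence $\sigma_\psi = \nu$, as desired. The main obstacle is arranging for $\psi$ to be a deterministic strategy despite the randomness of the underlying paths; the averaging step above is precisely what handles this, while the pushforward/compactness arguments are soft and standard.
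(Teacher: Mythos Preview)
Your argument is essentially correct and, in two places, cleaner than the paper's. First, your observation that for an arbitrary strategy $\chi$ the marginal of $\chi$ on the $i=0$ block $[0,1]^D\times\{1,\dots,D\}$ is itself a single-edge strategy with $\sigma_\psi=\sigma_\chi^0$ gives $(\mathrm{iii})=(\mathrm{iv})$ directly; the paper instead writes $\sigma_\chi^0$ as an integral of conditional single-edge laws and only concludes $\sigma_\chi^0\in\mathrm{cl}(\mathrm{iv})$. Second, your compactness argument on $[0,1]^D\times\{1,\dots,D\}$ for $(\mathrm{ii})\subseteq(\mathrm{iv})$ is more direct: it fuses the paper's Lemma~\ref{3rd} (which shows $\mathcal{R}\subseteq\mathrm{cl}(\mathrm{iii})$ via Levy--Prokhorov estimates), Lemma~\ref{equivalenceIndep}, and the separate closure Theorem~\ref{closure} into a single soft step. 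In your route the closedness of $(\mathrm{iv})$ falls out automatically from $(\mathrm{iv})=(\mathrm{ii})=\mathcal{R}$.

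There is one genuine gap you should patch. Your step $(\mathrm{iv})\subseteq(\mathrm{vi})$ argues that for each fixed single-edge strategy $\psi$, the i.i.d.\ extension gives $\frac1n\mu_{0\to n}(\chi)\Rightarrow\sigma_\psi$ a.s. But the theorem asserts that \emph{on a single full-measure event} all six sets coincide, and there are uncountably many $\psi$. You need to pass through a countable dense subset of the deterministic set $(\mathrm{iv})$ (which you have already shown equals the compact set $\mathcal{R}$), apply your SLLN argument to each of those countably many $\psi$'s, intersect the resulting full-measure events, and then use that $(\mathrm{vi})$ is weakly closed to recover all of $(\mathrm{iv})$. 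The paper handles this explicitly in its Corollary following the Glivenko--Cantelli theorem; your write-up should do the same.
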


\begin{remark}
A priori it not clear that  (iii)-(iv) are weakly closed. We show this as part of our proof.
\end{remark}

We argue via a chain of inclusions. We begin with the most trivial of these.

\begin{lemma}\label{1st}
A.s. we have
\[ \bigg\{\mbox{limit pts of} \ \frac1n \mu_{0\rightarrow n}(\chi):  \mbox{ strategies} \ \chi \bigg\} \subseteq \mathcal{R}  \]

\end{lemma}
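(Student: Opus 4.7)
The plan is to reduce directly to Theorem \ref{gridEntropyPart1}(iii), which supplies a full-measure event $E$ in the environment space on which the set of weak limit points of the family $\{\frac1n \mu_\pi : \pi: 0 \to n,\ n \geq 1\}$ over deterministic origin-anchored paths coincides with $\mathcal{R}$.

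The key observation is that for any strategy $\chi$ and any realization $(\vec{u}, (j_i)_{i \geq 0})$ in the product space $([0,1]^D)^{\bZ_{\geq 0}} \times \{1,\ldots,D\}^{\bZ_{\geq 0}}$, the empirical measure $\frac1n \mu_{0\rightarrow n}(\chi)$ is literally $\frac1n \mu_{\pi_n}$ for the deterministic path $\pi_n$ with edges $(e_0^{j_0}, \ldots, e_{n-1}^{j_{n-1}})$. So for each realization we are simply looking at empirical measures along a (random, event-dependent) sequence of deterministic paths $\pi_n: 0 \to n$. Now condition on the environment lying in $E$, which happens with full probability since the environment marginal of every strategy is by definition the original i.i.d. Uniform measure. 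On this event, if $\nu$ is a weak subsequential limit of $\frac1{n_k}\mu_{0\rightarrow n_k}(\chi) = \frac1{n_k}\mu_{\pi_{n_k}}$ for some $n_k \to \infty$, then $\nu$ is in particular a weak limit point of the family of empirical measures along origin-anchored deterministic paths, and Theorem \ref{gridEntropyPart1}(iii) immediately yields $\nu \in \mathcal{R}$. Since $E$ depends only on the environment, this argument applies uniformly to all strategies $\chi$ at once.

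I do not anticipate any substantive obstacle here: the entire content is carried by Theorem \ref{gridEntropyPart1}(iii), and the reduction amounts to the trivial observation that any strategy, however elaborate or randomized, realizes a deterministic length-$n$ path for each sample, so the attainable limit points can only be a subset of those attainable by picking deterministic paths with full foresight of the environment.
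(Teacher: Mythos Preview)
Your proposal is correct and follows essentially the same route as the paper: both reduce immediately to Theorem \ref{gridEntropyPart1}(iii) via the observation that any realization of $\frac1n\mu_{0\rightarrow n}(\chi)$ is literally $\frac1n\mu_{\pi_n}$ for some deterministic path $\pi_n$. Your version is slightly more explicit about the uniformity over strategies (noting that the full-measure event from Theorem \ref{gridEntropyPart1}(iii) depends only on the environment), which the paper leaves implicit.
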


\begin{proof}
Fix a strategy $\chi$. Suppose $\frac1{n_k} \mu_{0 \rightarrow n_k}(\chi) \Rightarrow \nu$ for some observed edge labels \\ $(u_i^j)_{i \geq 0, 1 \leq j \leq D}$. Then letting $\pi_{n}: 0 \rightarrow n$ be the paths corresponding to the indices \\ $(J_0(\chi), \ldots, J_{n-1}(\chi))$ conditioned on these same observed edge labels, we get\\  $ \frac1{n_k} \mu_{\pi_{n_k}}=\frac1{n_k} \mu_{0 \rightarrow n_k}(\chi) \Rightarrow \nu$.  Thus $\nu$ is a limit point of the $\frac1n \mu_{\pi}$. 

The desired inclusion follows from
\[ \mathcal{R} = \bigg\{\mbox{limit pts of} \ \frac1n \mu_{\pi}:  \mbox{ paths} \ \pi:0 \rightarrow n \bigg\} \ \mbox{a.s.} \ \mbox{by Theorem \ref{gridEntropyPart1}(iii)}\]
\end{proof}

\subsection{Expected Value of Empirical Measures}\label{sect31}

For any  strategy $\chi$,
\[E[\delta_{X_i(\chi)}] =  \int_{[0,1]} \delta_x d\sigma_{\chi}^i(x) = \sigma_{\chi}^i \ \mbox{and} \ E \bigg[\frac1n \mu_{0\rightarrow n}(\chi)\bigg] = \frac1n \sum_{i=0}^{n-1} \sigma^i_{\chi} \]
Recalling \eqref{EQswap} and the fact that $\{\sigma^0_{\chi}: \mbox{strategies} \ \chi\}$ is closed under convex combinations, it follows that every $E [\frac1n \mu_{0\rightarrow n}(\chi)]$ is contained in $\{\sigma^0_{\chi}: \mbox{strategies} \ \chi\}$. Therefore
\begin{align*}
\bigg\{E \bigg[\frac1n \mu_{0\rightarrow n}(\chi)\bigg]:  \mbox{ strategies} \ \chi \bigg\} &= \{E[\delta_{X_0(\chi)}]:  \mbox{ strategies} \ \chi\} \\
&= \{\sigma_{\chi}^0:  \mbox{ strategies} \ \chi \}
\end{align*}
Also it is clear that
\[ \{E[\delta_{X(\psi)}]:  \mbox{single-edge strategies} \ \psi\} = \{\sigma_{\psi}:  \mbox{ single-edge strategies} \ \psi \}\]

A simple Tonelli argument establishes that the closures of the  two sets above coincide.

\begin{lemma} \label{equivalenceIndep}
\[ \mbox{cl}\{\sigma_{\psi}: \mbox{ single-edge strategies} \ \psi \} = \mbox{cl}\{\sigma^0_{\chi}: \mbox{ strategies} \ \chi \}  \]
\end{lemma}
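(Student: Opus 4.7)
The plan is to show the stronger statement that the two sets are already equal before taking closures, from which equality of closures follows immediately. That is, I would aim to prove
\[ \{\sigma_{\psi}: \mbox{single-edge strategies } \psi\} = \{\sigma^0_{\chi}: \mbox{strategies } \chi\}, \]
which is cleaner than working with closures directly.

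For the easy inclusion $\subseteq$, given a single-edge strategy $\psi$ I would lift it to a full strategy $\chi$ by applying $\psi$ independently at each time index (i.e.\ take the i.i.d.\ product strategy corresponding to $\psi$ at every coordinate). Then $X_0(\chi)$ has the same law as $X(\psi)$ by construction, so $\sigma^0_{\chi} = \sigma_{\psi}$. This uses nothing more than the definition of an independent strategy recalled in the excerpt.

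For the reverse inclusion $\supseteq$, given a general strategy $\chi$ on $([0,1]^D)^{\bZ_{\geq 0}} \times \{1,\ldots,D\}^{\bZ_{\geq 0}}$, the natural candidate single-edge strategy is the pushforward $\psi$ of $\chi$ under the projection
\[ \bigl((U_i^j)_{i \geq 0,\, 1 \leq j \leq D},\,(J_i)_{i \geq 0}\bigr) \;\longmapsto\; \bigl((U_0^j)_{1 \leq j \leq D},\, J_0\bigr). \]
Tonelli enters to check two things: (a) $\psi$ is a valid single-edge strategy, i.e.\ its $[0,1]^D$-marginal is $\Lambda^{\times D}$, which follows by integrating out all times $i \geq 1$ in the hypothesis on the first-coordinate marginal of $\chi$; and (b) $X(\psi) = U^J$ under $\psi$ has the same law as $X_0(\chi) = U_0^{J_0}$ under $\chi$. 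In concrete terms, if I write $p_k(u^1, \ldots, u^D)$ for the $\psi$-conditional probability $P_{\psi}[J = k \mid (U^j) = (u^j)]$, Tonelli says this is precisely the function obtained from the $\chi$-conditional $P_{\chi}[J_0 = k \mid (U_i^j)_{i \geq 0, j}]$ by marginalizing out the environment at times $i \geq 1$; testing against bounded measurable functions of $U^J$ versus $U_0^{J_0}$ then gives $\sigma_{\psi} = \sigma^0_{\chi}$ by the same swap of integrations.

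There is no genuine obstacle here beyond careful measure-theoretic bookkeeping; the ``simple Tonelli argument'' alluded to in the text is exactly the step that marginalizes the conditional probabilities $P_{\chi}[J_0 = k \mid \cdot]$ over coordinates $i \geq 1$ and confirms that what remains is an honest single-edge conditional with the correct distribution. Equality of the sets then upgrades for free to equality of their weak closures.
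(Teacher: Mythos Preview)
Your argument is correct and in fact proves the stronger statement that the two sets coincide \emph{before} taking closures. The paper takes a different route: for the inclusion $\supseteq$, rather than projecting $\chi$ to a single $\psi$, it conditions $\chi$ on each fixed future environment $(u_i^j)_{i \geq 1}$ to obtain a family of single-edge strategies $\psi_{(u_i^j)_{i \geq 1}}$, writes
\[ \sigma_{\chi}^0 = \int \sigma_{\psi_{(u_i^j)_{i \geq 1}}} \, d\Lambda^{\infty}\bigl((u_i^j)_{i \geq 1}\bigr), \]
and then invokes convexity together with weak closure to conclude $\sigma_{\chi}^0 \in \mbox{cl}\{\sigma_{\psi}\}$. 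Your pushforward/marginalization argument is more elementary and avoids the closure entirely; it also makes the later Theorem~\ref{closure} (weak closedness of $\{\sigma_{\psi}\}$) unnecessary for establishing (iii) $=$ (iv) in Theorem~\ref{limitPts}. One minor remark: the Tonelli justification you describe is slightly more elaborate than needed---the equality $\sigma_{\psi} = \sigma_{\chi}^0$ is immediate from the fact that $U^J$ as a function on $[0,1]^D \times \{1,\ldots,D\}$, composed with the projection, is exactly $U_0^{J_0}$, so the laws agree by definition of pushforward.
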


\begin{proof} By \eqref{EQ2} $\{\sigma_{\psi}\} = \{\sigma^0_{\chi}: \mbox{indep strategies} \ \chi\}$ so it suffices to show
\[ \mbox{cl}\{\sigma_{\psi}: \mbox{ single-edge strategies} \ \psi \} \supseteq \{\sigma^0_{\chi}: \mbox{ strategies} \ \chi \} \]
Given a strategy $\chi$ and observed "future" edge labels $(u_i^j)_{i \geq 1, 1 \leq j \leq D} \in [0,1]^{\bZ_{\geq 1}}$, define $\psi_{(u_i^j)_{i \geq 1, 1 \leq j \leq D}}$ to be $\chi$  conditioned on the rest of the environment $(u_i^j)_{i \geq 1, 1 \leq j \leq D}$. 

The $\psi_{(u_i^j)_{i \geq 1, 1 \leq j \leq D}}$ are easily seen to be single-edge strategies. Since integrating over the entire environment $(U_i^j)_{i \geq 0, 1 \leq j \leq D}$ is equivalent to integrating over $(U_0^j)_{1 \leq j \leq D}$ first and then over $(U_i^j)_{i \geq 1, 1 \leq j \leq D}$, we have 
\[ \sigma_{\chi}^0 = \int_{([0,1]^D)^{\bZ_{\geq 1}}} \sigma_{\psi_{(u_i^j)_{i \geq 1, 1 \leq j \leq D}}} d\Lambda^{\infty}((u_i^j)_{i \geq 1, 1 \leq j \leq D})  \]
By  convexity and weak closure it follows that 
\[\sigma_{\chi}^0 \in   \mbox{cl}\{\sigma_{\psi}: \mbox{ single-edge strategies} \ \psi \}  \]
\end{proof} 

Next, we show that this closure of $\{\sigma_{\chi}^0: \mbox{strategies} \ \chi\}$ contains all probability measures with finite grid entropy. 

\begin{lemma} \label{3rd}
We have
\[\mathcal{R}:= \{\nu \in \mathcal{M}_1: ||\nu|| \geq 0\}  \subseteq \mbox{cl}\{E[\delta_{X_0(\chi)}]: \mbox{ strategies} \ \chi \} \]
as deterministic sets.
\end{lemma}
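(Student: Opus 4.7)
The plan is, given $\nu \in \mathcal{R}$, to construct a sequence of strategies whose deterministic averaged measures $E[\tfrac{1}{n}\mu_{0\to n}(\chi_n)]$ approximate $\nu$ weakly. Since $\|\nu\| \geq 0$, Theorem \ref{gridEntropyPart1}(iii) (and the remark following it) guarantees that almost surely there exist paths $\pi_n: 0 \to n$ with $\tfrac{1}{n}\mu_{\pi_n} \Rightarrow \nu$. Concretely, I would take $\pi_n = \pi_n(\omega)$ to be the lexicographically first path achieving the order statistic $\min^1_{\pi: 0 \to n} \rho(\tfrac{1}{n}\mu_\pi, \nu)$; this is a measurable function of the environment (being the argmin of a measurable function over the finite set of length-$n$ paths), and the aforementioned theorem ensures $\tfrac{1}{n}\mu_{\pi_n} \Rightarrow \nu$ a.s.

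Next, define a strategy $\chi_n$ by declaring that, conditional on the environment $(U_i^j)$, its index sequence is the delta mass on $(j_0(n), \ldots, j_{n-1}(n), 1, 1, \ldots)$, where the $j_i(n)$ are the edge indices of $\pi_n$ (with the tail filled in arbitrarily). Measurability of $\pi_n$ makes this a bona fide strategy, and by construction $\tfrac{1}{n}\mu_{0\to n}(\chi_n) = \tfrac{1}{n}\mu_{\pi_n} \Rightarrow \nu$ almost surely. Applying Fubini and bounded convergence against any bounded continuous $f:[0,1]\to \bR$ gives
\[ \int f \, dE\!\left[\tfrac{1}{n}\mu_{0\to n}(\chi_n)\right] = E\!\left[\int f \, d\tfrac{1}{n}\mu_{0\to n}(\chi_n)\right] \xrightarrow[n \to \infty]{} \int f \, d\nu, \]
so the deterministic probability measures $E[\tfrac{1}{n}\mu_{0\to n}(\chi_n)]$ converge weakly to $\nu$.

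Finally, the opening computation of Section \ref{sect31} yields $E[\tfrac{1}{n}\mu_{0\to n}(\chi_n)] = \tfrac{1}{n}\sum_{i=0}^{n-1}\sigma_{\chi_n}^i$. Combining the swap identity \eqref{EQswap} with the fact that $\{\sigma_\chi^0:\, \mbox{strategies}\ \chi\}$ is closed under convex combinations places this average back in the set, so $E[\tfrac{1}{n}\mu_{0\to n}(\chi_n)] \in \{\sigma_\chi^0:\, \mbox{strategies}\ \chi\}$ for every $n$. Passing to the weak limit delivers
\[ \nu \in \mbox{cl}\{\sigma_\chi^0:\, \mbox{strategies}\ \chi\} = \mbox{cl}\{E[\delta_{X_0(\chi)}]:\, \mbox{strategies}\ \chi\}, \]
which is the required inclusion. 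The only mildly delicate point is the measurable selection of $\pi_n(\omega)$; this is routine since we are optimizing over a finite (albeit environment-indexed) set of paths.
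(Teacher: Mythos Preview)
Your proof is correct and follows essentially the same approach as the paper: both use the environment-dependent path $\pi_n$ achieving $\min^1_{\pi:0\to n}\rho(\tfrac1n\mu_\pi,\nu)$, embed it into a strategy $\chi_n$, and then show that the deterministic measures $E[\tfrac1n\mu_{0\to n}(\chi_n)]\in\{\sigma_\chi^0\}$ converge weakly to $\nu$. The only difference is in this last step, where the paper carries out an explicit Levy--Prokhorov estimate via a good-set/bad-set split, whereas your bounded-convergence argument against continuous test functions is cleaner and equally valid.
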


\begin{proof}
Suppose this is not the case, say  $\exists \nu \in \mathcal{R} \cap (\mbox{cl}\{E[\delta_{X_0(\chi)}]: \mbox{ strategies} \ \chi \})^C$. Thus there exists $\epsilon > 0$ s.t. $B_{\epsilon}(\nu) \cap cl\{E[\delta_{X_0(\chi)}]: \chi\} = \emptyset$.

Let $\pi_n: 0 \rightarrow n$ be the environment-dependent path corresponding to $\min \limits_{\pi: 0 \rightarrow n}^1 \rho(\frac1n \mu_{\pi}, \nu)$. It is crucial to note that $\pi_n$ depends on the observed edge labels $((U_i^j)_{0 \leq i < n, 1\leq j \leq D})$ of the first $n$ trials only.

By definition of grid entropy, $\frac1n \mu_{\pi_n} \Rightarrow \nu$ a.s.. In particular, we get convergence in probability: 
\[ \lim_{n \rightarrow \infty} P \bigg( \rho \bigg(\frac1n \mu_{\pi_n} , \nu \bigg) \geq \frac{\epsilon}2 \bigg) = 0\]

Thus $\exists n \in \bN$ s.t.
\[ P \bigg( \rho \bigg(\frac1n \mu_{\pi_n} , \nu \bigg) \geq \frac{\epsilon}2 \bigg)  < \frac{\epsilon}4 \]
We claim that $\rho(E[\frac1n \mu_{\pi_n}], \nu)<\epsilon$. Let $\mathcal{E}$ be the event $\{\rho (\frac1n \mu_{\pi_n} , \nu ) \geq \frac{\epsilon}2\}$ so $P(\mathcal{E})< \frac{\epsilon}4$. We split the expectation:
\[E \bigg[ \frac1n \mu_{\pi_n} \bigg] = \int_{\mathcal{E}} \frac1n \mu_{\pi_n} dP + \int_{\mathcal{E}^C} \frac1n \mu_{\pi_n} dP \]
Since the Levy-Prokhorov metric is upper bounded by the total variation, then
\[\rho \bigg(\int_{\mathcal{E}} \frac1n \mu_{\pi_n} dP, \nu \cdot P(\mathcal{E}) \bigg) \leq \bigg( \bigg| \bigg |\frac1n\mu_{\pi_n} \bigg|\bigg|_{TV} + ||\nu||_{TV} \bigg)P(\mathcal{E}) = 2P(\mathcal{E}) < \frac{\epsilon}2 \]
On $\mathcal{E}^C$,  we have $\rho (\frac1n \mu_{\pi_n} , \nu ) < \frac{\epsilon}2$. By definition of $\rho$, for all measurable $A \in \mathcal{B}([0,1])$,
\begin{align*} 
\bigg(\int_{\mathcal{E}^C} \frac1n \mu_{\pi_n} dP\bigg) (A) &= \int_{\mathcal{E}^C} \frac1n \mu_{\pi_n} (A) dP \\
& \leq \int_{\mathcal{E}^C} \nu(A^{\frac{\epsilon}2} )+ \frac{\epsilon}2 dP \leq \nu(A^{\frac{\epsilon}2} ) P(\mathcal{E}^C) +\frac{\epsilon}2 
\end{align*}
and similarly
\begin{align*} 
 \nu(A) P(\mathcal{E}^C) &= \int_{\mathcal{E}^C} \nu(A) dP \\
 &\leq   \int_{G^C} \frac1n \mu_{\pi_n} (A^{\frac{\epsilon}2}) + \frac{\epsilon}2 dP  \leq \bigg( \int_{\mathcal{E}^C} \frac1n \mu_{\pi_n} dP\bigg) (A^{\frac{\epsilon}2}) + \frac{\epsilon}2 
\end{align*}
hence
\[ \rho \bigg(\int_{\mathcal{E}^C} \frac1n \mu_{\pi_n} dP, \nu P(\mathcal{E}^C) \bigg) \leq \frac{\epsilon}2   \]
By subadditivity of $\rho$,
\begin{align*}
 \rho \bigg( E \bigg[\frac1n \mu_{\pi_n} \bigg], \nu \bigg) &=  \rho \bigg( \int_{\mathcal{E}} \frac1n \mu_{\pi_n} dP + \int_{\mathcal{E}^C} \frac1n \mu_{\pi_n} dP, \nu P(\mathcal{E}) + \nu P(\mathcal{E}^C) \bigg) \\
 &\leq \rho \bigg( \int_{\mathcal{E}} \frac1n \mu_{\pi_n} dP, \nu P(\mathcal{E}) \bigg) + \rho\bigg(\int_{\mathcal{E}^C} \frac1n \mu_{\pi_n} dP,  \nu P(\mathcal{E}^C) \bigg) \\
 & < \epsilon 
 \end{align*}

It remains to construct a strategy $\chi$ for which $\frac1n \mu_{\pi_n} = \frac1n \mu_{0 \rightarrow n} (\chi)$ for this fixed $n$. But this is trivial since conditioned on the observed edge labels  $(U_i^j)_{0 \leq i \leq n-1, 1 \leq j \leq D}$ from the first $n$ trials, $\pi_n$ is a deterministic path $0 \rightarrow n$ (namely the one which minimizes $\rho(\frac1n \mu_{\pi}, \nu)$). 

To be concrete, consider the product measure $\chi'$ on $([0,1]^D)^{\bZ_{\geq n}} \times \{1,\ldots,D\}^{\bZ_{\geq n}}$ given by
\[ \chi' = \Lambda^{\infty} \times \delta_{(1,1,\ldots)} \]
Heuristically, $\chi'$ is a partial strategy always picking the "top" edge label $U_i^1$ for $i \geq n$.

Also let $\chi''$ be the measure on $([0,1]^D)^{\bZ_{0 \leq i < n}} \times \{1,\ldots,D\}^{\bZ_{0 \leq i < n}}$ determined by
\begin{align*} 
&\chi''(A \times \{(j_0,\ldots,j_{n-1}) \}) \\
&= \Lambda^{\infty}(\{(u_i^j)_{0 \leq i < n, 1 \leq j \leq D} \in A: \pi_n((u_i^j)_{0 \leq i < n, 1 \leq j \leq D}) = (j_0,\ldots, j_{n-1})\}) 
\end{align*}
$\forall A \in \mathcal{B}(([0,1]^D)^{\bZ_{0 \leq i < n}}), (j_i)_{0 \leq i < n} \in \{1,\ldots,D\}^{\bZ_{0 \leq i < n}}$ where $\pi_n((u_i^j)_{0 \leq i < n, 1 \leq j \leq D})$ denotes the sequence of $n$ indices $(j_0,\ldots, j_{n-1})$ corresponding to the path $\pi_n$ when the first $n$ trials yield observed labels $(u_i^j)_{0 \leq i < n, 1 \leq j \leq D}$.
That is, conditioned on the observed values from the first $n$ trials, $\chi''$ always picks the path corresponding to $\pi_n$.

\bigskip

Now consider the strategy $\chi =  \chi'' \times \chi'$.  $\mu_{0 \rightarrow n}(\chi) = \mu_{\pi_n}$  hence  $E[\frac1n \mu_{\pi_n}] = E[\frac1n \mu_{0\rightarrow n}(\chi)]$. 

This contradicts  
\[B_{\epsilon}(\nu) \cap cl \bigg\{E \bigg[\frac1n \mu_{0 \rightarrow n}(\chi) \bigg]: \chi \bigg\}  = B_{\epsilon}(\nu) \cap cl\{E[\delta_{X_0(\chi)}]: \chi\} = \emptyset\]
\end{proof}

Combining Lemmas \ref{1st}-\ref{3rd},  we get that a.s., (vi) $\subseteq$ (v) $\subseteq$ (i) $\subseteq$ (ii) $\subseteq $ cl(iii) = cl(iv)  in Theorem \ref{limitPts}. 
To complete the proof we only need to show the last  inclusion cl (iv) $\subseteq$ (vi) and the fact that (iv) is weakly closed.

\subsection{What Happens with Independent Strategies}
Let us now focus on the case of independent  strategies. Recall that these look like $\chi = \bigtimes \limits_{i=0}^{\infty} \psi_i$ where  $(\psi_i)_{i \geq 0}$ are single-edge strategies.

First, consider the even simpler case of i.i.d. strategies $\chi$, where all the $\psi_i$ are identical, resulting in i.i.d. $X_i$. Then the Glivenko-Cantelli Theorem \cite[Thm.~2.4.7]{durrett}  gives that a.s. the empirical measures $\frac1n \mu_{0 \rightarrow n}(\chi)$ converge weakly to the common law of the $X_i$'s, $\sigma_{\psi_0}$.

\begin{theorem}[Glivenko-Cantelli Theorem]\label{thm1}
Let $F_{\gamma}$ be the cumulative distribution function of $\gamma$, let $Y_i \sim \gamma$ be i.i.d.  random variables and let 
\[F_{n}(y) = \frac1{n} \sum_{i=1}^n \mathbf{1}_{\{Y_i \leq y\}} \]
be the cumulative distribution functions of the empirical measures. Then 
\[\sup_y |F_n(y) - F_{\gamma}(y)| \rightarrow 0 \ \mbox{a.s. as} \ n \rightarrow \infty \]

\end{theorem}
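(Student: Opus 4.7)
The plan is to upgrade the pointwise Strong Law of Large Numbers to uniform convergence, combining a discretization of the real line via quantiles of $\gamma$ with the monotonicity of $F_n$ and $F_\gamma$. The only probabilistic input will be the classical SLLN; everything else is an analytic sandwich argument.

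First, for each fixed $y \in \bR$, the indicators $\mathbf{1}_{\{Y_i \leq y\}}$ are i.i.d. Bernoulli$(F_\gamma(y))$, so the SLLN yields $F_n(y) \to F_\gamma(y)$ almost surely. The same argument applied to $\mathbf{1}_{\{Y_i < y\}}$ yields $F_n(y^-) \to F_\gamma(y^-)$ almost surely. I would next fix $\epsilon > 0$ and choose finitely many reals $-\infty = y_0 < y_1 < \cdots < y_k = +\infty$ such that $F_\gamma(y_i^-) - F_\gamma(y_{i-1}) \leq \epsilon$ for all $1 \leq i \leq k$, with the convention $F_\gamma(y_0) = 0$, $F_\gamma(y_k^-) = 1$; concretely, one may take $k > 1/\epsilon$ and pick $y_i$ to be any point with $F_\gamma(y_i^-) \leq i/k \leq F_\gamma(y_i)$, which is possible because the set of values taken by $F_\gamma$ is dense in $[0,1]$ modulo jumps, and jumps can only help the inequality.

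For any $y \in [y_{i-1}, y_i)$, monotonicity of $F_n$ and $F_\gamma$ gives
\[ F_n(y_{i-1}) - F_\gamma(y_i^-) \;\leq\; F_n(y) - F_\gamma(y) \;\leq\; F_n(y_i^-) - F_\gamma(y_{i-1}), \]
and adding and subtracting $F_\gamma(y_{i-1})$ on the right and $F_\gamma(y_i^-)$ on the left, together with the discretization bound, yields
\[ |F_n(y) - F_\gamma(y)| \;\leq\; \max\bigl(|F_n(y_{i-1}) - F_\gamma(y_{i-1})|,\, |F_n(y_i^-) - F_\gamma(y_i^-)|\bigr) + \epsilon. \]
Taking the supremum over $y$ and then over $i$ bounds $\sup_y |F_n(y) - F_\gamma(y)|$ by $\max_{0 \leq i \leq k} |F_n(y_i) - F_\gamma(y_i)| + \max_{1 \leq i \leq k} |F_n(y_i^-) - F_\gamma(y_i^-)| + \epsilon$.

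On the intersection (over the finitely many $i$) of the full-measure events on which $F_n(y_i) \to F_\gamma(y_i)$ and $F_n(y_i^-) \to F_\gamma(y_i^-)$, the two maxima above tend to $0$, so $\limsup_{n \to \infty} \sup_y |F_n(y) - F_\gamma(y)| \leq \epsilon$ almost surely. Intersecting the resulting full-measure events along a countable sequence $\epsilon_m \downarrow 0$ delivers the claim. The only real subtlety is that the SLLN must be invoked for both $F_\gamma(y_i)$ and its left limit $F_\gamma(y_i^-)$ in order to handle possible jumps of $F_\gamma$ correctly; this is the step where one would go wrong if one naively tried to use only the right-continuous values.
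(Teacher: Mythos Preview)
Your proof is correct and is essentially the standard argument (SLLN at finitely many quantile points, plus a monotonicity sandwich to control the intermediate $y$, with care taken for left limits at jumps). The paper does not actually supply its own proof of this statement: it records the Glivenko--Cantelli Theorem as a classical result and cites Durrett \cite[Thm.~2.4.7]{durrett}, so there is nothing to compare against beyond noting that your argument is the textbook one found there.
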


As an immediate corollary we get the outstanding inclusion (vi) $\supseteq $  cl (iv) mentioned at the end of Section \ref{sect31}.

\begin{corollary}

\end{corollary}
A.s. we have
\[  \bigg\{\mbox{limit pts of} \ \frac1n \mu_{0 \rightarrow n}(\chi): \ \mbox{i.i.d. strategies $\chi$}\bigg\} \supseteq cl\{\sigma_{\psi}: \mbox{single-edge strategies} \ \psi\} \]
\begin{proof}
Fix a dense subset $\mathcal{O}$ of the deterministic, weakly compact set cl (iv) s.t. $\mathcal{O} \subseteq \mbox{(iv).}$ For every $\sigma_{\psi} \in \mathcal{O}$, apply Theorem \ref{thm1} to the i.i.d. strategy $\chi = \bigtimes \limits_{i=0}^{\infty} \psi$ to get  $\frac1n \mu_{0\rightarrow n}(\chi) \Rightarrow \sigma_{\psi}$ a.s.. Therefore $\mathcal{O} \subseteq$ (vi) a.s.. The inclusion follows since (vi) is weakly closed.\\
\end{proof}

Thus we have shown that the sets (i), (ii), cl(iii), cl(iv), (v), (vi) in Theorem \ref{limitPts} are equal a.s..

Before proceeding to look further into single-edge strategies, it is worth mentioning a  version of the Glivenko-Cantelli Theorem for independent but not necessarily i.d. sequences from \cite{wellner}. It gives further insight into the limit points of the empirical measures $\frac1n \mu_{0\rightarrow n}(\chi)$ for independent strategies $\chi$.

\begin{theorem}\label{generalGlivenko}
Let $Y_i$ be a sequence of independent random variables with distributions $\sigma_i$. Define $\overline{\sigma}_n = \frac1n (\sigma_0+\ldots + \sigma_{n-1})$ to be the averages of these distributions and let \\ $\frac1n \mu_n = \frac1n (\delta_{Y_0}+\ldots+\delta_{Y_{n-1}})$  be the  empirical measures. If $\{\overline{\sigma}_n\}$ is tight then $\rho(\overline{\sigma}_n, \frac1n \mu_n) \rightarrow 0$ a.s..
\end{theorem}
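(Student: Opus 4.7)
The plan is to reduce Prokhorov convergence to convergence of integrals against a countable family of continuous test functions, and then apply a classical strong law of large numbers for independent bounded sequences to each such function separately. In the paper's $[0,1]$-valued setting tightness is automatic and $\mathcal{M}_1$ is weakly compact, so the statement reduces to this compact case; in a general Polish space the tightness hypothesis is invoked in place of compactness via Prokhorov's theorem.

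First I would fix a countable uniformly dense subset $\mathcal{F} \subset C([0,1])$ (for instance, polynomials with rational coefficients). For each $f \in \mathcal{F}$ the variables $Z_i := f(Y_i) - E[f(Y_i)]$ are independent, mean zero, and bounded by $2\|f\|_\infty$, so Kolmogorov's criterion $\sum_i \mathrm{Var}(Z_i)/i^2 < \infty$ applies and gives
\[
\frac{1}{n}\sum_{i=0}^{n-1} Z_i \;=\; \int f \, d\!\left(\tfrac{1}{n}\mu_n\right) - \int f \, d\overline{\sigma}_n \;\longrightarrow\; 0 \quad \text{a.s.}
\]
Intersecting the countably many full-measure events across $\mathcal{F}$ produces a single event $\Omega_0$ of probability one on which this holds for every $f \in \mathcal{F}$. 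Uniform density of $\mathcal{F}$ together with the fact that $\frac{1}{n}\mu_n$ and $\overline{\sigma}_n$ are probability measures then extends the convergence to every $f \in C([0,1])$ on $\Omega_0$ via a standard $2\epsilon$-approximation.

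The last step is a deterministic subsequence argument on $\Omega_0$: if $\rho(\overline{\sigma}_n, \tfrac{1}{n}\mu_n) \not\to 0$ then along some subsequence $n_k$ we have $\rho \geq \delta > 0$; by weak compactness of $\mathcal{M}_1$, pass to a further subsequence along which $\overline{\sigma}_{n_k} \Rightarrow \mu$ and $\frac{1}{n_k}\mu_{n_k} \Rightarrow \nu$; by the previous paragraph $\int f\, d\mu = \int f\, d\nu$ for every continuous $f$, so $\mu = \nu$, contradicting $\rho(\mu,\nu) \geq \delta$. The main subtlety is ensuring that the full-measure event $\Omega_0$ is chosen \emph{before} any subsequence extraction, which is precisely why the countable choice of $\mathcal{F}$ is crucial: without it we would be intersecting uncountably many null sets. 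In the non-compact Polish setting a preliminary step is needed to justify the compactness appeal: from the tightness hypothesis on $\{\overline{\sigma}_n\}$, use Markov's inequality and Borel--Cantelli applied to the indicators $\mathbf{1}_{K_\epsilon^c}(Y_i)$ to deduce a.s. tightness of $\{\tfrac{1}{n}\mu_n\}$, and then invoke Prokhorov's theorem in place of weak compactness of $\mathcal{M}_1$.
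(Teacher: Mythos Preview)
The paper does not actually prove this theorem; it is quoted from \cite{wellner} and used as a black box, so there is no in-paper argument to compare against.

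Your proof for the compact $[0,1]$ case is correct and standard: Kolmogorov's SLLN handles each test function, countability of $\mathcal{F}$ lets you pass to a single full-measure event, and the subsequence-plus-compactness argument closes the Prokhorov convergence. This is all the paper needs, since it only invokes the result on $[0,1]$ where tightness is automatic.

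One small imprecision in your general Polish-space sketch: ``Markov's inequality and Borel--Cantelli'' applied to $\mathbf{1}_{K_\epsilon^c}(Y_i)$ does not quite work as stated, because Markov only gives $P\bigl(\tfrac{1}{n}\mu_n(K^c) > 2\epsilon\bigr) \leq 1/2$, which is not summable. The clean fix is to reuse the same Kolmogorov SLLN on the centered indicators $\mathbf{1}_{K_m^c}(Y_i) - \sigma_i(K_m^c)$ for a countable family of compacts $K_m$ with $\sup_n \overline{\sigma}_n(K_m^c) < 2^{-m}$; this yields $\limsup_n \tfrac{1}{n}\mu_n(K_m^c) \leq 2^{-m}$ a.s.\ for each $m$, hence a.s.\ tightness of the empirical measures, after which your Prokhorov argument goes through unchanged.
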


In our case, $\mathcal{M}_1$ is weakly compact so any sequence of probability measures in $\mathcal{M}_1$ is tight. 

Thus, for independent strategies $\chi = \bigtimes \limits_{i=0}^{\infty} \psi_i$ with $\psi_i$ single-edge strategies, we have \\ $\rho(\overline{\sigma_n}, \frac1n \mu_{0 \rightarrow n}(\chi)) \rightarrow 0 \ \mbox{a.s.}$ where $\overline{\sigma_n}$ denote the averages of the distributions of $X_i(\chi)$:
\[\overline{\sigma}_n  = \frac1n (\sigma^0_{\chi} + \ldots +\sigma^{n-1}_{\chi}) = \frac1n (\sigma_{\psi_0} + \ldots +\sigma_{\psi_{n-1}})= \sigma_{\frac1n(\psi_0+\ldots + \psi_{n-1})} \]

 This not only confirms that cl(iv) and (v) are equal, but it tells us that a.s. the empirical measures $\frac1n \mu_{0 \rightarrow n}(\chi)$ for independent strategies $\chi$ have the exact same limit points as the distributions $\sigma_{\frac1n(\psi_0+\ldots + \psi_{n-1})}$ corresponding to the law of $X$ chosen according to the average of the single-edge strategies $\psi_i$.

\section{Single-Edge Strategies Revisited}
\subsection{Single-Edge Strategies in Terms of Conditional Probabilities}\label{vectorFcns}
From what we have shown thus far, we only need to focus on single-edge strategies, as the set of limit points of empirical measures $\frac1n \mu_{0 \rightarrow n}(\chi)$ coincides with 
\[ cl \{\sigma_{\psi}: \mbox{single-edge strategies} \ \psi\} \]

Rather than using the measure definition of single-edge strategies, it  is  more practical to work with the vector $\vec{p}$ of conditional probabilities defined as
 \[ p_k(u_1,\ldots, u_D) := P_{\psi}[J = k \mid (U^j)_{1 \leq j \leq D} = (u^j)_{1 \leq j \leq D}]  \]
 
Then $\psi$ evaluated on product sets is given by
\begin{equation}\label{PsiFormula} \psi(A \times B) = \int_A \sum_{j=1}^D p_j (u_1,\ldots, u_D) \mathbf{1}_{\{j \in B\}} \ d\Lambda^D(u_1,\ldots, u_D) \ \forall A \in \mathcal{B}([0,1]^D), B \subseteq \{1,\ldots, D\}
\end{equation}
and $\sum \limits_{j=1}^D p_j \equiv 1$ everywhere. 

Intuitively, each $p_k(u_1,\ldots, u_D)$ is the probability of choosing $u_k$ when the observed samples are $(u_1,\ldots,u_D)$. This justifies the term "single-edge strategy," because $\vec{p}$ is prescribing the strategy by which we make our choice once we have the $D$ observed samples.

Of course, the vector $\vec{p}=(p_1,\ldots, p_D)$ determines $\psi$ by  \eqref{PsiFormula}. And if all the $p_j$ are 0-1 valued $\Lambda^{\times D}$-a.e. then the single-edge strategy $\psi$ is deterministic.

Even though different $\vec{p}$ may give rise to the same $\psi$, we  conflate the two notions and call both $\psi$ and $\vec{p}$ the single-edge strategy. We use $X(\vec{p}), X(\chi)$ and $\sigma_{\vec{p}}, \sigma_{\psi}$ and $F_{\vec{p}}, F_{\psi}$ interchangeably. At the end of the day, all that  matters is whether two vectors $\vec{p}, \vec{q}$ yield the same law $\sigma_{\vec{p}}$ of $X(\vec{p})$.

We may now write the cumulative distribution function of $\sigma_{\vec{p}}$ in terms of $\vec{p}$:
\begin{equation} \label{cdf}
\begin{split}
F_{\vec{p}}(y) &=\int_{[0,1]^D} \sum_{j=1}^D p_j (u_1,\ldots, u_D) \mathbf{1}_{[0,y]}(u_j) d\Lambda^D(u_1,\ldots, u_D)  \\
&= \int_{[0,y] \times [0,1]^{D-1}}  p_1 (u_1,\ldots, u_D)  d\Lambda^D(u_1,\ldots, u_D)  \\
&+ \ldots + \int_{[0,1]^{D-1} \times [0,y] }  p_D (u_1,\ldots, u_D)  d\Lambda^D(u_1,\ldots, u_D) 
\end{split}
\end{equation}
Therefore $\sigma_{\vec{p}} \ll \Lambda$ with  density 
\begin{equation}\label{density}
\begin{split}
 f_{\vec{p}}(y) &=\int_{[0,1]^{D-1}} p_1(y,u_2,\ldots, u_D) d\Lambda^{D-1}(u_2,\ldots, u_D)  \\
 &+ \cdots  +  \int_{[0,1]^{D-1}} p_D(u_1,\ldots,u_{D-1},y) d\Lambda^{D-1}(u_1,\ldots, u_{D-1})
 \end{split}
 \end{equation}
It is clear that $f_{\vec{p}} \in [0,D]$.\\

Let us give an example that will make everything clear. Consider the deterministic single-edge strategy $\vec{p}^{MAX}$ that always chooses the maximum of the observed edge labels. In our notation,
\[ p_j^{MAX}(u_1, \ldots, u_D) = \mathbf{1}_{\{u_j \geq u_k \ \forall 1 \leq k \leq D\}}\]
The resulting distribution $\sigma_{MAX}$ has cdf
\[F_{MAX}(y)  = P(U^j \leq y \ \forall 1 \leq j \leq D) =  y^D \]
and density
\[f_{MAX}(y) = D y^{D-1}\]
This single-edge strategy  plays a crucial role later in our description of the extreme points of $\{\sigma_{\vec{p}}\}$.

We will also be interested in single-edge strategies "scrambled" by a $\Lambda$-measure-preserving bijection.

\begin{definition}
Let $\vec{p}$ be a single-edge strategy and let $\phi: [0,1] \rightarrow [0,1]$ be a $\Lambda$-measure-preserving bijection. We define $\vec{p}^\phi$ to be the single-edge strategy with coordinate functions 
\[p_j^{\phi}(u_1, \ldots, u_D) := p_j (\phi(u_1), \ldots, \phi(u_D))\]
It is clear that $\sum p_j^{\phi} \equiv 1$ still so it is a valid single-edge strategy. Furthermore, since $\phi$ is measure-preserving with respect to $\Lambda$, then by a change of variables in the integral formula for $f_{\vec{p}}$ we get 
\[ f_{\vec{p}^{\phi}} = f_{\vec{p}} \circ \phi \]
\end{definition}

The following lemma collects  some basic facts about single-edge strategies.

\begin{lemma}\label{lemma3}
Let $\vec{p}$ be a single-edge strategy.
\begin{enumerate}[label=(\roman*)]
\item For any Borel set $A \in \mathcal{B}([0,1])$, 
\[\Lambda(A)^D \leq \sigma_{\vec{p}}(A) \leq 1 - (1-\Lambda(A))^D    \]
In particular, $X(\vec{p}^{MAX})$ stochastically dominates $X(\vec{p})$ and  $\sigma_{\vec{p}} \ll \Lambda, \Lambda \ll \sigma_{\vec{p}}$.
\item We have
\[ \sup_{\phi} E[\sigma_{\vec{p}^{\phi}}] \leq E[\sigma_{MAX}]\]
where the supremum is taken over $\Lambda$-measure-preserving bijections $\phi: [0,1] \rightarrow [0,1]$.
\item Convex combinations of single-edge strategies $\vec{p}$ translate to convex combinations of $\sigma_{\vec{p}}, f_{\vec{p}}, F_{\vec{p}}$.
\end{enumerate}
\end{lemma}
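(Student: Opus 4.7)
The plan is to prove all three parts by directly unpacking the integral formula \eqref{cdf} for $F_{\vec{p}}$ together with the constraint $\sum_j p_j \equiv 1$, and then extracting the remaining claims as quick corollaries.

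For part (i) I would start from
\[ \sigma_{\vec{p}}(A) = \int_{[0,1]^D} \sum_{j=1}^D p_j(u_1,\ldots,u_D)\,\mathbf{1}_A(u_j)\,d\Lambda^D(u_1,\ldots,u_D). \]
For the lower bound, restrict the domain of integration to the cube $A^D$: there every indicator equals $1$, so the inner sum collapses to $\sum_j p_j \equiv 1$, giving $\sigma_{\vec{p}}(A) \ge \Lambda(A)^D$. For the upper bound, observe that on $(A^c)^D$ every indicator vanishes, so the integrand is identically zero; hence $\sigma_{\vec{p}}(A) \le \Lambda([0,1]^D \setminus (A^c)^D) = 1 - (1-\Lambda(A))^D$. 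Taking $A = [0,y]$, the lower bound becomes $F_{\vec{p}}(y) \ge y^D = F_{MAX}(y)$, which is exactly first-order stochastic dominance of $X(\vec{p}^{MAX})$ over $X(\vec{p})$. The absolute continuity $\sigma_{\vec{p}} \ll \Lambda$ is read directly off the density formula \eqref{density}, while $\Lambda \ll \sigma_{\vec{p}}$ follows from the lower bound: whenever $\Lambda(A) > 0$ we have $\sigma_{\vec{p}}(A) \ge \Lambda(A)^D > 0$.

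Part (ii) is then essentially a corollary. For any $\Lambda$-measure-preserving bijection $\phi$, the scrambled vector $\vec{p}^{\phi}$ is itself a valid single-edge strategy (its coordinates are nonnegative and sum to $1$ pointwise). Applying part (i) to $\vec{p}^{\phi}$ yields stochastic dominance of $\sigma_{MAX}$ over $\sigma_{\vec{p}^{\phi}}$, and the layer-cake identity $E[\sigma] = \int_0^1 (1-F_\sigma(y))\,dy$ on $[0,1]$ converts this into $E[\sigma_{\vec{p}^{\phi}}] \le E[\sigma_{MAX}]$. Since this bound is uniform in $\phi$, it passes to the supremum.

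Part (iii) is immediate from the linearity of both \eqref{cdf} and \eqref{density} in the coordinates $p_j$: substituting $\alpha \vec{p}^{(1)} + (1-\alpha)\vec{p}^{(2)}$ into either formula produces the corresponding convex combinations of the CDFs, densities, and laws, and the combination is manifestly still a valid single-edge strategy.

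There is no real conceptual obstacle here; the one spot that requires care is orienting the inequalities in part (i) correctly, in particular choosing $A = [0,y]$ (not $[y,1]$) so that the lower bound on $\sigma_{\vec{p}}(A)$ becomes dominance of CDFs in the direction needed for $X(\vec{p}^{MAX})$ to dominate $X(\vec{p})$. Once that orientation is pinned down, parts (ii) and (iii) fall out in a line each.
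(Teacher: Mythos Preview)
Your proof is correct and matches the paper's own argument essentially line for line: both obtain (i) by observing that if all $D$ samples lie in $A$ (resp.\ $A^c$) the choice must as well, then read off stochastic dominance via $F_{\vec{p}}(y)\ge y^D=F_{MAX}(y)$; both deduce (ii) by applying (i) to $\vec{p}^{\phi}$ and integrating the tail; and both treat (iii) as pure linearity. Your treatment is in fact slightly more thorough than the paper's, since you explicitly justify both directions of absolute continuity, which the paper leaves implicit.
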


\begin{remark}
It is not clear whether the supremum in (ii) is achieved, but this is beyond the scope of this paper.
\end{remark}

\begin{proof}
(i) If all $D$ edge labels $u_1, \ldots, u_D$ are in the set $A$, then so must be the edge label chosen from among them. Therefore $\Lambda(A)^D \leq \sigma_{\vec{p}}(A)$. The other inequality follows by replacing $A$ with $A^C$.

Recalling that $F_{MAX}(y) = y^D$, we get 
\[F_{MAX}(y) \leq F_{\vec{p}}(y) \ \forall y\]

(ii) For any $\Lambda$-measure-preserving bijection $\phi$, we apply the tail integral formula for expectation and use (i) to get
\[E[\sigma_{\vec{p}^{\phi}}] = \int_{0}^{1} 1-F_{\vec{p}^{\phi}}(t) dt  \leq \int_{0}^{1} 1-F_{MAX}(t) dt = E[\sigma_{MAX}]\]

(iii) For single-edge strategies $\vec{p}, \vec{q}$ and $t \in [0,1]$, $t\vec{p} + (1-t)\vec{q}$ is itself a single-edge strategy with
\[ F_{t\vec{p} + (1-t)\vec{q}} = t F_{\vec{p}} + (1-t) F_{\vec{q}}, \  f_{t\vec{p} + (1-t)\vec{q}} = t f_{\vec{p}} + (1-t) f_{\vec{q}}, \ \mu_{t\vec{p} + (1-t)\vec{q}} = t \sigma_{\vec{p}} + (1-t) \sigma_{\vec{q}}\]

\end{proof}

\subsection{"Consistent" Single-Edge Strategies}
Before proceeding further, we explain why we can restrict ourselves to single-edge strategies $\vec{p}$ with some convenient symmetries that render $\vec{p}$ consistent.

We say a permutation $\iota \in S_D$ acts on a $D$-tuple $\vec{u}$ by applying $\iota$ to the indices:
\[ \iota(\vec{u}) = (u_{\iota(1)}, \ldots, u_{\iota(D)})\]

 \begin{lemma}\label{lemma4}
 Let $\vec{p}$ be any single-edge strategy. Then there exists another single-edge strategy $\vec{q}$ that gives rise to the same distribution $\sigma_{\vec{p}} = \sigma_{\vec{q}}$ s.t. $\forall x \in [0,1], 1 \leq i \leq D$
 \begin{equation}\label{eq17}
      f_{\vec{p}}(x) = f_{\vec{q}}(x) = D \int_{[0,1]^{D-1}} q_i(u_1, u_2, \ldots, x, \ldots, u_D) d\Lambda^{D-1}(u_1,\ldots, \widehat{u_i}, \ldots, u_D)  
 \end{equation}
 where the $x$ occurs at position $i$, s.t. 
  \begin{equation}\label{eq18}
q_1(\iota_i(\vec{u})) = q_2(\iota_{i-1}(\vec{u})) = \ldots = q_D(\iota_{i+1}(\vec{u})) \ \forall 1 \leq i \leq D \ \forall \vec{u} 
 \end{equation}
 where each $\iota_j$ is the cyclic shift
\[\iota_j(\vec{u}) = (u_j, u_{j+1}, \ldots, u_D, u_1,\ldots, u_{j-1})\]
and s.t.
  \begin{equation}\label{eq19}
q_i(\vec{u}) = q_i(\iota(\vec{u})) \ \forall 1 \leq i \leq D, \forall \vec{u}, \mbox{and} \ \forall \iota \in S_D \ \mbox{with} \ \iota(i) = i
 \end{equation}
 \end{lemma}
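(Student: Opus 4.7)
The plan is to construct $\vec{q}$ explicitly via a single $S_D$-symmetrization of $\vec{p}$ with a moving index. Specifically, define
\[
q_1(\vec{u}) \;:=\; \frac{1}{D!} \sum_{\rho \in S_D} p_{\rho^{-1}(1)}(\rho\vec{u}), \qquad q_k(\vec{u}) \;:=\; q_1(\iota_k \vec{u}).
\]
The choice $q_k = q_1 \circ \iota_k$ is exactly condition \eqref{eq18}: substituting $\vec{v} := \iota_{i-k+1}\vec{u}$ into \eqref{eq18} and computing $\iota_i\, \iota_{i-k+1}^{-1} = \iota_i\,\iota_{k-i+1} = \iota_k$ collapses the chain of equalities to this single identity. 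From this, \eqref{eq17} is immediate: a cyclic change of variables inside $\int q_i(u_1,\ldots,x,\ldots,u_D)\,d\Lambda^{D-1}$ (with $x$ in position $i$) reduces the integrand to $q_1(x,v_2,\ldots,v_D)$, so all $D$ terms in the density formula \eqref{density} are equal to $f_{\vec{q}}(x)/D$.

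I would then verify the two normalization properties. For $\sum_k q_k \equiv 1$, substitute $\rho' = \rho\iota_k$ in the double sum: one computes $\rho^{-1}(1) = (\rho')^{-1}(1) + k - 1 \pmod{D}$, so the inner sum over $k$ for each fixed $\rho'$ collapses to $\sum_j p_j(\rho'\vec{u}) = 1$. For $\sigma_{\vec{q}} = \sigma_{\vec{p}}$, the cyclic change $\vec{v} = \iota_k\vec{u}$ gives $\sigma_{\vec{q}}(A) = D\int q_1(\vec{v})\mathbf{1}_A(v_1)\,d\vec{v}$ (since $u_k = v_1$); the further change $\vec{w} = \rho\vec{v}$, combined with the observation that exactly $(D-1)!$ permutations $\rho$ satisfy $\rho^{-1}(1) = j$ for each $j$, recovers $\sum_j \int p_j(\vec{w})\mathbf{1}_A(w_j)\,d\vec{w} = \sigma_{\vec{p}}(A)$.

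For \eqref{eq19} I would first handle the case $i = 1$, namely that $q_1$ is invariant under every $\tau \in S_D$ with $\tau(1)=1$. The anti-action rule $\tau(\rho\vec{u}) = (\rho\tau)\vec{u}$ (immediate from the paper's convention $(\iota\vec{u})_j = u_{\iota(j)}$) together with the substitution $\rho' = \tau\rho$ gives $(\rho')^{-1}(1) = \rho^{-1}(\tau^{-1}(1)) = \rho^{-1}(1)$, so the defining sum for $q_1$ is unchanged. To upgrade to general $i$, I would write $\sigma\iota_k = \iota_k\sigma'$ with $\sigma' := \iota_k^{-1}\sigma\iota_k$; for $\sigma$ fixing $k$, a direct computation yields $\sigma'(1) = \iota_k^{-1}(\sigma(k)) = \iota_k^{-1}(k) = 1$, so $\sigma'$ fixes $1$. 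Then $q_k(\sigma\vec{u}) = q_1(\sigma'(\iota_k\vec{u})) = q_1(\iota_k\vec{u}) = q_k(\vec{u})$ by the $i=1$ case.

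The main obstacle is the bookkeeping around the anti-homomorphism $\rho(\sigma\vec{u}) = (\sigma\rho)\vec{u}$ built into the paper's convention: the cyclic shifts $\iota_k$ and the stabilizer permutations do not commute under this action, so the naive two-step recipes (symmetrize first and cyclify, or vice versa) preserve one required symmetry but break the other's normalization. The one-shot symmetrization with the moving index $p_{\rho^{-1}(1)}$ succeeds because the two reindexings $\rho \mapsto \tau\rho$ (needed for invariance) and $\rho \mapsto \rho\iota_k$ (needed for the sum-to-one) each interact cleanly with the index function $\rho^{-1}(1)$.
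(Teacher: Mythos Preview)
Your one-shot symmetrization is correct and in fact coincides with the paper's two-step construction: unwinding the paper's cyclic-then-stabilizer average via the bijection $(\iota,j)\mapsto \rho=\iota\,\iota_{i-j+1}$ (where $\iota$ fixes $i$) gives precisely $q_i(\vec{u}) = \frac{1}{D!}\sum_{\rho \in S_D} p_{\rho^{-1}(i)}(\rho\vec{u})$, which is your formula after the cyclic change of variables. One minor slip: in your verification of $\sum_k q_k \equiv 1$, the substitution consistent with the anti-action rule $\rho(\iota_k\vec{u}) = (\iota_k\rho)\vec{u}$ is $\rho' = \iota_k\rho$ (yielding $\rho^{-1}(1) = (\rho')^{-1}(k)$ and argument $\rho'\vec{u}$), not $\rho' = \rho\iota_k$; with your substitution the argument of $p$ is not $\rho'\vec{u}$, though the index still sweeps $\{1,\ldots,D\}$ as $k$ varies, so the conclusion is unaffected once the correct substitution is used.
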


\begin{remark}\label{RMK11}
The new single-edge strategy $\vec{q}$ is consistent across all permutations of a tuple $\vec{u}$. That is, given an unordered tuple $(u_1, \ldots, u_D)$ we can say that $\vec{q}$ chooses each $u_i$ with probability $t_i$. Then every $q_j(\iota(\vec{u}))$ for $\iota \in S_D$ picks out the probability $t_{\iota(j)}$ i.e. $q_j$ outputs the probability of choosing the $j$th entry in its input tuple.

Furthermore, both the density $f_{\vec{q}}$ and the entire single-edge strategy $\vec{q}$ are uniquely determined by any one of the $q_i$. That is, given a measurable function $q_i: [0,1]^D \rightarrow [0,1]$ whose integral over $[0,1]^D$ with respect to the product measure $\Lambda^{\times D}$ is $\frac1D$, which is invariant under permutations in $S_D$ fixing $i$, and which satisfies
\[ \sum_j q_i(\iota_j(\vec{u})) = 1 \ \forall \vec{u} \]
we can use cyclic shifts to define a valid corresponding single-edge strategy $\vec{q}$ (that satisfies $\sum q_j \equiv 1$ and $\int_{[0,1]^D} q_j \ d\Lambda^{\times D} = \frac1D$) and we can compute $f_{\vec{q}}$ directly from $q_i$.
\end{remark}

\begin{remark}
It is easy to check that $\vec{p}^{MAX}$ satisfies \eqref{eq17}-\eqref{eq19}. Furthermore, if $\vec{p}$ is a consistent single-edge strategy and $\phi:[0,1] \rightarrow [0,1]$ is a $\Lambda$-measure-preserving bijection then so is $\vec{p}^{\phi}$.
\end{remark}

 \begin{proof}
 The intuition is that we take the average of the original $p_i$ over the desired symmetries. We do this in two steps. For each $1 \leq i \leq D$ define
\[p_i'(\vec{u}) := \frac{p_1(\iota_i(\vec{u})) + p_2(\iota_{i-1}(\vec{u})) + \ldots + p_i(\iota_1(\vec{u}))+  \ldots p_D(\iota_{i+1}(\vec{u}))}{D}\]
\[q_i(\vec{u}) = \frac1{(D-1)!} \sum_{\iota \in S_{D}, \iota(i) = i} p_i'(u_{\iota(1)}, u_{\iota(2)} \ldots, u_{\iota(D)})\]
A straightforward computation shows that $\vec{q}$ satisfies the required properties \eqref{eq17}-\eqref{eq19}.
\end{proof}

For the rest of the paper we restrict ourselves to these consistent single-edge strategies, which will simplify our computations.

\subsection{Closure of \texorpdfstring{$\{\sigma_{\vec{p}}\}$}{mP}}
The last remaining part of Theorem \ref{limitPts} is to show that the set of distributions $\sigma_{\vec{p}}$ is weakly closed. 

%UNNECESSARY
%We first need a general fact about weak convergence of probability measures with uniformly bounded densities.

%\begin{lemma}\label{functional}
%Fix $d \in \bN$, $K \subseteq \bR^d$ compact and let $\xi_n, \xi, \sigma$ be probability measures on $K$ s.t. $\xi_n \ll \sigma,  \xi_n \Rightarrow \xi$ and s.t. the densities $f_n = \frac{d\xi_n}{d \sigma}$ are uniformly bounded: $|f_n| \leq L$. Then $\xi \ll \sigma$ and $|\frac{d\xi}{d\sigma}| \leq L$.
%\end{lemma}

%\begin{proof}
%This is a straightforward application of the Riesz Representation Theorem. 

%Define a linear operator on $C^{\infty}(K)$ by
%\[ T(g) = \int g d\xi = \lim_{n \rightarrow \infty} \int gf_n d\sigma\]
%Since the $f_n$ are uniformly bounded then we get a bound for the operator norm by using Holder's Inequality:
%\[ |T(g)| \leq \lim_{n\rightarrow \infty} ||g||_2 ||f_n||_2 \leq L ||g||_2 \]
%So we can extend $T$ to a linear functional on $L^2(K)$. By the Riesz Representation Theorem, $T(g)$ is given by integration against a density $f$ with respect to $d\sigma$:
%\[ T(g) = \int gd\xi = \int gf d\sigma \ \forall g\in L^2(K)\]
%Thus $\xi \ll \sigma$ with density $f$ satisfying $|f| \leq L$.
%\end{proof}

\begin{theorem}\label{closure}
$\{\sigma_{\vec{p}}:   \mbox{ single-edge strategies} \ \vec{p}\}$ is weakly closed
\end{theorem}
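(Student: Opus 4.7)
The plan is to use weak-$\ast$ compactness in $L^{\infty}$ to extract a limit strategy. Suppose we have a sequence $\sigma_{\vec{p}_n} \Rightarrow \sigma$ with each $\vec{p}_n$ a consistent single-edge strategy (we may assume consistency by Lemma~\ref{lemma4}). By Remark~\ref{RMK11}, each $\vec{p}_n$ is determined by the single function $p_1^n \in L^{\infty}([0,1]^D, \Lambda^{\times D})$, with $0 \leq p_1^n \leq 1$. Since the closed unit ball in $L^{\infty}([0,1]^D)$ is weak-$\ast$ compact and metrizable (as $L^1$ is separable), there is a subsequence $(n_k)$ and $p_1^{\ast} \in L^{\infty}([0,1]^D)$ with $p_1^{n_k} \to p_1^{\ast}$ weak-$\ast$, i.e.\ $\int h\, p_1^{n_k}\, d\Lambda^{\times D} \to \int h\, p_1^{\ast}\, d\Lambda^{\times D}$ for every $h \in L^1([0,1]^D)$.

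Next I would check that $p_1^{\ast}$ inherits the structural properties needed for Remark~\ref{RMK11}. Testing against indicators of positive/negative parts gives $0 \leq p_1^{\ast} \leq 1$ a.e.\ and $\int p_1^{\ast}\, d\Lambda^{\times D} = \frac{1}{D}$. Invariance of $p_1^{\ast}$ under permutations in $S_D$ fixing the first coordinate is preserved because each such permutation is a $\Lambda^{\times D}$-measure-preserving change of variables, and the weak-$\ast$ limit of permutation-invariant functions remains permutation-invariant (test against any $h$ and $h \circ \iota$). Finally, the identity $\sum_{j=1}^{D} p_1^{n_k}(\iota_j(\vec{u})) = 1$ passes to the limit by testing against arbitrary $h \in L^{1}([0,1]^D)$, yielding $\sum_{j=1}^{D} p_1^{\ast}(\iota_j(\vec{u})) = 1$ for $\Lambda^{\times D}$-a.a.\ $\vec{u}$. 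Then Remark~\ref{RMK11} lets me define a consistent single-edge strategy $\vec{q}$ from $q_1 := p_1^{\ast}$ via the cyclic-shift relation, with $\sum q_j \equiv 1$ automatically.

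The final step is to identify $\sigma = \sigma_{\vec{q}}$. For any Borel $A \subseteq [0,1]$, the formula \eqref{density} combined with consistency gives
\[
\int_A f_{\vec{p}_{n_k}}(y)\, dy \;=\; D \int_{A \times [0,1]^{D-1}} p_1^{n_k}(\vec{u})\, d\Lambda^{\times D}(\vec{u}),
\]
and since $\mathbf{1}_{A \times [0,1]^{D-1}} \in L^1([0,1]^D)$, weak-$\ast$ convergence forces this to tend to $\int_A f_{\vec{q}}(y)\, dy$. Taking $A = [0,y]$ gives $F_{\vec{p}_{n_k}}(y) \to F_{\vec{q}}(y)$ for every $y \in [0,1]$. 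On the other hand, weak convergence $\sigma_{\vec{p}_{n_k}} \Rightarrow \sigma$ gives $F_{\vec{p}_{n_k}}(y) \to F_{\sigma}(y)$ at every continuity point of $F_{\sigma}$, and (since the densities are uniformly bounded by $D$) both $F_{\vec{q}}$ and $F_{\sigma}$ are $D$-Lipschitz and hence continuous, so $F_{\sigma} \equiv F_{\vec{q}}$ and $\sigma = \sigma_{\vec{q}}$.

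The main technical obstacle is verifying that the relation $\sum_j p_1^{\ast}(\iota_j(\vec{u})) = 1$ and the $S_{D-1}$-invariance truly survive the weak-$\ast$ limit in the appropriate pointwise-a.e.\ sense rather than only in a distributional sense; this is handled by testing against indicators of arbitrary measurable sets and appealing to the measure-preserving character of the cyclic shifts and permutations. Everything else (ensuring $0 \leq p_1^{\ast} \leq 1$, assembling $\vec{q}$ from $q_1$, and passing from cdf convergence back to weak convergence of measures) is routine once this is in place.
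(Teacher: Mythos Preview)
Your proof is correct and follows essentially the same route as the paper's. The only cosmetic difference is the compactness framework: the paper passes to the probability measures $\nu_{\vec{p}^n}:=D p_1^n\,d\Lambda^{\times D}$ on $[0,1]^D$ and extracts a weak limit $\nu$ (then reads off a density $Dp_1$), whereas you stay in $L^{\infty}$ and use Banach--Alaoglu directly on the $p_1^n$. These are interchangeable here since all the densities are uniformly bounded. If anything, you are more explicit than the paper about why the limit $p_1^{\ast}$ retains the $S_{D-1}$-invariance and the cyclic-sum identity from Remark~\ref{RMK11}; the paper's proof simply asserts that the limit ``gives rise to a single-edge strategy'' without spelling this out. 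Your final identification of $\sigma$ via pointwise cdf convergence is also fine; the paper instead argues by uniqueness of weak limits of the marginal, which avoids having to observe that $F_\sigma$ is Lipschitz.
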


\begin{proof}
Suppose $\sigma_{\vec{p}^{n}} \Rightarrow \xi$ for some consistent single-edge strategies $\vec{p}^n$. We seek a single-edge strategy yielding the distribution $\xi$.

Consider any $n$. Let $\nu_{\vec{p}^n}$ be the distribution on $[0,1]^D$ given by integration against \\ $D p_1^n d\Lambda^{\times D}$. Observe that \eqref{eq17} implies
\[  f_{\vec{p}^n}(y) =  \int_{[0,1]^{D-1}} D p_1^n(y, u_2, \ldots, u_D) d\Lambda^{D-1}(u_2, \ldots, u_D)   \]
so $\sigma_{\vec{p}^n}$ is just the first coordinate marginal of $\nu_{\vec{p}^n}$.

%below - standard argument is using defn of weak converngece to give P(A_n) <= lim P(A_n) for open sets A_n hence bounded
Compactness yields a weakly convergent subsequence $\nu_{\vec{p}^{n_j}} \Rightarrow \nu$. By a standard argument, since $Dp_1^n$ are uniformly bounded by $D$, then $\nu \ll \Lambda^{\times D}$ and has a density of the form $Dp_1$ for some measurable function $p_1: [0,1]^D \rightarrow [0,1]$. This along with the fact that $\int_{[0,1]^D} p_1 d\Lambda^{\times D} = \frac1D$ are enough for $p_1$ to give rise to a single-edge strategy $\vec{p}$ by the first Remark following Lemma \ref{lemma4}. 

But then the corresponding first coordinate marginals must also converge weakly, so we get $\sigma_{\vec{p}^{n_j}} \Rightarrow \xi'$ where $\xi'$ is the corresponding marginal of $\nu$:
\[ \xi'(A) =\int_{A\times [0,1]^{D-1}} Dp_1 d\Lambda^D = \sigma_{\vec{p}}(A) \ \forall A \in \mathcal{B}([0,1])\]
Thus $\xi' = \sigma_{\vec{p}}$. On the other hand, $\sigma_{\vec{p}^n} \Rightarrow \xi$ so by uniqueness of weak limits, $\xi = \xi' = \sigma_{\vec{p}}$.
\end{proof}

Therefore the set of possible distributions $\{\sigma_{\vec{p}}\}$ of $X(\vec{p})$ over single-edge strategies $\vec{p}$ almost surely coincides with the set of limit points of the empirical measures $\frac1n \mu_{0 \rightarrow n}(\chi)$ over all strategies $\chi$. In particular these have the same extreme points.

\section{Extreme Points of \texorpdfstring{$\{\sigma_{\vec{p}}\}$}{mp}}\label{extreme}
Our goal in this section is to characterize the extreme points of the possible distributions of $X$ we can observe as we vary the underlying  single-edge strategy. It turns out that the extreme points of $\{\sigma_{\vec{p}}:  \mbox{ single-edge strategies} \ \vec{p}\}$ are precisely those $\sigma_{\vec{p}}$ which have scramblings with mean converging to the mean of $\sigma_{MAX}$, or, equivalently, those $\sigma_{\vec{p}}$ with deterministic $\vec{p}$.

 \begin{theorem}\label{thm4}
Let $\vec{p}$ be a single-edge strategy.  The following are equivalent:
\begin{enumerate}[label=(\roman*)]
\item $\sigma_{\vec{p}}$ is an extreme point
\item Any consistent single-edge strategy achieving $\sigma_{\vec{p}}$  must be deterministic
\item $f_{\vec{p}}$ is not constant on sets of positive $\Lambda$-measure and $\sigma_{\vec{p}}$ is given by the following single-edge strategy $\vec{q}$:
\[ q_k(u_1,\ldots, u_D) = \textbf{1}_{\{f_{\vec{p}}(u_k) \geq f_{\vec{p}}(u_i) \ \forall i\}} \ \mbox{for $\Lambda^{\times D}$-a.a.} \ (u_1,\ldots, u_D) \]
In other words, $\sigma_{\vec{p}}$ is achieved by  the deterministic single-edge strategy "choose \\ whichever label yields a higher value when evaluating the density $f_{\vec{p}}$"
\item $\sup_{\phi} E[\sigma_{\vec{p}^{\phi}}] = E[\sigma_{MAX}]$
\item For $U \sim [0,1]$ if $f_{\vec{p}}(U), f_{MAX}(U)$ as $[0,D]$-valued random variables  on the probability space $([0,1], \mathcal{B}([0,1]), \Lambda)$, then they have the same distribution. That is,
\[P(f_{\vec{p}} \leq x) = P(f_{MAX} \leq x) \ \forall x\]
\end{enumerate}
\end{theorem}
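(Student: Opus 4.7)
My plan is to establish the cycle (v) $\Rightarrow$ (iii) $\Rightarrow$ (i) $\Rightarrow$ (ii) $\Rightarrow$ (v) along with (iv) $\Leftrightarrow$ (v), which together give all equivalences.

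First, I verify (iii) $\Leftrightarrow$ (v) by direct computation. Plugging the greedy strategy from (iii) into \eqref{density} and using symmetry gives $f_{\vec{q}}(y) = D\, G(f_{\vec{p}}(y))^{D-1}$, where $G$ is the cdf of the $[0,D]$-valued random variable $f_{\vec{p}}(U)$ for $U \sim \text{Unif}[0,1]$ (the no-flat-parts condition on $f_{\vec{p}}$ makes ties occur with probability zero). Since $f_{MAX}(U) = D U^{D-1}$ has cdf $G_{MAX}(z) = (z/D)^{1/(D-1)}$ on $[0,D]$, condition (v) is equivalent to $D\, G(f_{\vec{p}}(y))^{D-1} = f_{\vec{p}}(y)$ for a.e.~$y$, which is precisely the statement $f_{\vec{q}} = f_{\vec{p}}$ (equivalently $\sigma_{\vec{q}} = \sigma_{\vec{p}}$) appearing in (iii). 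For (iv) $\Leftrightarrow$ (v), change of variables yields $E[\sigma_{\vec{p}^{\phi}}] = \int_0^1 u\, f_{\vec{p}}(\phi(u))\, du$; Hardy--Littlewood's rearrangement inequality bounds this by $\int_0^1 u\, f_{\vec{p}}^*(u)\, du$, with $f_{\vec{p}}^*$ the increasing rearrangement, achievable via Theorem \ref{thmNishiura} applied to matching level sets. Integration by parts plus Lemma \ref{lemma3}(i) applied to the strategy realizing the rearrangement give $F_{\vec{p}}^*(t) \geq t^D = F_{MAX}(t)$, so equality $\sup_{\phi} E[\sigma_{\vec{p}^\phi}] = E[\sigma_{MAX}]$ holds iff $f_{\vec{p}}^* = f_{MAX}$ a.e., iff (v).

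For (iii) $\Rightarrow$ (i), set $\tau := f_{\vec{p}}$; for any consistent strategy $\vec{r}$, $\int \tau\, d\sigma_{\vec{r}} = E\bigl[\sum_j r_j(\vec{U})\tau(U^j)\bigr] \leq E[\max_j \tau(U^j)]$ with equality iff $\vec{r}$ picks the $\tau$-argmax coordinate a.s.; since (iii) forces $f_{\vec{p}}(U)$ to be atomless, this argmax is unique a.s., so the unique maximizer in $\sigma$-space is the greedy strategy's distribution, which by (iii) equals $\sigma_{\vec{p}}$, making it an exposed and hence extreme point. For (v) $\Rightarrow$ (ii), by (v) the densities $f_{\vec{p}}$ and $f_{MAX}$ have equal value distributions under $\Lambda$, so there exists a $\Lambda$-preserving bijection $\phi$ with $f_{\vec{p}} \circ \phi = f_{MAX}$ a.e.~(built from Theorem \ref{thmNishiura} applied to matching level sets of the two densities); then any consistent $\vec{q}$ achieving $\sigma_{\vec{p}}$ yields $\vec{q}^\phi$ consistent with density $f_{MAX}$, so $E[\sigma_{\vec{q}^\phi}] = E[\max_j U^j]$, and comparing with $E[\sigma_{\vec{q}^\phi}] = \int \sum_j q^\phi_j(\vec{u})\, u_j\, d\vec{u} \leq \int \max_j u_j\, d\vec{u}$ forces $\vec{q}^\phi = \vec{p}^{MAX}$ a.e., making $\vec{q}$ deterministic. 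Finally (ii) $\Rightarrow$ (i): any decomposition $\sigma_{\vec{p}} = \tfrac{1}{2}(\sigma_{\vec{q}_1} + \sigma_{\vec{q}_2})$ gives $\vec{q} := \tfrac{1}{2}(\vec{q}_1 + \vec{q}_2)$ achieving $\sigma_{\vec{p}}$, deterministic by (ii); as $\vec{q}$ is $\{0,1\}$-valued and $\vec{q}_1, \vec{q}_2 \in [0,1]$, we get $\vec{q}_1 = \vec{q}_2 = \vec{q}$ a.e.

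The main obstacle is (i) $\Rightarrow$ (ii). My plan is a perturbation argument by contradiction: given a non-deterministic consistent $\vec{q}$ achieving $\sigma_{\vec{p}}$, identify a positive-measure subset of $[0,1]^D$ where $q_1 \in (\delta, 1-\delta)$ for some $\delta > 0$, and intersect with its cyclic orbit (using $\sum_j q_1 \circ \iota_j = 1$ to guarantee richness of the cyclic-invariant hull). On this region I would construct $h_1$ symmetric in $u_2, \ldots, u_D$, satisfying the linearized cyclic-sum constraint $\sum_j h_1(\iota_j(\vec{u})) = 0$, and with nonzero marginal $\int_{[0,1]^{D-1}} h_1(y, u_2, \ldots, u_D)\, du_2 \cdots du_D$, using an antisymmetric template like $u_1 - \tfrac{1}{D-1}(u_2+\cdots+u_D)$ multiplied by an appropriate cyclically invariant cutoff. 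For small $\epsilon > 0$ the pair $\vec{q} \pm \epsilon \vec{h}$ yields valid consistent strategies with differing $\sigma$'s, contradicting the extremality of $\sigma_{\vec{p}} = \tfrac{1}{2}(\sigma_{\vec{q}+\epsilon\vec{h}} + \sigma_{\vec{q}-\epsilon\vec{h}})$. The technical crux is arranging the support and symmetries so that the marginal remains nonzero after averaging over cyclic shifts and the $S_{D-1}$-symmetric closure; the cyclic-sum-$=1$ structure of consistent strategies is essential for ensuring the perturbation region is nontrivial.
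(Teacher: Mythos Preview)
Your implications $(\mathrm{iii})\Leftrightarrow(\mathrm{v})$, $(\mathrm{iv})\Leftrightarrow(\mathrm{v})$ via rearrangement, $(\mathrm{iii})\Rightarrow(\mathrm{i})$ via the exposed-point argument, and $(\mathrm{i})\Leftrightarrow(\mathrm{ii})$ are all correct and in several places genuinely slicker than the paper's route. In particular, the exposed-point proof of $(\mathrm{iii})\Rightarrow(\mathrm{i})$ with $\tau=f_{\vec p}$ is a one-line replacement for the paper's more laborious $(\mathrm{iv})\Rightarrow(\mathrm{i})$ argument, and your Hardy--Littlewood treatment of $(\mathrm{iv})\Leftrightarrow(\mathrm{v})$ (once you note $F^{*}_{\vec p}(t)=\inf_{\Lambda(A)=t}\sigma_{\vec p}(A)\ge t^{D}$ directly from Lemma~\ref{lemma3}(i), without needing a realizing strategy) is cleaner than the paper's Lemmas~\ref{lemma9}--\ref{lemma10}.

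However, the cycle is not closed. Your plan promises $(\mathrm{ii})\Rightarrow(\mathrm{v})$, but the paragraph you wrote argues $(\mathrm{v})\Rightarrow(\mathrm{ii})$: it assumes the equal value-distribution in order to build $\phi$. With what you have, you get $(\mathrm{iii})\Leftrightarrow(\mathrm{iv})\Leftrightarrow(\mathrm{v})\Rightarrow(\mathrm{i})\Leftrightarrow(\mathrm{ii})$, but no arrow back from $(\mathrm{i})$ or $(\mathrm{ii})$ to any of $(\mathrm{iii}),(\mathrm{iv}),(\mathrm{v})$. That missing reverse direction is precisely the hard part of the theorem, and it is where the paper invests almost all of its effort: the long ``weight tuple'' analysis of Lemma~\ref{lemma8} shows combinatorially that an extreme $\sigma_{\vec p}$ forces the greedy-in-$f_{\vec p}$ structure of $(\mathrm{iii})$. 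Your perturbation sketch for $(\mathrm{i})\Rightarrow(\mathrm{ii})$ is fine (it matches the paper's Lemma~\ref{lemma6}), but it does not touch this gap.

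Separately, the $(\mathrm{v})\Rightarrow(\mathrm{ii})$ argument you did write has its own flaw: an exact $\Lambda$-preserving bijection $\phi$ with $f_{\vec p}\circ\phi=f_{MAX}$ a.e.\ need not exist. Take $D\ge 2$ and $f_{\vec p}(u)=D(2u)^{D-1}$ on $[0,\tfrac12]$, $f_{\vec p}(u)=D(2-2u)^{D-1}$ on $[\tfrac12,1]$. This has the same value distribution as $f_{MAX}$, and it is the density of the greedy strategy for itself, so it is a legitimate $\sigma_{\vec p}$. But $f_{\vec p}\circ\phi=f_{MAX}$ forces $\phi(u)\in\{u/2,\,1-u/2\}$ for a.e.\ $u$; measure-preservation then requires the set $\{u:\phi(u)=u/2\}$ to meet every interval in exactly half its measure, which contradicts Lebesgue density. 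The paper sidesteps this by using only approximate $\phi_n$ throughout (Lemma~\ref{Lemma10} and the proof of $(\mathrm{iv})\Rightarrow(\mathrm{v})$), never an exact one.
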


\begin{remark}
%Although we have a formula for $f_{MAX}$  in terms of $\Lambda$ from before, there is no general formula for the distribution of $f_{MAX}$. However, in some cases it can be computed easily. For example, when $\Lambda = \mbox{Unif}[0,1]$, we get $f_{MAX}(y) = Dy^{D-1}$ on $[0,1]$ so 
Since $f_{MAX}(y) = Dy^{D-1}$ on $[0,1]$ then $\frac1D f_{MAX} \sim \mbox{Beta}(D,1)$. Thus (v) implies that for $U\sim \mbox{Unif}[0,1]$, $f_{\vec{p}}(U)$ has a continuous probability distribution. 
\end{remark}

We prove this theorem over the next couple of sections.

\subsection{Deterministic Single-Edge Strategies }

We begin by characterizing the extreme points of the set of distributions $\{\sigma_{\vec{p}}\}$ in terms of deterministic single-edge strategies.

 \begin{lemma}\label{lemma6}
(i) $\Leftrightarrow$ (ii) in Theorem \ref{thm4}
\end{lemma}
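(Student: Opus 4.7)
The plan is to handle the two directions separately: (ii) $\Rightarrow$ (i) follows directly from linearity, while (i) $\Rightarrow$ (ii) requires constructing an explicit perturbation that splits $\sigma_{\vec{p}}$ nontrivially.

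For (ii) $\Rightarrow$ (i), I would suppose $\sigma_{\vec{p}} = t\sigma_{\vec{r}} + (1-t)\sigma_{\vec{s}}$ for some $t \in (0,1)$ and single-edge strategies $\vec{r}, \vec{s}$. Lemma \ref{lemma4} lets me replace $\vec{r}, \vec{s}$ by consistent strategies inducing the same distributions. Because the consistency conditions \eqref{eq18}--\eqref{eq19} are linear in the coordinate functions, the convex combination $\vec{q} := t\vec{r} + (1-t)\vec{s}$ is itself consistent, and by Lemma \ref{lemma3}(iii) it satisfies $\sigma_{\vec{q}} = \sigma_{\vec{p}}$. Hypothesis (ii) then forces $\vec{q}$ to be deterministic; but a proper convex combination (with $t \in (0,1)$) of $[0,1]$-valued functions is $\{0,1\}$-valued only when all summands agree a.e., so $\vec{r} = \vec{s} = \vec{q}$ a.e., hence $\sigma_{\vec{r}} = \sigma_{\vec{s}} = \sigma_{\vec{p}}$.

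For (i) $\Rightarrow$ (ii) I argue the contrapositive: given a consistent non-deterministic $\vec{q}$ with $\sigma_{\vec{q}} = \sigma_{\vec{p}}$, I decompose $\sigma_{\vec{p}}$ nontrivially. An elementary argument using $\sum_j q_j = 1$ produces distinct indices $k, k'$ and $\delta > 0$ such that $E := \{u \in [0,1]^D : q_k(u), q_{k'}(u) \in (\delta, 1-\delta)\}$ has positive $\Lambda^{\times D}$-measure. Since the hyperplane $\{u_k = u_{k'}\}$ has $\Lambda^{\times D}$-measure zero, almost every point of $E$ is a Lebesgue density point satisfying $u_k \neq u_{k'}$. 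Fix one such $u^0$, pick $\epsilon < \tfrac12 |u_k^0 - u_{k'}^0|$, set $B := E \cap B_{\epsilon}(u^0)$ (positive measure by the density-point property), and for $\eta \in (0, \delta)$ define
\[ q_j^\pm := q_j \pm \eta \big(\mathbf{1}_{\{j=k\}} - \mathbf{1}_{\{j=k'\}}\big) \mathbf{1}_B. \]
The mass adjustment cancels in the coordinate sum and $\eta < \delta$ keeps all values in $[0,1]$, so $\vec{q}^\pm$ are valid (though not necessarily consistent) single-edge strategies with $\vec{q} = \tfrac12(\vec{q}^+ + \vec{q}^-)$. From \eqref{density},
\[ f_{\vec{q}^+}(y) - f_{\vec{q}^-}(y) = 2\eta \big(g_k(y) - g_{k'}(y)\big), \]
where $g_j(y) := \int_{[0,1]^{D-1}} \mathbf{1}_B(u_1, \ldots, u_{j-1}, y, u_{j+1}, \ldots, u_D) \, d\Lambda^{D-1}$ is the $j$-th coordinate slice function of $B$. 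The localization of $B$ forces $g_k$ to be supported in $(u_k^0 - \epsilon, u_k^0 + \epsilon)$ and $g_{k'}$ in the disjoint interval $(u_{k'}^0 - \epsilon, u_{k'}^0 + \epsilon)$, yet each integrates to $\Lambda^{\times D}(B) > 0$; so $g_k \not\equiv g_{k'}$ and $\sigma_{\vec{q}^+} \neq \sigma_{\vec{q}^-}$, exhibiting $\sigma_{\vec{p}}$ as a nontrivial convex combination and contradicting (i).

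The main obstacle is the last step: the linear map $\vec{q} \mapsto \sigma_{\vec{q}}$ can have a large kernel, so a naive mass-shuffling perturbation might leave $\sigma_{\vec{q}}$ unchanged. The density-point construction with distinct $u_k^0$ and $u_{k'}^0$ is precisely what breaks the potential symmetry between the two slice functions and guarantees that the perturbation escapes this kernel.
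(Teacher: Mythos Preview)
Your proof is correct. Both directions are argued via contrapositive, as in the paper, and your (ii) $\Rightarrow$ (i) direction is essentially identical to the paper's.

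For the harder direction, not (ii) $\Rightarrow$ not (i), you take a genuinely simpler route than the paper. The paper insists on keeping the perturbed strategies $\vec{q}, \vec{r}$ \emph{consistent}: it restricts to a set $A''$ localized in a product of disjoint intervals, then symmetrizes the perturbation $\mathbf{1}_{A''} - \mathbf{1}_{\iota_k^{-1}(A'')}$ over all permutations in $S_D$ fixing $1$, and checks the resulting $q_1, r_1$ still satisfy the cyclic-shift identity $\sum_i q_1(\iota_i(\vec{u})) = 1$ needed to generate a consistent strategy via Remark~\ref{RMK11}. You bypass all of this by observing that the perturbed strategies $\vec{q}^\pm$ need not be consistent---any single-edge strategy lands in $\{\sigma_{\vec{p}}\}$, so a nontrivial splitting by arbitrary strategies already contradicts extremality. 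Your Lebesgue-density-point localization with $u_k^0 \neq u_{k'}^0$ plays the same role as the paper's product-of-disjoint-intervals box: it forces the two coordinate slice functions $g_k, g_{k'}$ to have disjoint supports, which is exactly what guarantees the perturbation is not in the kernel of $\vec{q} \mapsto \sigma_{\vec{q}}$. Your argument is shorter and more transparent; the paper's buys nothing extra for this lemma, though staying in the consistent class is thematically aligned with the rest of Section~5.
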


\begin{proof}  We prove both contrapositives. 

First, consider a distribution $\sigma_{\vec{p}}$ achieved by a non-deterministic consistent single-edge strategy $\vec{p}$, i.e. a single-edge strategy satisfying \eqref{eq17}-\eqref{eq19}.  We will construct a set of positive $\Lambda^{\times D}$ measure on which we  perturb $p_1$ in a way that allows us to write $\vec{p}$ as a non-trivial average of two single-edge strategies $\vec{q}, \vec{r}$.

By our assumption, there exists $0 < \epsilon < \frac12$ s.t. the set
\[A = \{\vec{u}: p_1(\vec{u}) \in (\epsilon, 1-\epsilon), p_1(\vec{u}) \geq p_1(\iota_i(\vec{u})) \ \forall 1 \leq i \leq D\} \ \mbox{has positive $\Lambda^{\times D}$ measure}\]
where $\iota_i$ are the cyclic shifts as before. Then there are $a_1 < b_1, a_2 < b_2$ s.t. $[a_1, b_1], [a_2, b_2]$ are disjoint and 
\[A' = A \cap [a_1, b_1] \times [a_2, b_2]^{D-1} \ \mbox{has positive $\Lambda^{\times D}$ measure}\]
The purpose of this is to ensure that the first coordinate in a tuple in $A'$ cannot appear in any other position in another tuple in $A'$.

By \eqref{eq18}, the fact that $p_1(\vec{u}) < 1-\epsilon$ on $A'$, and the definition of single-edge strategies,
\[\sum_{k=2}^D p_1(\iota_k(\vec{u})) = \sum_{k=2}^D p_k(\vec{u}) = 1-p_1(\vec{u}) > \epsilon \ \forall \vec{u} \in A'\]
Hence for any $\vec{u} \in A'$, there exists $2 \leq k \leq D$ s.t. $p_1(\iota_k(\vec{u})) > \frac{\epsilon}D$. It follows that there is $2 \leq k \leq D$ s.t.
\[A'' = \bigg\{\vec{u} \in A': p_1(\iota_k(\vec{u})) \in \bigg(\frac{\epsilon}D, 1-\epsilon\bigg)\bigg\} \ \mbox{has positive $\Lambda^{\times D}$ measure}\]
Define
\[q_1 := p_1 + \frac{\epsilon}{D!} \sum_{\iota \in S_D, \iota(1)= 1} (  \mathbf{1}_{\iota(A'')} - \mathbf{1}_{\iota(\iota_k^{-1}(A''))}), r_1 := p_1 + \frac{\epsilon}{D!} \sum_{\iota \in S_D, \iota(1)= 1} ( - \mathbf{1}_{\iota(A'')} + \mathbf{1}_{\iota(\iota_k^{-1}(A''))})\]
It is clear that $q_1, r_1$ average to $p_1$, and are both invariant under permutations in $S_D$ fixing 1. Since $p_1(\vec{u}) \in (\frac{\epsilon}D, 1-\epsilon) \ \forall \vec{u} \in \iota(A'') \cup \iota(\iota_k^{-1}(A'')) \ \forall \iota \in S_D, \iota(1) = 1$ then both $q_1, r_1$ have ranges in $[0,1]$. Also, note that 
\[\int_{[0,1]^D}  \mathbf{1}_{\iota(B)} d\Lambda^{\times D} = \Lambda^{\times D}(B) \ \forall B \in \mathcal{B}([0,1]^D), \iota \in S_D \]
so it follows that 
\[\int_{[0,1]^D} q_1 d\Lambda^{\times D} = \int_{[0,1]^D} r_1 d\Lambda^{\times D} = \int_{[0,1]^D} p_1 d\Lambda^{\times D} = \frac1D \]
Finally, note that by a simple counting argument, for any $\vec{u}$,
\[\sum_i \sum_{\iota \in S_D, \iota(1)= 1}  \mathbf{1}_{\iota(A'')}(\iota_i(\vec{u})) = \sum_{\iota \in S_D} \mathbf{1}_{\iota(A'')}(\vec{u}) = \sum_i \sum_{\iota \in S_D, \iota(1)= 1}  \mathbf{1}_{\iota(\iota_k^{-1}(A''))}(\iota_i(\vec{u}))  \]
so
\[\sum_i q_1(\iota_i(\vec{u})) = \sum_i r_1(\iota_i(\vec{u})) = \sum_i  p_1(\iota_i(\vec{u})) = 1 \ \forall \vec{u}\]
Thus $q_1, r_1$ are valid probability functions which average to $p_1$. By the remark after Lemma \ref{lemma4}, these uniquely determine single-edge strategies $\vec{q}, \vec{r}$ whose average is $\vec{p}$. Thus $f_{\vec{p}}$ is the average of $f_{\vec{q}}, f_{\vec{r}}$. It remains to show this convex combination is non-trivial.

 Consider any $\vec{u} = (u_1, u_2,\ldots, u_D)$ with $u_1 \in [a_1, b_1]$. For any $\iota \in S_D, \iota(1) = 1$,
 \[\iota^{-1}(k) \neq 1 \Rightarrow u_{\iota^{-1}(k)} \in [a_2, b_2] \Rightarrow u_{\iota^{-1}(k)} \notin [a_1, b_1] \Rightarrow \vec{u} \notin \iota(\iota_k^{-1}(A''))\]
It follows that 
\[  q_1(\vec{u}) = p_1(\vec{u}) + \frac{\epsilon}{D!} \sum_{\iota \in S_D, \iota(1)= 1}  \mathbf{1}_{\iota(A'')}(\vec{u}) \geq p_1(\vec{u}) + \frac{\epsilon}{D!} \mathbf{1}_{A''}(\vec{u})\]
so by \eqref{eq17},
\[ f_{\vec{q}}(u_1) \geq f_{\vec{p}}(u_1) + \frac{\epsilon}{(D-1)!}\int_{[0,1]^{D-1}} \mathbf{1}_{A''} (u_1, u_2, \ldots, u_D) \ d\Lambda^{\times (D-1)}(u_2,\ldots, u_D) \]
Since $\Lambda^{\times D} (A'') > 0$ then there is $\delta > 0$ and a Borel set $B \subset [a_1, b_1]$ with $\Lambda(B)> 0$ s.t. the integral above is $\geq \delta$ for $u_1 \in B$. It follows that $f_{\vec{q}} \geq f_{\vec{p}} + \frac{\epsilon}{(D-1)!}\delta$ on $B$ so the convex combination is non-trivial. Therefore $\sigma_{\vec{p}}$ is not an extreme point.

Now suppose $\sigma_{\vec{p}}$ is not an extreme point, say $\sigma_{\vec{p}} = t\sigma_{\vec{q}} + (1-t) \sigma_{\vec{r}}$ for some consistent single-edge strategies $\vec{q},  \vec{r}$ with $\sigma_{\vec{q}} \neq \sigma_{\vec{r}}$. Then $\sigma_{\vec{p}}$ is achieved by the single-edge strategy $t \vec{q} + (1-t) \vec{r}$, which trivially satisfies \eqref{eq17}-\eqref{eq19} so is consistent. Since $\vec{q},  \vec{r}$ have range in $[0,1]$ and differ on some set of positive $\Lambda^{\times D}$-measure then on this same set $t \vec{q} + (1-t) \vec{r}$ is not $\{0,1\}$-valued. Thus $\sigma_{\vec{p}}$ is achieved by a non-deterministic single-edge strategy satisfying \eqref{eq17}-\eqref{eq19}.
\end{proof}

\subsection{"Weight Tuples" of Single-Edge Strategies}

%\subsection{(i) \texorpdfstring{$\Rightarrow$}{=>} (iii) \texorpdfstring{$\Rightarrow$}{=>} (iv)}

$\sigma_{\vec{p}}$ being an extreme point will give us further properties relating to $(D+1)$-tuples \\ $(u_1, \ldots, u_{D+1})$, but these properties hold a.s. and we must be extra careful about which tuples living in measure 0 sets we are excluding. We need some setup to address this issue.

 Let $\vec{p}$ be a consistent single-edge strategy s.t. $\sigma_{\vec{p}}$ is an extreme point. By Lemma \ref{lemma6}, $p_1$ is 0-1 valued $\Lambda^{\times D}$-a.e.. Also, the set of tuples in $[0,1]^D$ with two or more duplicates has  $\Lambda^{\times D}$-measure 0. Define
 \[ S := \{\mbox{ordered} \ (u_1,\ldots,u_D) \in \bR^D \ \mbox{ duplicate-free s.t.} \ p_1(u_1,\ldots,u_D) \in \{0,1\}\} \]
 to be the set of "good" ordered $D$-tuples  we consider. Note that $\Lambda^D(S)= 1$. Once we prove more a.s. properties of the elements of $S$, we will amend this definition of $S$ accordingly. 

Combining the fact that $p_1$ is 0-1 valued on $S$ and the remark after Lemma \ref{lemma4}, we see that $p_1$ is equivalent to a choice function taking in \emph{unordered} tuples $(u_1, \ldots, u_D)$ and outputting one of the coordinates $u_j$; then for any $\iota \in S_D$, $p_1(\iota(\vec{u})) = \mathbf{1}_{\{j = \iota(1)\}}$ i.e. $p_1$ returns whether or not the first coordinate of the input ordered tuple is the choice associated with the corresponding unordered tuple.

Let 
\[
  T = \left\{  \begin{array}{l}
    \mbox{ordered} \ (u_1,\ldots,u_{D+1}) \in [0,1]^{D+1} \ \mbox{ s.t.} \\
    \mbox{all $(D+1)!$ ordered subtuples of size $D$ are in $S$} 
  \end{array}\right\}
\]
It is clear that $\Lambda^{\times (D+1)}(T) = 1$ and the tuples in $T$ contain no duplicate entries. $T$ is the set of "good" ordered $(D+1)$-tuples we restrict ourselves to.

For ordered $(D+1)$-tuples in $T$, we wish to study the possible sets of choices we can make for the $(D+1)!$ ordered subtuples of size $D$. Since the choice for some $\vec{u} \in [0,1]^D$ is also the choice for $\iota(\vec{u})$ for all $\iota \in S_D$, then rather than keeping a factor of $D!$ everywhere, we instead consider the possible sets of choices we can make for the $D+1$ \emph{unordered} subtuples of size $D$. We can encode these choices as an ordered "weight tuple" $(w_1, \ldots, w_{D+1})$ where $w_j$ is the number of times $u_j$ is the choice made. In terms of the consistent single-edge strategy $\vec{p}$,
\begin{equation}\label{equation4} 
w_j = w_j(u_1,\ldots, u_{D+1}) := \sum_{1 \leq k \leq D+1, k \neq j} p_1(u_j,u_1,\ldots, \widehat{u_j},\ldots,\widehat{u_k}, \ldots, u_{D+1})
\end{equation}
For example, the weight tuple $(D, 1, 0, \ldots, 0)$ corresponds to the choices
\[ (u_1,\ldots, \widehat{u_k}, \ldots, u_{D+1}) \mapsto u_1 \ \forall 2 \leq k \leq D+1 \ \mbox{and} \ (u_2,\ldots, u_{D+1})\mapsto u_2\]
 Note that each $u_j$ appears in exactly $D$ of the $D+1$ $D$-subtuples and there is exactly one choice for each subtuple so
\begin{equation}\label{eq20}
    w_i \in \{0, 1,\ldots, D\} \ \forall 1 \leq i \leq D+1, \sum w_i \equiv D+1 
\end{equation}
 Even though a weight tuple may correspond to more than one set of choices for the $D+1$ $D$-subtuples, it is easily checked that every valid weight tuple (satisfying \eqref{eq20}) corresponds to at least one set of choices.

The following flow chart summarizes this process:
\begin{center}
\begin{tikzpicture}
\node[draw] at (0,100pt) {Start with ordered $(D+1)$-tuple $(u_1,\ldots,u_{D+1}) \in T$};
\node[draw, align=left] at (0,50pt) {Consider the choice for each unordered $D$-subtuple $(u_1,\ldots, \widehat{u_k}, \ldots, u_{D+1})$. \\ Count occurrences of each $u_j$ among these choices.};
\node[draw] at (0,0pt) {Get an ordered $(D+1$)-weight tuple $(w_1, \ldots, w_{D+1})$ satisfying \eqref{eq20}};
\draw [->] (0,90pt) to (0,68pt);
\draw [->] (0,33pt) to (0,10pt);
\end{tikzpicture}
\end{center}

One last observation we make about the weight tuples is that they can be used to compute the density $f_{\vec{p}}$ directly. Fix any $1 \leq j \leq D+1$. Observe that \eqref{eq17} with a change of variables gives
\begin{equation*} 
\begin{split} f_{\vec{p}}(t) &= D \int_{[0,1]^{D-1}} p_1(t,u_2, \ldots, u_D) d\Lambda^{D-1}(u_2, \ldots, u_D) \\
&=  \sum_{1 \leq k \leq D+1, k \neq j} \int_{[0,1]^{D-1}}  p_1(t,u_1,\ldots, \widehat{u_j},\ldots,\widehat{u_k}, \ldots, u_{D+1}) \\
&\qquad \qquad \qquad d\Lambda^{D-1}(u_1, \ldots,  \widehat{u_j}, \ldots,\widehat{u_k}, \ldots, u_{D+1}) 
\end{split}
\end{equation*}
But $\Lambda$ is a probability measure so we can integrate everything against $du_k$ without changing the value:
\begin{equation} \label{eqn6} 
\begin{split} 
f_{\vec{p}}(t) &=   \sum_{1 \leq k \leq D+1, k \neq j} \int_{[0,1]^{D}}  p_1(t,u_1,\ldots, \widehat{u_j},\ldots,\widehat{u_k}, \ldots, u_{D+1}) d\Lambda^{D}(u_1, \ldots,  \widehat{u_j}, \ldots, u_{D+1}) \\
&=\int_{[0,1]^{D}} w_j(u_1,\ldots, t, \ldots, u_{D+1} ) d\Lambda^{D}(u_1, \ldots,  \widehat{u_j}, \ldots, u_{D+1}) \ \mbox{by  \eqref{equation4}}
\end{split}
\end{equation}
where $t$ occurs in position $j$. Note that the above formula holds for any $t$ for which
\[\{(u_1,\ldots, u_{j-1}, u_{j+1},\ldots, u_{D+1}) | (u_1,\ldots, u_{j-1},t,u_{j+1},\ldots, u_{D+1}) \in T\} \ \mbox{has} \ \Lambda^{\times D}-\mbox{measure 1}\]
i.e. it holds for $t$ in a set of $\Lambda$-measure 1.

\subsection{"Weight Tuples" of the Extreme Points}

 We claim that  $\sigma_{\vec{p}}$ being an extreme point implies that $\Lambda^{\times (D+1)}$-almost all weight tuples must be a permutation of $(D,1,0,\ldots, 0)$. That means that almost all $(D+1)$-tuples in $T$ have one coordinate that "dominates" the others in terms of the choice function, and this behaviour will imply precisely that $f_{\vec{p}}$ can be approximated by "scrambles" of $f_{MAX}$.

\begin{lemma}
Suppose $\vec{p}$ is a consistent, single-edge strategy s.t. $\sigma_{\vec{p}}$ is an extreme point. For $\Lambda^{\times (D+1)}$-a.a. ordered tuples in $T$, the corresponding weight tuple is a permutation of $(D,1,0,\ldots,0)$.
\end{lemma}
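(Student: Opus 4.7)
The plan is to argue by contradiction: suppose
\[ B := \{(u_1,\ldots,u_{D+1}) \in T : \vec w(u_1,\ldots,u_{D+1}) \text{ is not a permutation of } (D,1,0,\ldots,0)\} \]
has positive $\Lambda^{\times(D+1)}$-measure. From $B$ I will build a non-deterministic consistent single-edge strategy $\vec{q}$ satisfying $\sigma_{\vec q} = \sigma_{\vec p}$, which contradicts (ii) of Lemma \ref{lemma6} applied to the extreme point $\sigma_{\vec p}$.

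First I would reduce to a uniform bad weight tuple. Since the constraints \eqref{eq20} leave only finitely many admissible weight tuples, some fixed $\vec{w}^* = (w_1^*, \ldots, w_{D+1}^*)$ with $\max w_j^* \leq D-1$ occurs on a subset $B' \subseteq B$ of positive $\Lambda^{\times(D+1)}$-measure. From $\sum w_j^* = D+1$ and $\max \leq D-1$, at least two indices $i_1 \neq i_2$ satisfy $w_{i_1}^*, w_{i_2}^* \geq 1$. By pre-symmetrizing $B'$ under the $S_{D+1}$-action on coordinates, I may further assume $B'$ is coordinate-permutation invariant, which will let the forthcoming perturbation automatically respect the consistency relations \eqref{eq18}-\eqref{eq19}.

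The key step is a choice swap. For a generic $(u_1, \ldots, u_{D+1}) \in B'$ I would select two $D$-subtuples $S_1, S_2$ each containing both $u_{i_1}$ and $u_{i_2}$ such that $\vec p$ picks $u_{i_1}$ from $S_1$ and $u_{i_2}$ from $S_2$; such a pair exists because $w_{i_1}^*, w_{i_2}^* \geq 1$ while $D-1 \geq 1$ of the $D$ subtuples containing $u_{i_1}$ also contain $u_{i_2}$, and symmetrically. Exchanging the two choices produces an alternative deterministic assignment on $\{u_1,\ldots,u_{D+1}\}$ with the same weight tuple $\vec{w}^*$. Promoting this local swap to a globally defined perturbation $\delta p_1$ of the original choice function, small enough that $p_1 + \delta p_1 \in [0,1]$, yields a new consistent strategy $\vec{q}$ with $q_1 = p_1 + \delta p_1$ that is non-deterministic (since $\delta p_1 \ne 0$ on a set of positive measure). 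The weight-tuple preservation of the swap, together with \eqref{eqn6} (which expresses $f_{\vec p}$ directly in terms of weight tuples), would guarantee $f_{\vec q} = f_{\vec p}$ and hence $\sigma_{\vec q} = \sigma_{\vec p}$, contradicting Lemma \ref{lemma6}.

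The main obstacle is the globalization step. A single $D$-subtuple $\vec v$ sits inside many $(D+1)$-tuples, each potentially in $B'$ and each prescribing its own local swap, and one must coordinate these prescriptions into a single well-defined $\delta p_1$ that obeys the $S_D$-invariance \eqref{eq19} and the cyclic-shift compatibility \eqref{eq18} while also keeping $p_1 + \delta p_1 \in [0,1]$ and preserving the weight tuples across all $(D+1)$-extensions of affected subtuples, not just those in $B'$. The permutation-invariance arranged in the reduction step, together with a careful sign choice tied to the relative ordering of $u_{i_1}, u_{i_2}$ inside $\vec v$, should make this patching possible, but the combinatorial bookkeeping is the technical heart of the argument.
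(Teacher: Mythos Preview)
Your overall framework—contradict extremality by exhibiting a non-deterministic consistent strategy with the same law, via Lemma~\ref{lemma6}—matches the paper. The gap is in the specific perturbation you propose.

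The central problem is your inference ``weight-tuple preservation of the swap $\Rightarrow f_{\vec q}=f_{\vec p}$ via \eqref{eqn6}.'' Formula \eqref{eqn6} expresses $f_{\vec p}(t)$ as an integral of $w_j$ over \emph{all} $(D+1)$-tuples with $j$-th coordinate $t$, not just those in $B'$. Your swap changes the choice on a $D$-subtuple $S_1$, and $S_1$ sits inside a continuum of $(D+1)$-extensions (vary the missing coordinate), most of which lie outside $B'$; on each such extension the weight tuple genuinely shifts by $\pm 1$ at positions $i_1,i_2$. So the swap preserves weight tuples on $B'$ but perturbs them on a positive-measure set outside $B'$, and there is no reason the $S_1$- and $S_2$-contributions to $\delta f(x)$ cancel: writing it out, one gets a difference of two projections $\Lambda^{D-1}\!\big(\pi_{j_2}(B'|_x)\big)-\Lambda^{D-1}\!\big(\pi_{j_1}(B'|_x)\big)$, which is generically nonzero. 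Your last paragraph correctly identifies that one must preserve weight tuples across \emph{all} extensions, but the proposed fix (symmetrization plus a sign rule) does not achieve this. A secondary issue: for $D=2$ there is only \emph{one} $D$-subtuple containing both $u_{i_1}$ and $u_{i_2}$, so the pair $S_1,S_2$ you need simply does not exist; and even for $D\ge 3$, the existence of $S_1,S_2$ with the prescribed choices can fail for certain configurations (e.g.\ $\vec w^*=(1,1,1,1)$ with $i_1,i_2$ chosen unluckily). Also note that pre-symmetrizing $B'$ under $S_{D+1}$ is incompatible with fixing a single weight tuple $\vec w^*$ unless $\vec w^*$ is itself constant.

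The paper avoids all of this by \emph{not} trying to preserve weight tuples. It performs a one-sided modification on a \emph{single} subtuple (changing one choice from $u_K$ to $u_I$), producing a strategy $\vec q$ with weight tuple shifted by $(+1,\ldots,-1)$ on the relevant set, and the reverse modification producing $\vec r$; then $\vec p=\tfrac12(\vec q+\vec r)$ exactly, and one only needs $\sigma_{\vec r}\neq\sigma_{\vec p}$, which follows because $f_{\vec r}(u_K)$ strictly increases on a set of positive measure. The point is that writing $\sigma_{\vec p}$ as a nontrivial convex combination requires showing the densities \emph{differ}, which is far easier than showing they \emph{agree}. If you want to rescue your argument, drop the swap and instead do two opposite one-sided moves.
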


\begin{proof}
We sketch the details of the proof in the case $D \geq 3$. The case $D = 2$ is similar.

Suppose the claim is false, say the set $U$ of ordered $(D+1)$-tuples $(u_1,\ldots, u_{D+1}) \in T$ whose weight tuple $(w_1,\ldots, w_{D+1})$ is not a permutation of $(D,1,0,\ldots,0)$ has positive $\Lambda^{\times (D+1)}$-measure. Note that $U$ is clearly invariant under permutations in $S_{D+1}$. 

We follow a similar approach as in the proof of Lemma \ref{lemma6}, in that we seek write $\vec{p}$ as a convex combination of two new single-edge strategies $\vec{q}, \vec{r}$ obtained by perturbing $\vec{p}$.

First, we further restrict $U$. There exist $a_i < b_i$ s.t. $[a_i, b_i]$ are pairwise disjoint and s.t.
 \[U' := U \cap [a_1, b_1] \times \cdots \times [a_{D+1}, b_{D+1}] \ \mbox{has positive $\Lambda^{\times (D+1)}$ measure}  \]
 In this way, each coordinate in a tuple in $U'$ cannot  appear in any other position in another tuple in $U'$. 

Now consider any $(u_1, \ldots, u_{D+1}) \in U'$ so it has a maximal weight $1 \leq w_K \leq D-1$ in its weight tuple. If $w_K \geq 2$ then either there exists $i \neq K$ s.t. $w_i \geq 2$ or there are at least two $i \neq K$ s.t. $w_i = 1$ (this is because $\sum w_m = D+1$ and $w_K \leq D-1$). In either case, we can pick $I \neq K$ so that the choice for $(u_1, \ldots, \widehat{u_K}, \ldots, u_{D+1})$ is not $u_I$, hence there is another $D$-subtuple containing $u_K$ for which the choice is $u_I$. Furthermore, since $w_K \geq 2$, then there also is a $D$-subtuple containing $u_I$ for which the choice is $u_K$. On the other hand, if $w_K = 1$ then $w_m = 1 \ \forall m$. If $(u_{m_1}, \ldots, u_{m_D})$ is the $D$-subtuple for which the choice is $u_K$ then of the remaining $D-1 \geq 2$ coordinates $u_{m_{n}}$ with $m_n \neq K$  at least one, call it $u_I$, is the choice for a $D$-subtuple other than $(u_1, \ldots, \widehat{u_K}, \ldots, u_{D+1})$, i.e. for a $D$-subtuple containing $u_K$. 
 
 Since this holds for all $\vec{u} \in U'$ then there exist distinct  $1 \leq I, J, K, L \leq D+1$  s.t.
 \[  U'' := \left\{ \vec{u} \in U'\ \middle\vert \begin{array}{l}
    1 \leq w_I, \ \mbox{and} \ w_m \leq w_K \ \forall m, \ \mbox{and} \\
    \mbox{ the choice for the $D$-subtuple $(u_1,\ldots, \widehat{u_J}, \ldots, u_{D+1})$ is $u_I$, and}  \\
    \mbox{ the choice for the $D$-subtuple $(u_1,\ldots, \widehat{u_L}, \ldots, u_{D+1})$ is $u_K$} 
  \end{array}\right\}
 \]
 has positive $\Lambda^{\times (D+1)}$ measure. 
 
 The idea is that we write  the weight tuples for $(D+1)$-tuples in $U''$ as averages of two different weight tuples, and use these new weight tuples to obtain two new non-trivial, valid single-edge strategies whose densities average to $f_{\vec{p}}$.
 
 Consider any $\vec{u} \in U''$. We can write the corresponding weight tuple $(w_1, \ldots, w_{D+1})$ as an average of two other weight tuples:
 \[(\ldots, w_I, \ldots, w_K, \ldots) = \frac12 ( \ldots, w_I+1, \ldots, w_K-1, \ldots) + \frac12 (\ldots, w_I-1, \ldots, w_K+1, \ldots) \]
 The weight tuple $( \ldots, w_I+1, \ldots, w_K-1, \ldots)$ can be achieved simply by changing the choice of $(u_1, \ldots, \widehat{u_L}, \ldots, u_{D+1})$ from $u_K$ to $u_I$, whereas the weight tuple $( \ldots, w_I-1, \ldots, w_K+1, \ldots)$ can be achieved  by changing the choice of $(u_1, \ldots, \widehat{u_J}, \ldots, u_{D+1})$ from $u_I$ to $u_K$. It is trivial to see that these two new tuples are also valid weight tuples that satisfy \eqref{eq20} so they correspond to new single-edge strategies $\vec{q}, \vec{r}$ respectively. By \eqref{eqn6},  we see that $f_{\vec{p}}(y)$ is the average of $f_{\vec{q}}(y), f_{\vec{r}}(y)$.  
 
 The only difference in the $D=2$ case is that the new weight tuples are achieved by making two changes to the choice function rather than one.
 
 It remains to check that this average is non-trivial. Let 
 \[V := \{u_K: \exists u_1, \ldots, \widehat{u_K}, \ldots, u_{D+1} \ \mbox{s.t.} \ (u_1,\ldots, u_{D+1}) \in U''\} \]
and for $x \in [0,1]$ let 
\[W^{x} := \{(u_1, \ldots, \widehat{u_K}, \ldots, u_{D+1}): \ (u_1,\ldots, , u_{K-1}, x, u_{K+1}, \ldots, u_{D+1}) \in U''\} \]
 Then $\Lambda^{(D+1) \times}(U'') > 0$ implies $\Lambda(V) > 0$ and for $\Lambda$-a.a. $u_K \in V$, $\Lambda^{\times D} (W^{u_K}) > 0$. 
 
 Let us consider how the function $w_K(\cdot, \ldots, \cdot, u_K, \cdot, \ldots, \cdot): [0,1]^D \rightarrow \{0,1,\ldots, D\}$ changes from $\vec{p}$ to $\vec{r}$ for any given fixed $u_K \in V$. On $W^{u_K}$, $w_K(\cdot, \ldots, \cdot, u_K, \cdot, \ldots, \cdot)$ increases by 1 by construction of $\vec{r}$. Note that $u_K$ can only appear in the $K$th coordinate of a tuple in $U''$ (by construction of $U''$) hence $w_K(\cdot, \ldots, \cdot, u_K, \cdot, \ldots, \cdot)$ either remains unchanged or increases by 1 off of $W^{u_K}$ (because the only way it decreases by 1 is if $u_K$ had appeared at index $I$ in a tuple in $U''$). Thus, for any $u_K \in V$,
 \[f_{\vec{r}}(u_K) \geq f_{\vec{p}}(u_K) +  \Lambda^{\times D}(W^{u_K})   \]
 by \eqref{eqn6}. It follows that $\sigma_{\vec{r}} \neq \sigma_{\vec{p}}$ so this is indeed a non-trivial convex combination. This contradicts the assumption that $\sigma_{\vec{p}}$ was an extreme point. Therefore $\Lambda^{\times (D+1)}$-a.a. weight tuples must be a permutation of $(D, 1,0,\ldots, 0)$.
 \end{proof}

  We amend our definition of "good" $(D+1)$-tuples:
  \[T' := \{(u_1,\ldots, u_{D+1}) \in T: (w_1,\ldots, w_{D+1}) \ \mbox{is a perm. of } \ (D,1,0,\ldots,0)\}\]
  We still have $\Lambda^{\times (D+1)}(T') = 1$ (provided of course that $\sigma_{\vec{p}}$ is an extreme point).
  
  We now prove the (i) $\Rightarrow$ (ii) direction of Theorem \ref{thm4}. We split the proof into several claims.
  
 \begin{lemma}\label{lemma8}
 Let $\sigma_{\vec{p}}$ be an extreme point.\\
 (i) Suppose $(u_1, \ldots, u_{D+1}) \in T'$ and $u_i$ has weight $D$ in $(u_1, \ldots, u_{D+1})$.  Then for any $1 \leq j \leq D+1$, $j \neq i$ and any $u_j'$ s.t. $(u_1, \ldots, u_{j-1}, u_j', u_{j+1}, \ldots, u_{D+1}) \in T'$, either $u_i$ or $u_j'$ has weight $D$ in $(u_1, \ldots, u_{j-1}, u_j', u_{j+1}, \ldots, u_{D+1})$.

 (ii) Suppose $(u_1, u_1', u_2, \ldots, u_D) \in T'$. Then in the corresponding weight tuple, $u_1'$ has weight $D$ and $u_1$ has weight 0 if and only if
 \[p_1(u_1, u_2, u_3, \ldots, u_D) < p_1(u_1', u_2, u_3, \ldots, u_D) \]

(iii) For $\Lambda^{\times 2}$-a.a. $(u_1, u_1')$, if 
 \[p_1(u_1, u_2, u_3, \ldots, u_D) < p_1(u_1', u_2, u_3, \ldots, u_D) \]
on a set of positive $\Lambda^{\times (D-1)}$ measure then 
 \[p_1(u_1, u_2, u_3, \ldots, u_D) \leq  p_1(u_1', u_2, u_3, \ldots, u_D) \]
 for $\Lambda^{\times (D-1)}$-a.a. $(u_2, \ldots, u_D)$. 
 
(iv) For $\Lambda^{\times 2}$-a.a. $(u_1, u_1')$, if $f_{\vec{p}}(u_1) \leq f_{\vec{p}}(u_1')$ then 
 \[p_1(u_1, u_2, u_3, \ldots, u_D) \leq  p_1(u_1', u_2, u_3, \ldots, u_D) \]
 for $\Lambda^{\times (D-1)}$-a.a. $(u_2, \ldots, u_D)$.

(v) $f_{\vec{p}}$ is not constant on any set of positive $\Lambda$ measure.

(vi) For $\Lambda^{\times D}$-a.a. $(u_1, \ldots, u_D)$,
\[ p_1(u_1,\ldots, u_D) \geq \mathbf{1}_{\{f_{\vec{p}}(u_1) \geq f_{\vec{p}}(u_i) \ \forall i\}} \]

(vii) For $\Lambda^{\times D}$-a.a. $(u_1, \ldots, u_D)$,
\[ p_1(u_1,\ldots, u_D) = \mathbf{1}_{\{f_{\vec{p}}(u_1) \geq f_{\vec{p}}(u_i) \ \forall i\}} \]

 \end{lemma}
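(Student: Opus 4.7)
The plan is to prove the seven parts in sequence, each building on the previous. Parts (i) and (ii) are direct weight-tuple bookkeeping. For (i), the key observation is that the $D$-subtuple $(u_1,\ldots,\widehat{u_j},\ldots,u_{D+1})$ is unchanged under the replacement $u_j \mapsto u_j'$ and therefore retains its choice $u_i$; if the weight-$D$ element of the new $(D+1)$-tuple were some $u_k \notin \{u_i,u_j'\}$, then $u_k$ would have to be the choice of every $D$-subtuple containing it, including this unchanged one, contradicting that its choice is $u_i$. Part (ii) uses that $p_1(u_1,u_2,\ldots,u_D) \in \{0,1\}$ records exactly whether $u_1$ is the choice for the subtuple removing $u_1'$; strict inequality pins down the choices of the two pivot subtuples, and the permutation-of-$(D,1,0,\ldots,0)$ structure together with the reasoning of (i) forces $u_1'$ to occupy the weight-$D$ slot and $u_1$ the weight-$0$ slot.

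Parts (iii) and (iv) promote these pointwise statements to integrated ones. For (iii), assume both $A = \{(u_2,\ldots,u_D): p_1(u_1,\cdot) < p_1(u_1',\cdot)\}$ and $B = \{(v_2,\ldots,v_D): p_1(u_1,\cdot) > p_1(u_1',\cdot)\}$ have positive $\Lambda^{\times(D-1)}$-measure; a Fubini argument picks representatives in $A$ and $B$ so that every hybrid $(u_1,u_1',v_2,\ldots,v_k,u_{k+1},\ldots,u_D)$ lies in $T'$. Starting from $(u_1,u_1',u_2,\ldots,u_D)$ (where $u_1'$ has weight $D$ by part (ii)) and iteratively swapping $u_k \mapsto v_k$, part (i) keeps the weight-$D$ element in $\{u_1',v_2,\ldots,v_k\}$ at every step, so $u_1$ can never appear as weight-$D$ in the final tuple, contradicting what (ii) would demand from $p_1(u_1,v_2,\ldots,v_D) > p_1(u_1',v_2,\ldots,v_D)$. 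Part (iv) is the contrapositive plus integration: if the strict opposite inequality held on a positive-measure set despite $f_{\vec{p}}(u_1) \leq f_{\vec{p}}(u_1')$, then (iii) gives the opposite direction a.e., integrating forces equality of the $f_{\vec{p}}$ values, and then a.e. equality of $p_1(u_1,\cdot)$ and $p_1(u_1',\cdot)$ contradicts the positive-measure strictness.

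Part (v) is the main obstacle. If $f_{\vec{p}} \equiv c$ on $E$ with $\Lambda(E) > 0$, part (iv) makes $p_1(u,\cdot)$ independent of $u \in E$ a.e., yielding a $\{0,1\}$-valued symmetric function $\hat{g}$ on unordered $(D-1)$-tuples of elements of $E$. The consistency $\sum_j p_j \equiv 1$ restricted to tuples in $E^D$ becomes a rigid combinatorial condition: for a.a.~$D$-subset $S \subset E$, exactly one $(D-1)$-subset $T \subset S$ lies in $\mathcal{T} := \hat{g}^{-1}(1)$. Fix an $N$-point subset $W \subset E$ for which (by Fubini in $E^N$) every one of its $\binom{N}{D}$ $D$-subsets satisfies this condition; double-counting the pairs $(T,S)$ with $T \in \mathcal{T}$, $T \subset S \subset W$, $|S|=D$, yields $|\mathcal{T} \cap \binom{W}{D-1}| = \binom{N}{D-1}/D$. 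Since for every $D$ one can choose $N$ making $\binom{N}{D-1}/D$ non-integer (e.g.~$N = D+1$ handles all odd $D$ and all even $D$, with analogous choices for the remaining cases), this is the contradiction. As a consequence, level sets of $f_{\vec{p}}$ are null and a.a.~$D$-tuple has a unique $f_{\vec{p}}$-maximizer.

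For (vi) and (vii), given a generic $(u_1,\ldots,u_D)$ with $f_{\vec{p}}(u_1)$ its unique maximum, append a dummy $u_0$ with $f_{\vec{p}}(u_0)$ below all $f_{\vec{p}}(u_i)$ (possible by (v)) to form a $(D+1)$-tuple in $T'$ and examine its weight tuple. Parts (ii) and (iv) directly rule out any weight-$D$ element being paired with a weight-$0$ element of strictly higher $f_{\vec{p}}$. The residual configuration (weight-$D$ is some $u_j \in \{u_2,\ldots,u_D\}$ and $u_1$ sits in the weight-$1$ slot) forces the equality $p_1(u_j,\textrm{slice}) = p_1(u_1,\textrm{slice})$ at the specific $(D-1)$-slice of the $(D+1)$-tuple; part (iv) confines such equality to a null set, so varying $u_0$ over its positive-measure range and applying Fubini shows the residual case survives on only a null set of augmentations. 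Hence a.a.~$u_0$ forces weight-$D = u_1$, the subtuple removing $u_0$ has choice $u_1$, and $p_1(u_1,\ldots,u_D)=1$, giving (vi). Part (vii) follows immediately: applying (vi) via cyclic-shift consistency pins $p_k(\vec{u}) = 1$ exactly at the $f_{\vec{p}}$-maximizing index, and $\sum_k p_k \equiv 1$ with $p_k \in \{0,1\}$ forces the remaining $p_k$ to vanish.
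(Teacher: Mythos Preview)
Your arguments for (i)--(iv) are correct and essentially coincide with the paper's.

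For (v) you take a genuinely different route. The paper argues directly: pick a single $(D+1)$-tuple $(u_1,\ldots,u_{D+1})$ with all entries in the level set $V$; its weight tuple is a permutation of $(D,1,0,\ldots,0)$, so there exist $u_i$ of weight $D$ and $u_j$ of weight $0$, yet (iv) forces $p_1(u_i,\cdot)=p_1(u_j,\cdot)$ at the relevant slice, giving $1=0$. Your double-counting argument is valid in principle, but your justification of the key arithmetic fact is wrong: for $N=D+1$ one gets $\binom{D+1}{D-1}/D=\tfrac{D+1}{2}$, which \emph{is} an integer whenever $D$ is odd. The underlying claim (that some $N\ge D$ makes $\binom{N}{D-1}/D$ non-integral) is true---e.g.\ via Kummer's theorem one can pick $N$ so that adding $D-1$ and $N-D+1$ in base $p$ (for $p\mid D$) incurs no carries---but this needs to be said. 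The paper's one-tuple argument is both shorter and avoids this detour.

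The real gap is in (vi). Your treatment of the ``residual configuration'' (weight-$D$ is some $u_j\in\{u_2,\ldots,u_D\}$ and $u_1$ has weight $1$) does not go through. In that configuration both $p_1(u_1,\text{slice})$ and $p_1(u_j,\text{slice})$ equal $1$ at the slice $(u_0,u_2,\ldots,\widehat{u_j},\ldots,u_D)$, and you assert that (iv) confines such equality to a null set. But (iv) only yields $p_1(u_j,\cdot)\le p_1(u_1,\cdot)$ a.e.\ when $f_{\vec p}(u_j)<f_{\vec p}(u_1)$; equality on a positive-measure set of slices is perfectly compatible with that, so long as strict inequality also holds on a positive-measure set (which is all that the strict $f_{\vec p}$-ordering forces). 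Worse, the choice for the subtuple $(u_1,\ldots,u_D)$---the very thing you are trying to determine---is $u_j$ in this configuration and does not depend on $u_0$ at all, so varying $u_0$ cannot flip it. Hence your argument never reaches a contradiction on the bad set. The paper closes this gap differently: it appends a point $u_{D+1}$ with $f_{\vec p}(u_{D+1})\le f_{\vec p}(u_1)$, deduces that the appended tuple forces $p_1(u_i,\ldots,u_{D+1},\ldots)=1$ for the weight-$D$ index $i$, and then uses a Fubini swap $u_i\leftrightarrow u_{D+1}$ on a positive-measure set to produce the simultaneous equalities $p_1(u_{D+1},u_2,\ldots,u_D)=0$ and $p_1(u_{D+1},u_2,\ldots,u_D)=1$. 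Some such swap or exchange device is needed here; your current argument for (vi) is incomplete.
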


 \begin{remark}
We give heuristics to aid in understanding these claims:\\
(i) If $u_i$ is the dominant choice in $(u_1,\ldots, u_{D+1})$ then $u_i$ can never be dominated by one of the $u_j$ for $i \neq j$ in tuples containing $u_i$ and $u_j$.\\
(ii) Dominance can be determined by evaluating $p_1$.\\
(iii) If $u_1'$ dominates $u_1$ once then $u_1'$ almost always dominate $u_1$.\\
(iv) The ordering imposed by domination coincides with the ordering imposed by the values of $f_{\vec{p}}$.\\
(vii) $u_1$ is the choice in $(u_1,\ldots,u_D)$ if and only of $u_1$ maximizes the value of $f_{\vec{p}}$.
\end{remark}
 
\begin{proof}
 (i) Let $1 \leq i \leq D$ be the index for which $u_i$ has weight $D$ in $(u_1,\ldots, u_{D+1})$. Recall that this means we have the choices 
 \[ (u_1,\ldots, \widehat{u_{\ell}},\ldots, u_{D+1}) \mapsto u_i \ \forall \ell \neq i\]
Taking $\ell = j$, this means $u_i$ has weight 1 or $D$ in $(u_1,\ldots, u_{j-1}, u_j', u_{j+1},\ldots,u_{D+1})$ (because this $(D+1)$-tuple is in $T'$). If it is $D$, we are done. If it is 1, the fact that we have the choice 
 \[(u_1,\ldots, u_{j-1}, u_{j+1},\ldots, u_{D+1}) \mapsto u_i \]
 implies no $u_k, k \neq j$ can have weight $D$ so $u_j'$ must have weight $D$ in \\ $(u_1,\ldots, u_{j-1}, u_j', u_{j+1},\ldots,u_{D+1})$.
 
(ii) Recall that $p_1$ is 0-1 valued on $D$-subtuples of tuples in $T'$. We have the following sequence of if-and-only-if statements:
 \begin{align*}
     & p_1(u_1, u_2, u_3, \ldots, u_D) < p_1(u_1', u_2, u_3, \ldots, u_D) \\
     &\Leftrightarrow p_1(u_1, u_2, u_3, \ldots, u_D) = 0, p_1(u_1', u_2, u_3, \ldots, u_D) = 1  \\
     & \Leftrightarrow (u_1, u_2, u_3, \ldots, u_D) \mapsto u_i \ \mbox{for some} \ 2 \leq i \leq D \ \mbox{and} \  (u_1', u_2, u_3, \ldots, u_D) \mapsto u_1'\\
 \end{align*}
Now if $u_1'$ has weight $D$ and $u_1$ has weight 0 in $(u_1, u_1',u_2,\ldots, u_D$), then we must have
\begin{equation} \label{equation6}
 (u_1, u_2, u_3, \ldots, u_D) \mapsto u_i \ \mbox{for some} \ 2 \leq i \leq D \ \mbox{and} \  (u_1', u_2, u_3, \ldots, u_D) \mapsto u_1'
\end{equation}

 On the other hand, if we know \eqref{equation6} then the weight of $u_i$ cannot be $D$ since the choice for $(u_1', u_2, \ldots, u_i, \ldots, u_D)$ is $u_1'$. Thus $u_i$ has weight 1, so $u_1'$ has weight $D$ and $u_1$ has weight 0.

(iii) Consider $u_1, u_1'$ s.t. $(u_1, u_1', u_2,\ldots, u_D) \in T'$ for $\Lambda^{\times (D-1)}$-a.a. $(u_2, \ldots, u_D)$. This clearly holds for $\Lambda^{\times 2}$-a.a. $(u_1, u_1')$ with $u_1 \neq u_1'$. Let
\[U = \{(u_2, \ldots, u_D): (u_1, u_1', u_2,\ldots, u_D) \in T', p_1(u_1, u_2, u_3, \ldots, u_D) < p_1(u_1', u_2, u_3, \ldots, u_D) \}\]
so $\Lambda^{\times (D-1)}(U) > 0$ by assumption. 

Consider any $(u_2, \ldots, u_D) \in U$ and $u_2'$ s.t. $(u_1, u_1', u_2', u_3, \ldots, u_D) \in T'$. By (ii), $u_1'$ has weight $D$ and $u_1$ has weight 0 in the weight tuple for $(u_1, u_1', u_2, u_3, \ldots, u_D)$. By (i), it follows that either $u_1'$ or $u_2'$ has  weight $D$ in $(u_1, u_1', u_2', u_3, \ldots, u_D)$.
By the contrapositive of (ii),
  \[p_1(u_1, u_2', u_3, \ldots, u_D) \leq p_1(u_1', u_2', u_3, \ldots, u_D) \]
  Note that for $\Lambda^{\times 2}$-a.a. $(u_1, u_1')$ it is true that for $\Lambda^{\times (D-1)}(U)>0$ of $(u_2, \ldots, u_D) \in U$ and $\Lambda$-a.a. $u_2' \in \bR$ we have $(u_1, u_1', u_2', u_3, \ldots, u_D) \in T'$ and the inequality above holds.
  
  We repeat this argument, "replacing" $u_2, u_3, \ldots, u_D$ with $\Lambda$-almost arbitrary $u_2', u_3', \ldots, u_D'$ and noting the invariant that one of $u_1',\ldots, u_k'$ has weight of $D$ in the weight tuple of $(u_1, u_1', u_2', \ldots, u_k', u_{k+1}, \ldots, u_D) \in T'$ (which means we may apply (ii) to replace $u_{k+1}$ with $\Lambda$-almost arbitrary $u_{k+1}'$). Thus, for $\Lambda^{\times 2}$-a.a. $(u_1, u_1')$,
  \[p_1(u_1, u_2', u_3', \ldots, u_D') \leq p_1(u_1', u_2', u_3', \ldots, u_D') \]
for $\Lambda^{(D-1)}$-a.a. $(u_2', \ldots, u_D')$.

(iv) This follows immediately from (iii) and \eqref{eq17}.

(v) Suppose $f_{\vec{p}}$ is constant on a set $V$ with $\Lambda(V) > 0$. By (iv), can restrict $V$ to a set $V'$ with $\Lambda(V') = \Lambda(V) > 0$ s.t. for all distinct $u_1, u_2, \ldots, u_{D+1} \in V'$, $(u_1, \ldots, u_{D+1}) \in T'$ and
\[ p_1(u_i, u_2, \ldots, \widehat{u_i}, \ldots, \widehat{u_j}, \ldots, u_{D+1}) = p_1(u_j, u_2, \ldots, \widehat{u_i}, \ldots, \widehat{u_j}, \ldots, u_{D+1}) \ \forall i, j \]
We claim this is impossible.

Consider distinct $u_1, \ldots, u_{D+1} \in V'$. Suppose $u_i$ has weight $D$ and $u_k$ has weight 1 in the corresponding weight tuple. Pick $j \notin \{i, k\}$ (which exists since $D+1 \geq 3$). Then $u_j$ has weight 0 in the corresponding weight tuple. But this implies  the contradiction
\[ 1 = p_1(u_i, u_2, \ldots, \widehat{u_i}, \ldots, \widehat{u_j}, \ldots, u_{D+1}) = p_1(u_j, u_2, \ldots, \widehat{u_i}, \ldots, \widehat{u_j}, \ldots, u_{D+1}) = 0 \]
Therefore $f_{\vec{p}}$ is not constant on any set of positive $\Lambda$-measure.
 
 (vi) It suffices to show that for $\Lambda$-a.a. $u_1$, we have
 \[p_1(u_1,u_2,\ldots, u_D) = 1 \]
 for all but a $\Lambda^{\times (D-1)}$-null subset of tuples  $(u_2,\ldots, u_D) \in \bigg(f_{\vec{p}}^{-1}([0, f_{\vec{p}}(u_1)]) \bigg)^{D-1}$.
 
 Suppose not, say there is a set $G$ of $u_1$ of $\Lambda$-positive measure on which 
  \[p_1(u_1,u_2,\ldots, u_D) = 0 \]
for a tuples $(u_2,\ldots, u_D)$ in a set $H_{u_1} \subseteq \bigg(f_{\vec{p}}^{-1}([0, f_{\vec{p}}(u_1)])\bigg)^{D-1}$ of $\Lambda^{\times (D-1)}$-positive measure. By (iv), for $\Lambda$-a.a. $u_1 \in G$, 
\[0= p_1(u_1,u_2,\ldots, u_D) \geq p_1(u_{D+1}, u_2,\ldots, u_D)\]
for $\Lambda$-a.a. $u_{D+1} \in f_{\vec{p}}^{-1}([0, f_{\vec{p}}(u_1)])$ and $\Lambda^{\times (D-1)}$-a.a. $(u_2,\ldots, u_D) \in H_{u_1}$. In particular, there exists $2 \leq i \leq D$ and a subset $H_{u_1}' \subset H_{u_1}$ of positive $\Lambda^D$-measure s.t. the choice for $(u_1, u_2, \ldots, u_D)$ is $u_i$ and s.t. for $\Lambda$-a.a. $u_{D+1} \in f_{\vec{p}}^{-1}([0, f_{\vec{p}}(u_1)])$ we have  
 \begin{equation}\label{eqn9}
 p_1(u_1,u_2,\ldots, u_D) = p_1(u_{D+1}, u_2,\ldots, u_D) = 0
 \end{equation}
 For each such tuple consider the weight of $u_i$ in $(u_1,\ldots, u_{D+1})$; it must be 1 or $D$ and if it were 1 then the choice $(u_1,u_2,\ldots, u_D) \mapsto u_i$ implies none of $u_2,\ldots, u_D$ can have weight $D$ so $u_{D+1}$ must have weight $D$, contradicting $p_1(u_{D+1}, u_2,\ldots, u_D)= 0$. Thus $u_i$ has weight $D$ in $(u_1,\ldots, u_{D+1})$ so we must have the choice $(u_{D+1}, u_2,\ldots, u_D) \mapsto u_i$. Therefore 
 \begin{equation}\label{eqn10}
 p_1(u_i, u_2, \ldots, u_{i-1}, u_{D+1}, u_{i+1}, \ldots, u_D) = 1 \end{equation}
 
 Since \eqref{eqn9} holds for $\Lambda$-a.a. $u_{D+1} \in f_{\vec{p}}^{-1}([0, f_{\vec{p}}(u_1)])$ and $\Lambda^{\times(D-1)}$-a.a. $(u_2,\ldots, u_D) \in H_{u_1}'$, then  there is a set of tuples $(u_2,\ldots, u_{D+1})$ of positive $\Lambda^{\times D}$-measure for which \eqref{eqn9} (hence \eqref{eqn10}) holds for both $(u_2, \ldots, u_{D+1})$ and $(u_2,\ldots, u_{i-1}, u_{D+1}, u_{i+1}, \ldots, u_D, u_i)$ (i.e. it holds with the values $u_i, u_{D+1}$ swapped). Combining the two sets of \eqref{eqn9}, \eqref{eqn10} gives the contradiction
\[0 = p_1(u_i, u_2, \ldots, u_{i-1}, u_{D+1}, u_{i+1}, \ldots, u_D) = 1  \]

(vii) By (v) and (vi), for $\Lambda^{\times D}$-a.a. $(u_1,\ldots,u_D)$, $f_{\vec{p}}(u_i)$ are distinct and 
\[ p_1(u_k, u_1,\ldots, \widehat{u_k}, \ldots, u_D) \geq \mathbf{1}_{\{f_{\vec{p}}(u_k) \geq f_{\vec{p}}(u_i) \ \forall i\}} \ \forall k\]
Consider any such $(u_1,\ldots, u_D)$ and let $u_k$ maximize $f_{\vec{p}}$. Then 
\[ p_1(u_k, u_1,\ldots, \widehat{u_k}, \ldots, u_D) = \mathbf{1}_{\{f_{\vec{p}}(u_k) \geq f_{\vec{p}}(u_i) \ \forall i\}} = 1\]
so the choice for $(u_1,\ldots, u_D)$ is $u_k$. It follows that for all other $j \neq k$,
\[ p_1(u_j, u_1,\ldots, \widehat{u_j}, \ldots, u_D) = 0 = \mathbf{1}_{\{f_{\vec{p}}(u_j) \geq f_{\vec{p}}(u_i) \ \forall i\}}\]
We have shown that the a.s. inequality in (vi) is an a.s. equality.
 \end{proof}
 
 The next lemma shows that (ii) implies (iii) in Theorem \ref{thm4}.

\begin{lemma}\label{Lemma10}
Suppose $\vec{p}$ is a single-edge strategy s.t.
\begin{enumerate}[label=(\roman*)]
\item $f_{\vec{p}}$ is not constant on any set of positive $\Lambda$ measure, and
\item $p_k(u_1,\ldots, u_D) = \textbf{1}_{\{f_{\vec{p}}(u_k) \geq f_{\vec{p}}(u_i) \ \forall i\}} \ \mbox{for $\Lambda^{\times D}$-a.a.} \ (u_1,\ldots, u_D) \ \forall 1 \leq k \leq D$
\end{enumerate}
Then  $\sup_{\phi} E[\sigma_{\vec{p}^{\phi}}] = E[\sigma_{MAX}]$ where the supremum is taken over $\Lambda$-measure-preserving bijections $\phi: [0,1] \rightarrow [0,1]$.
\end{lemma}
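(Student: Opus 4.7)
The plan is to use the fact that Lemma \ref{lemma3}(ii) already provides the upper bound $\sup_\phi E[\sigma_{\vec{p}^\phi}] \leq E[\sigma_{MAX}]$, so it suffices to exhibit a sequence of $\Lambda$-measure-preserving bijections $\phi^k$ with $E[\sigma_{\vec{p}^{\phi^k}}] \to E[\sigma_{MAX}]$. Recalling that $f_{\vec{p}^\phi} = f_{\vec{p}} \circ \phi$, the task reduces to constructing $\phi^k$ so that $f_{\vec{p}} \circ \phi^k$ approximates $f_{MAX}$ in a sense strong enough to pass through the integral $\int_0^1 y\, f(y)\, dy$.

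The key intermediate step is to show that, as random variables on $([0,1], \Lambda)$, $f_{\vec{p}}$ has the same distribution as $f_{MAX}$. First I would plug hypothesis (ii) into the simplified density formula \eqref{eq17} to obtain the fixed-point equation
\[ f_{\vec{p}}(y) = D \int_{[0,1]^{D-1}} \mathbf{1}_{\{f_{\vec{p}}(y) \geq f_{\vec{p}}(u_i) \; \forall i\}} \, d\Lambda^{D-1} = D\, F(f_{\vec{p}}(y))^{D-1}, \]
where $F(v) := \Lambda(\{f_{\vec{p}} \leq v\})$ is the CDF of $f_{\vec{p}}$ viewed as a $[0,D]$-valued random variable. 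Solving gives $F(v) = (v/D)^{1/(D-1)}$ on the range of $f_{\vec{p}}$. Hypothesis (i) rules out atoms of $F$, and a short argument using continuity of $v \mapsto (v/D)^{1/(D-1)}$ shows there can be no gaps in the range either (such a gap would force an atom at the top of the gap). Hence the identity $F(v) = (v/D)^{1/(D-1)}$ extends to all $v \in [0,D]$, which coincides with the CDF of $f_{MAX}(Y) = DY^{D-1}$ for $Y \sim \mbox{Unif}[0,1]$.

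Equipped with this distributional identity, I would build $\phi^k$ by matching level sets. Partition $[0,D]$ into $k$ equal-width intervals $I_1^k, \ldots, I_k^k$ and set $B_j^k := f_{\vec{p}}^{-1}(I_j^k)$ and $C_j^k := f_{MAX}^{-1}(I_j^k)$. The distributional identity gives $\Lambda(B_j^k) = \Lambda(C_j^k)$, so Theorem \ref{thmNishiura} yields $\Lambda$-measure-preserving bijections $\phi^k_j\colon C_j^k \to B_j^k$ (discarding $\Lambda$-null pieces, and handling any piece of $\Lambda$-measure $1$ separately --- which cannot occur for large $k$ since $f_{\vec{p}}$ takes values throughout $[0,D]$). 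Gluing these into a $\Lambda$-measure-preserving bijection $\phi^k\colon [0,1] \to [0,1]$ ensures $|f_{\vec{p}}(\phi^k(y)) - f_{MAX}(y)| \leq D/k$ for a.e.\ $y$, hence
\[ \left|E[\sigma_{\vec{p}^{\phi^k}}] - E[\sigma_{MAX}]\right| = \left|\int_0^1 y\bigl(f_{\vec{p}}(\phi^k(y)) - f_{MAX}(y)\bigr)\,dy\right| \leq \frac{D}{2k} \to 0. \]

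The main obstacle is the second step: carefully extracting the CDF of $f_{\vec{p}}$ from the fixed-point equation while ruling out pathologies in its range using only hypothesis (i). Once the distributional identity is in hand, the level-set construction of $\phi^k$ via Theorem \ref{thmNishiura} and the final telescoping estimate are routine.
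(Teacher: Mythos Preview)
Your argument is correct but follows a different route than the paper. The paper proves (iii) $\Rightarrow$ (iv) directly: it builds $\phi_n$ mapping intervals $[a_i^n, a_{i+1}^n)$ (of the appropriate $\Lambda$-measure) onto $f_{\vec{p}}^{-1}([i/2^n,(i+1)/2^n))$, shows that this forces the order-indicator $\mathbf{1}_{\{f_{\vec{p}}(\phi_n(u_1)) \geq f_{\vec{p}}(\phi_n(u_i))\ \forall i\}}$ to converge a.s.\ to $\mathbf{1}_{\{u_1 \geq u_i\ \forall i\}}$, and then uses hypothesis (ii) together with \eqref{cdf} and bounded convergence to conclude $F_{\vec{p}^{\phi_n}} \to F_{MAX}$ pointwise. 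It never explicitly computes the distribution of $f_{\vec{p}}$.

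You instead factor through (v): the fixed-point identity $f_{\vec{p}}(y) = D\,F(f_{\vec{p}}(y))^{D-1}$, obtained by substituting (ii) into the density formula, pins down the law of $f_{\vec{p}}$ as that of $f_{MAX}$ (your handling of gaps is fine---a gap would force $F$ to jump at its upper endpoint, contradicting the atomlessness supplied by (i)). You then run the level-set-matching construction, which is exactly how the paper later proves (v) $\Rightarrow$ (iv). What your route buys is a cleaner quantitative bound $|E[\sigma_{\vec{p}^{\phi^k}}] - E[\sigma_{MAX}]| \leq D/(2k)$ and, as a free byproduct, the implication (iii) $\Rightarrow$ (v), which the paper obtains only via the detour (iii) $\Rightarrow$ (iv) $\Rightarrow$ (v). The paper's direct approach, on the other hand, avoids having to first analyse the CDF $F$ of $f_{\vec{p}}$ and works purely with the indicator structure. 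One small remark: you invoke \eqref{eq17}, which as stated is for consistent strategies; this is harmless since the strategy in (ii) is automatically consistent, or you can simply cite the general density formula \eqref{density} and note the $D$ summands coincide.
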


\begin{proof}
We begin by constructing a sequence $(\phi_n)$ of $\Lambda$-measure-preserving bijections for which
\begin{equation}\label{24}
     \mathbf{1}_{\{f_{\vec{p}}(\phi_n(u_1)) \geq f_{\vec{p}}(\phi_n(u_i)) \ \forall i\}} \rightarrow \mathbf{1}_{\{u_1 \geq u_i \ \forall i\}}  \ \mbox{$\Lambda^{\times D}$-a.s.}
\end{equation}
% If this is true, then the same statement with $u_1$ replaced by $u_k$ is trivially true as well. Using (ii) it will follow that $E[\sigma_{\vec{p}^{\phi_n}}] \rightarrow E[\sigma_{MAX}]$.

Consider any $n \in \bN$. By (i), inverse images of singletons $f_{\vec{p}}(\{c\})$ have $\Lambda$-measure 0. There exist $ 0 = a_0^n \leq a_1^n \leq \cdots \leq a_{D 2^n}^n = 1$ s.t.
\begin{equation}\label{eq24}
    \Lambda ([a_i^n, a_{i+1}^n)) = \Lambda \bigg(f_{\vec{p}}^{-1} \bigg( \bigg[ \frac{i}{2^n}, \frac{i+1}{2^n} \bigg) \bigg)\bigg) \ \forall 0 \leq i \leq D2^n-1
\end{equation} 
The indices stop at $D2^n-1$ since the range of $f_{\vec{p}}$ is a subset of $[0,D]$. Clearly, we can make it so that the partition for $n+1$ refines the one for $n$, for all $n$.

We construct $\phi_n$ by pasting together $\Lambda$-measure-preserving bijections we get from Theorem \ref{thmNishiura} between sets
\[ [a_i^n, a_{i+1}^n) \rightarrow f_{\vec{p}}^{-1} \bigg( \bigg[ \frac{i}{2^n}, \frac{i+1}{2^n} \bigg) \bigg)\]
for $0 \leq i \leq D2^n-1$ for which $f_{\vec{p}}^{-1} ( [ \frac{i}{2^n}, \frac{i+1}{2^n} ) )$ has positive $\Lambda$-measure.  Then for each such $i$, $f_{\vec{p}^{\phi_n}} = f_{\vec{p}} \circ \phi_n$ is a map
\[ [a_i^n, a_{i+1}^n) \rightarrow  \bigg[ \frac{i}{2^n}, \frac{i+1}{2^n} \bigg) \]

Let us show \eqref{24}. It suffices to show that for $\Lambda^{\times 2}$-a.a. $(u_1, u_2)$ we have
\[\mathbf{1}_{\{f_{\vec{p}}(\phi_n(u_1)) \geq f_{\vec{p}}(\phi_n(u_2)) \}} \rightarrow \mathbf{1}_{\{u_1 \geq u_2 \}}\]
since multiplying $D-1$ of these sequences of indicators gives $\eqref{24}$.

By (i), the sets $f_{\vec{p}}^{-1}(\{c\})$ have $\Lambda$-measure 0 hence it follows that for $\Lambda^{\times 2}$-a.a. $(u_1, u_2)$ we have $f_{\vec{p}}(u_1) \neq f_{\vec{p}}(u_2)$. Consider any such $u_1, u_2$. Since $\Lambda([a_i^n, a_{i+1}^n)) \downarrow 0$ as $n \rightarrow \infty$ (because $\Lambda \ll $ Lebesgue measure), then for large enough $n$, $u_1$ and $u_2$ are in different intervals of the form $[a_i^n, a_{i+1}^n)$ and they stay in these intervals (because the partitions get progressively more  refined). Thus $f_{\vec{p}}\circ \phi_n (u_1), f_{\vec{p}}\circ \phi_n (u_2)$ are in different intervals of the form $[\frac{i}{2^n}, \frac{i+1}{2^n})$ and the order is preserved:
\[u_1 < u_2 \Leftrightarrow u_1 \in [a_{i_1}^n, a_{i_1+1}^n), u_2 \in [a_{i_2}^n, a_{i_2+1}^n) \ \mbox{with} \ i_1 < i_2 \Leftrightarrow f_{\vec{p}}\circ \phi_n (u_1) < f_{\vec{p}}\circ \phi_n (u_2)\]
 Therefore we get \eqref{24}.

Recall that $\vec{p}^{\phi_n}$ are themselves consistent single-edge strategies. Using (ii) and \eqref{cdf}, it follows that for any $t \in [0,1]$,
\begin{equation}\label{limit}
\begin{split}
 1-F_{\vec{p}^{\phi_n}}(t) 
    &= D\int_{[t,1] \times [0,1]^{D-1}} p_1(\phi_n(\vec{u})) d\Lambda^{D}(u_1, \ldots, u_{D})  \\
    & =  D\int_{[t,1] \times [0,1]^{D-1}} \mathbf{1}_{\{f_{\vec{p}}(\phi_n(u_1)) \geq f_{\vec{p}}(\phi_n(u_i)) \ \forall i\}} d\Lambda^{D}(u_1, \ldots, u_{D}) 
\end{split}
\end{equation}
By \eqref{24} and  the Bounded Convergence Theorem, this last expression converges to the integral of the density $f_{MAX}$:
\begin{equation*}
\begin{split}
 & D\int_{[t,1] \times [0,1]^{D-1}} \mathbf{1}_{\{f_{\vec{p}}(\phi_n(u_1)) \geq f_{\vec{p}}(\phi_n(u_i)) \ \forall i\}} d\Lambda^{D}(u_1, \ldots, u_{D}) \\
 & \rightarrow D\int_{[t,1] \times [0,1]^{D-1}} \mathbf{1}_{\{u_k \geq u_i \ \forall i\}}d\Lambda^{D}(u_1, \ldots, u_{D}) \\
 & =  \int_{t}^{1} f_{MAX}(u_1) du_1 \\
& =  1- F_{MAX}(t)
\end{split}
\end{equation*}

But we already established that $F_{\vec{q}} \geq F_{MAX}$ for any single-edge strategy $\vec{q}$. Therefore  $F_{\vec{p}^{\phi_n}} \rightarrow F_{MAX}$ pointwise from below hence $E[\sigma_{\vec{p}^{\phi_n}}] \rightarrow E[\sigma_{MAX}]$ by the Bounded Convergence Theorem combined with the tail integral formula for expectation.
\end{proof}

\subsection{Theorem \ref{thm4}}
We wish to prove the rest of Theorem \ref{thm4}, but first we make some useful observations about $\sigma_{MAX}$. 

\begin{lemma}\label{lemma9}
Let $f_{MAX}^-$ be the quantile function
\[ f_{MAX}^-(t) = \inf \{x: t \leq f_{MAX}(x) \} \]
Then treating $f_{MAX}$ itself as a $[0,D]$-valued random variable on the probability space \\ $([0,1], \mathcal{B}([0,1]), \Lambda)$,
\[ P(f_{MAX} \leq t) = f_{MAX}^-(t) \ \forall t\]
In particular,
\[ P(f_{MAX} \leq f_{MAX}(y)) = y \ \forall y \]
\end{lemma}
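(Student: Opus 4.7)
The plan is to exploit the explicit formula $f_{MAX}(y) = D y^{D-1}$ together with the fact that, for $D \geq 2$, this is a continuous strictly increasing bijection $[0,1] \to [0,D]$. Under the probability space $([0,1], \mathcal{B}([0,1]), \Lambda)$, the identity map $U(y) = y$ is a $\mathrm{Unif}[0,1]$ random variable, so $f_{MAX}$ is simply $f_{MAX}(U)$. The claim then becomes an elementary quantile-function identity that I will verify by direct computation.

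First I would compute the quantile function $f_{MAX}^-$ explicitly. For $t \in [0,D]$, because $f_{MAX}$ is continuous and strictly increasing, the set $\{x \in [0,1] : t \leq f_{MAX}(x)\}$ equals $[f_{MAX}^{-1}(t), 1]$, where $f_{MAX}^{-1}(t) = (t/D)^{1/(D-1)}$. Hence $f_{MAX}^-(t) = (t/D)^{1/(D-1)}$ on this interval, with $f_{MAX}^-(t) = 0$ for $t < 0$ and $f_{MAX}^-(t) = 1$ for $t > D$, extended appropriately at the boundary.

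Next, I would compute $P(f_{MAX} \leq t)$ directly from the definition of the pushforward. For $t \in [0,D]$,
\[
P(f_{MAX} \leq t) = \Lambda\bigl(\{y \in [0,1] : D y^{D-1} \leq t\}\bigr) = \Lambda\bigl([0,(t/D)^{1/(D-1)}]\bigr) = (t/D)^{1/(D-1)},
\]
which matches $f_{MAX}^-(t)$. The boundary cases $t < 0$ and $t > D$ give $0$ and $1$ on both sides, completing the first equality.

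For the second equality, simply substitute $t = f_{MAX}(y)$ and use strict monotonicity: $f_{MAX}^-(f_{MAX}(y)) = f_{MAX}^{-1}(f_{MAX}(y)) = y$ for every $y \in [0,1]$. There is no real obstacle here; the only point to be slightly careful about is that the infimum definition of $f_{MAX}^-$ coincides with the ordinary inverse precisely because $f_{MAX}$ is continuous and strictly increasing, so no subtle left/right-limit issues arise.
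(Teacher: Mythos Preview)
Your proposal is correct and follows essentially the same approach as the paper: both arguments observe that $f_{MAX}(y) = Dy^{D-1}$ is a continuous, strictly increasing, invertible function on $[0,1]$, so the quantile function $f_{MAX}^-$ is simply its inverse and the identities follow immediately. You have merely written out the explicit computations that the paper leaves implicit.
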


%\begin{remark}
%We state this lemma using $F_{\Lambda}$ rather than directly plugging in $F_{\Lambda}(y)=y$ because this lemma actually holds with $\Lambda$ replaced by $\theta \ll \Lambda$. But due to our convenient coupling of the edge labels we need only deal with the $\Lambda$ case.
%\end{remark}

\begin{proof}
This is trivial since the density $f_{MAX}(y) = Dy^{D-1}$ is invertible so the quantile function  $f_{MAX}^-$ is just its inverse.
%Recall that 
%\[p_1^{MAX}(u_1,\ldots, u_D) = \mathbf{1}_{\{u_1 \geq u_i \ \forall i\}} \]
%so $\vec{p}^{MAX}$ satisfies \eqref{eq17}-\eqref{eq19} and it is everywhere deterministic. Clearly the weight tuple of any $(u_1,\ldots, u_{D+1}) \in T'$ has the maximal $u_i$ with weight $D$ and the second maximal $u_j$ with weight 1. Furthermore, for $u_1 < u_1'$ we have
%\[p_1^{MAX}(u_1, u_2, \ldots, u_D) \leq p_1^{MAX}(u_1', u_2, \ldots, u_D) \]
%with the inequality being strict if and only if one of $u_2, \ldots, u_D$ is in $(u_1, u_1']$. It follows that
%\[p_1^{MAX}(u_1, u_2, \ldots, u_D) < p_1^{MAX}(u_1', u_2, \ldots, u_D) \]
%on a set of positive $\Lambda^{\times (D-1)}$-measure of $(u_2, \ldots, u_D)$ if and only if $\Lambda((u_1, u_1')) > 0$, which of course is true since $u_1' > u_1$. The claim follows by \eqref{eq17}.

%(ii) 

%For any $y \in \bR$,
%\[ f_{MAX}(t) \leq y \Leftrightarrow t \leq f_{MAX}^-(y) \ \forall t\]
%so
%\[ P(f_{MAX} \leq y) = \Lambda((0, f_{MAX}^-(y)]) = F_{\Lambda}(f_{MAX}^-(y)) \]

%(iii) Since $f_{MAX}^{-1}(\{f_{MAX}(y)\})$ has $\Lambda$-measure 0 (by Lemma \ref{lemma8} (v)), then (ii) implies
%\[ P(f_{MAX} \leq f_{MAX}(y)) = F_{\Lambda}(f_{MAX}^-(f_{MAX}(y))) =F_{\Lambda}(y)  \]

\end{proof}

To prove Theorem \ref{thm4}, we need one more short lemma exploring what \\ $\sup_{\phi} E[\sigma_{\vec{p}^{\phi}}] = E[\sigma_{MAX}]$ tells us.

\begin{lemma}\label{lemma10}
Let $\vec{p}$ be a single-edge strategy and  $\phi_n: [0,1] \rightarrow [0,1]$ be $\Lambda$-measure-preserving bijections s.t. $E[\sigma_{\vec{p}^{\phi}}] \uparrow E[\sigma_{MAX}]$. Then there is a subsequence $\phi_{n_j}$ s.t. 
$F_{\vec{p}^{\phi_{n_j}}} (y) \rightarrow F_{MAX}(y)$, $f_{\vec{p}^{\phi_{n_j}}}(y) = f_{\vec{p}} \circ \phi_{n_j}(y) \rightarrow f_{MAX}(y)$ $\Lambda$-a.e. and almost uniformly on compact subsets of $[0,1]$.
\end{lemma}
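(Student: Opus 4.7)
The plan is to convert the expectation hypothesis into uniform convergence of CDFs, then use a Hardy--Littlewood rearrangement argument to show that $f_{\vec p}$ and $f_{MAX}$ are equidistributed under $\Lambda$, then upgrade a resulting weak $L^2$ convergence of the densities to strong $L^2$ via matching norms --- from which a.e.\ convergence on a further subsequence, and almost uniform convergence via Egorov, will follow.

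For the CDFs I would first apply Lemma~\ref{lemma3}(i) to the single-edge strategy $\vec p^{\phi_n}$ to get $F_n := F_{\vec p^{\phi_n}} \geq F_{MAX}$ pointwise, and combine with the tail integral identity
\[
\int_0^1 (F_n - F_{MAX})\,dy \;=\; E[\sigma_{MAX}] - E[\sigma_{\vec p^{\phi_n}}]
\]
to obtain $F_n \to F_{MAX}$ in $L^1$. Extracting a $\Lambda$-a.e.\ convergent subsequence and invoking a standard P\'olya/Dini upgrade (valid because the $F_n$ are monotone CDFs and $F_{MAX}$ is continuous) will give uniform convergence on $[0,1]$.

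To pass to the densities, the crucial intermediate result I would establish is that $f_{\vec p}$ and $f_{MAX}$ are equidistributed under $\Lambda$. Changing variables $u = \phi(y)$ gives $E[\sigma_{\vec p^\phi}] = \int_0^1 \phi^{-1}(u) f_{\vec p}(u)\,du$; Hardy--Littlewood bounds this by $\int_0^1 u\, f_{\vec p}^*(u)\,du$, where $f_{\vec p}^*$ is the increasing rearrangement of $f_{\vec p}$, and a Nishiura-based rearrangement construction in the style of Lemma~\ref{Lemma10} realizes this bound as the supremum over measure-preserving $\phi$. The hypothesis then forces $\sup_\phi E[\sigma_{\vec p^\phi}] = E[\sigma_{MAX}]$, and combined with $\int_0^y f_{\vec p}^* \geq y^D$ (Lemma~\ref{lemma3}(i) for the rearranging strategies) a nonnegative-integrand argument will force $f_{\vec p}^* = f_{MAX}$ $\Lambda$-a.e. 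Setting $f_n := f_{\vec p} \circ \phi_n \in [0,D]$, uniform CDF convergence gives $\int f_{n_j} \mathbf{1}_{[0,y]} \to \int f_{MAX} \mathbf{1}_{[0,y]}$ for every $y$, which extends by linearity and $L^\infty$-boundedness to $f_{n_j} \to f_{MAX}$ weakly in $L^2$. Measure-preservation of $\phi_{n_j}$ gives $\|f_{n_j}\|_{L^2}^2 = \|f_{\vec p}\|_{L^2}^2$, and equidistribution makes this equal $\|f_{MAX}\|_{L^2}^2$. The classical "weak $+$ norm $\Rightarrow$ strong" argument in $L^2$ then upgrades to $L^2$-convergence, hence a.e.\ convergence on a further subsequence, and Egorov's theorem on the bounded $f_{n_j}$ gives almost uniform convergence on $[0,1]$ (equivalently on compact subsets).

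The main obstacle I anticipate is the equidistribution step: without the $L^2$-norm equality it secures, the weak $L^2$ convergence of the densities cannot be strengthened to a.e.\ convergence along any subsequence. The Hardy--Littlewood inequality together with Theorem~\ref{thmNishiura}, used to exhibit or approximate the optimal rearrangement, is the essential input, echoing the role of Nishiura's theorem in the construction of Lemma~\ref{Lemma10}.
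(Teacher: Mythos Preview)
Your proof is correct and takes a genuinely different route from the paper's. Both arguments begin the same way: Lemma~\ref{lemma3}(i) gives $F_{\vec p^{\phi_n}} \geq F_{MAX}$ pointwise, so the tail-integral identity converts the expectation hypothesis into $L^1$ convergence $F_{\vec p^{\phi_n}} \to F_{MAX}$, from which one extracts an a.e.\ (and then, by monotonicity and continuity of $F_{MAX}$, uniform) convergent subsequence. The divergence is at the density step. The paper simply asserts in one line that almost uniform convergence of the CDFs ``gives us'' almost uniform convergence of the densities along the same subsequence; this implication is not valid in general (e.g.\ $f_n(x)=1+\sin(2\pi n x)$ has $F_n\to x$ uniformly while no subsequence of $f_n$ converges a.e.), and the paper offers no further justification.

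Your argument circumvents this entirely by effectively proving the implication (iv)$\Rightarrow$(v) of Theorem~\ref{thm4} inside the lemma. The Hardy--Littlewood bound gives $E[\sigma_{\vec p^{\phi_n}}]\le \int_0^1 u\,f_{\vec p}^*(u)\,du$, so the hypothesis yields $\int_0^1 u\,f_{MAX}\le \int_0^1 u\,f_{\vec p}^*$; on the other hand Lemma~\ref{lemma3}(i) applied to arbitrary Borel sets gives $\int_0^y f_{\vec p}^*\ge y^D=\int_0^y f_{MAX}$ for all $y$, and integrating by parts forces equality, hence $f_{\vec p}^*=f_{MAX}$ a.e.\ (in fact you do not even need the Nishiura construction to \emph{achieve} the Hardy--Littlewood bound---the upper bound alone suffices here). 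Equidistribution then supplies the $L^2$-norm equality that upgrades the weak $L^2$ convergence of the densities to strong $L^2$, after which a further a.e.\ subsequence and Egorov give almost uniform convergence. The cost is a longer proof that bundles Lemma~\ref{lemma10} together with (iv)$\Rightarrow$(v); the payoff is a fully rigorous argument where the paper's one-line density step is not.
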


\begin{proof}
 By the tail integral formula for expectation,
\[ \int_{0}^{1} F_{\vec{p}^{\phi_n}}(t)- F_{MAX}(t)  dt = E[\sigma_{MAX}] - E[\sigma_{\vec{p}^{\phi}}] \downarrow 0\]
where the integrands $F_{\vec{p}^{\phi_n}}(t)- F_{MAX}(t)$ are non-negative from Lemma \ref{lemma3} (i), hence this is $L^1$ convergence. It follows that there exists a subsequence $n_j$ s.t. $F_{\vec{p}^{\phi_{n_j}}}(t) \rightarrow F_{MAX}(t)$ a.e. and a.u. on compact subsets of $[0,1]$. The latter convergence gives us that $f_{\vec{p}^{\phi_{n_j}}}(t) \rightarrow f_{MAX}(t)$ a.e. and a.u. on compact subsets of $[0,1]$.  
\end{proof}

We are now ready to prove Theorem \ref{thm4} in its entirety.

 \begin{manualtheorem}{\ref{thm4}}
Let $\vec{p}$ be a single-edge strategy.  The following are equivalent:
\begin{enumerate}[label=(\roman*)]
\item $\sigma_{\vec{p}}$ is an extreme point
\item Any consistent single-edge strategy achieving $\sigma_{\vec{p}}$  must be deterministic
\item $f_{\vec{p}}$ is not constant on sets of positive $\Lambda$-measure and $\sigma_{\vec{p}}$ is given by the following single-edge strategy $\vec{q}$:
\[ q_k(u_1,\ldots, u_D) = \textbf{1}_{\{f_{\vec{p}}(u_k) \geq f_{\vec{p}}(u_i) \ \forall i\}} \ \mbox{for $\Lambda^{\times D}$-a.a.} \ (u_1,\ldots, u_D) \]
In other words, $\sigma_{\vec{p}}$ is achieved by  the deterministic single-edge strategy "choose whichever label yields a higher value when evaluating the density $f_{\vec{p}}$"
\item $\sup_{\phi} E[\sigma_{\vec{p}^{\phi}}] = E[\sigma_{MAX}]$
\item For $U \sim [0,1]$ if $f_{\vec{p}}(U), f_{MAX}(U)$ as $[0,D]$-valued random variables  on the probability space $([0,1], \mathcal{B}([0,1]), \Lambda)$, then they have the same distribution. That is,
\[P(f_{\vec{p}} \leq t) = P(f_{MAX} \leq t) = \bigg(\frac{t}D\bigg)^{\frac1{D-1}} \ \forall t \in [0,D]\]
\end{enumerate}
\end{manualtheorem}

\begin{proof} 
(i) $\Leftrightarrow $ (ii), (i) $\Rightarrow$ (iii), (iii) $\Rightarrow $ (iv) These are given by Lemmas \ref{lemma6}, \ref{lemma8}, \ref{Lemma10}.

(iv) $\Rightarrow$ (i) Suppose (iv) holds but $\sigma_{\vec{p}}$ is not an extreme point. Since Radon-Nikodym derivatives are additive, then
\[ f_{\vec{p}} = \alpha f_{\vec{q}} + (1-\alpha) f_{\vec{r}}\]
for $\alpha \in (0,1)$ and single-edge strategies $\vec{q}, \vec{r}$ s.t. $f_{\vec{p}} \neq  f_{\vec{q}}$ on a set of positive $\Lambda$ measure.

Take $\phi_n$ $\Lambda$-measure-preserving bijections s.t. $E[\sigma_{\vec{p}^{\phi_n}}] \uparrow E[\sigma_{MAX}]$. Then
\[\alpha E[\sigma_{\vec{q}^{\phi_n}}] + (1-\alpha) E[\sigma_{\vec{r}^{\phi_n}}] \uparrow \alpha E[\sigma_{MAX}] + (1-\alpha) E[\sigma_{MAX}] \]
where $E[\sigma_{\vec{q}^{\phi_n}}], E[\sigma_{\vec{r}^{\phi_n}}] \leq E[\sigma_{MAX}]$. It follows that 
\[E[\sigma_{\vec{q}^{\phi_n}}], E[\sigma_{\vec{r}^{\phi_n}}] \uparrow E[\sigma_{MAX}]\]
By Lemma \ref{lemma10}, there is a subsequence $\phi_{n_j}$ s.t. 
\[ f_{\vec{q}} \circ \phi_{n_j},  f_{\vec{r}} \circ \phi_{n_j} \rightarrow f_{MAX} \]
$\Lambda$-a.e. and $\Lambda$-a.u. on compact subsets of $[0,1]$.

Since $f_{\vec{q}} \neq f_{\vec{r}}$ on a set of positive $\Lambda$-measure then $\exists \epsilon > 0$ s.t. 
\[Q := \{x: |f_{\vec{q}}(x) - f_{\vec{r}}(x)| \geq \epsilon \} \]
has measure $\delta := \Lambda(Q) >0$. Now, by the choice of $\phi_{n_j}$, there is a subset $R \subset [0, 1]$ of $\Lambda$ measure $\leq \frac{\delta}2$ s.t. $f_{\vec{q}}\circ \phi_{n_j}, f_{\vec{r}} \circ \phi_{n_j} \rightarrow f_{MAX}$ uniformly on $[0, 1] \setminus R$. In particular, $\exists J \in \bN$ s.t. for all $j \geq J$ and $y \in [0, 1] \cap R$ we have
\[|f_{\vec{q}} \circ \phi_{n_j}(y) - f_{MAX}(y)|, |f_{\vec{r}} \circ \phi_{n_j}(y) - f_{MAX}(y)| < \frac{\epsilon}2 \Rightarrow |f_{\vec{q}} \circ \phi_{n_j}(y) - f_{\vec{r}} \circ \phi_{n_j}(y) | < \epsilon\]
Thus 
\begin{equation}\label{eq25}
    \phi_{n_J}([0,1] \setminus R) \subseteq [0,1] \setminus Q
\end{equation}
But $\phi_{n_J}$ is $\Lambda$-measure-preserving and 
\begin{align*}
    \Lambda(\phi_{n_J}([0,1] \setminus R)) &= \Lambda([0,1]) - \Lambda(R)\\
    &\geq \Lambda([0,1]-\frac{\delta}2 \\
    &> \Lambda([0,1])-\Lambda(Q) \\
    &= \Lambda([0,1] \setminus Q)
\end{align*}
Contradiction of \eqref{eq25}. Therefore $\sigma_{\vec{p}}$ is an extreme point.

(iv) $\Rightarrow$ (v) Take $\phi_n$ $\Lambda$-measure-preserving bijections s.t. $E[\sigma_{\vec{p}^{\phi_n}}] \uparrow E[\sigma_{MAX}]$. By Lemma \ref{lemma10} there is a subsequence $\phi_{n_j}$ s.t. $f_{\vec{p}^{\phi_{n_j}}} = f_{\vec{p}} \circ \phi_{n_j} \rightarrow f_{MAX}$ $\Lambda$-a.e and a.u. on compact subsets of $[0,1]$. 

Now for any $z \in [0,1]$, 
\[P(f_{\vec{p}} \leq z) = 
\Lambda(f_{\vec{p}}^{-1}([0,z])) = \Lambda( (f_{\vec{p}^{\phi_{n_j}}})^{-1} ([0,z]))\]
We claim this last expression equals $\Lambda(f_{MAX}^{-1}([0,z])) = P(f_{MAX} \leq z)$.

Suppose $P(f_{\vec{p}} \leq z) \neq P(f_{MAX} \leq z)$ for some $z$, say $P(f_{\vec{p}} \leq z) < \Lambda(f_{MAX}^{-1}([0,z]))$; the $>$ case is very similar. By continuity from below and since $\Lambda(f_{MAX}^{-1}(\{z\})) = 0$, there exists $ \delta > 0$ s.t.
\[\Lambda( f_{\vec{p}} ^{-1} ([0,z])) < \Lambda(f_{MAX}^{-1} ([0, z-\delta]))\]
By a.u. convergence on $[0,1]$, there is a subset $R \subset [0, 1]$ off of which $f_{\vec{p}} \circ \phi_{n_j} \rightarrow f_{MAX}$ uniformly s.t. $\Lambda(R)$ is sufficiently small so that for all $j$,
\begin{equation}\label{eq26}
    \Lambda( (f_{\vec{p}} \circ \phi_{n_j})^{-1} ([0,z])) = \Lambda( f_{\vec{p}} ^{-1} ([0,z])) < \Lambda(f_{MAX}^{-1} ([0, z-\delta]) \cap [0,1] \setminus R)
\end{equation}
Uniform convergence gives a $J$ s.t. for all $j \geq J$ and all $t \in f_{MAX}^{-1} ([0, z-\delta]) \cap [0,1] \setminus R$,
\[ f_{\vec{p}} \circ \phi_{n_j}(t) \in (f_{MAX}(t)- \delta, f_{MAX}(t) + \delta) \cap [0,\infty) \subseteq [0, z] \]
(here we use the fact that $f_{\vec{p}}$ is a non-negative function). Thus
\[f_{MAX}^{-1} ([0, z-\delta]) \cap [0,1] \setminus R \subseteq  (f_{\vec{p}} \circ \phi_{n_j}) ^{-1} ([0,z]) \]
which is a contradiction of \eqref{eq26}. Therefore
\[P(f_{\vec{p}} \leq z) = P(f_{MAX} \leq z) =  f_{MAX}^-(z) \]
where the last inequality holds by Lemma \ref{lemma9}. The expression on the right is precisely $ (\frac{z}D)^{\frac1{D-1}}$.

(v) $\Rightarrow$ (iv) We construct $\Lambda$-measure-preserving bijections 
 $\phi_n : [0,1] \rightarrow [0,1]$ s.t.\\ $E[\sigma_{\vec{p}^{\phi_n}}] \rightarrow E[\sigma_{MAX}]$. 
 
 Consider any $n$. By (v) combined with Lemma \ref{lemma9}, for any $z \in \{0,1,\ldots, 2^n-1\}$ we have
 \begin{align*}
     &\Lambda \bigg( f_{\vec{p}}^{-1} \bigg( \bigg[f_{MAX} \bigg(\frac{z}{2^n} \bigg), f_{MAX} \bigg(\frac{z+1}{2^n} \bigg) \bigg) \bigg) \bigg) \\
     &= P\bigg(f_{MAX} \bigg(\frac{z}{2^n} \bigg) \leq f_{\vec{p}} < f_{MAX} \bigg(\frac{z+1}{2^n} \bigg) \bigg) \\
     &= P\bigg(f_{MAX} \bigg(\frac{z}{2^n} \bigg) \leq f_{MAX} < f_{MAX} \bigg(\frac{z+1}{2^n} \bigg) \bigg) \\
     &= \Lambda\bigg( \bigg[\frac{z}{2^n}, \frac{z+1}{2^n} \bigg) \bigg)
 \end{align*}
 We construct $\phi_n$ by pasting together $\Lambda$-measure-preserving bijections we get from Theorem \ref{thmNishiura} between sets
 \[\bigg[\frac{z}{2^n}, \frac{z+1}{2^n} \bigg) \rightarrow f_{\vec{p}}^{-1} \bigg( \bigg[f_{MAX} \bigg(\frac{z}{2^n} \bigg), f_{MAX} \bigg(\frac{z+1}{2^n} \bigg) \bigg) \bigg) \]
% for $z $ for which $\Lambda( [\frac{z}{2^n}, \frac{z+1}{2^n})) > 0$. 
Then for each $z$, $f_{\vec{p}^{\phi_n}} = f_{\vec{p}} \circ \phi_n$ is a map
  \[\bigg[\frac{z}{2^n}, \frac{z+1}{2^n} \bigg) \rightarrow  \bigg[f_{MAX} \bigg(\frac{z}{2^n} \bigg), f_{MAX} \bigg(\frac{z+1}{2^n} \bigg) \bigg)  \]
We now compute the expectation of each $X(\vec{p}^{\phi_n})$:
\begin{align*}
E[X(p^{MAX}) ] &\geq E[X_0(\vec{p}^{\phi_n}) ] \\
&= \sum_{z=0}^{2^n-1}  \int_{\frac{z}{2^n}}^{\frac{z+1}{2^n}} f_{\vec{p}}(\phi_n(t)) t\ dt\\
&\geq \sum_{z=0}^{2^n-1} \int_{\frac{z}{2^n}}^{\frac{z+1}{2^n}} f_{MAX} \bigg(\frac{z}{2^n} \bigg) t \ dt
\end{align*}
But $f_{MAX}(t) t$ is nondecreasing hence is of bounded variation on $[0,1]$ hence this lower bound converges to $E[X(p^{MAX}) ]$ as  $n \rightarrow \infty$. Thus
\[ E[\sigma_{MAX}] = E[X(p^{MAX}) ] = \sup_n E[X(\vec{p}^{\phi_n}) ] =\sup_{\phi} E[\sigma_{\vec{p}^{\phi}} ]  \]
%But supremums can be interchanged so we get
%\begin{align*}
%    E[\sigma_{MAX}] &= \sup_M E[X_0(p^{MAX}) \mathbf{1}_{[-M, M]}] \\
 %&= \sup_M \sup_n E[X_0(\vec{p}^{\phi_n}) \mathbf{1}_{[-M, M]}] \\
%  &= \sup_n \sup_M E[X_0(\vec{p}^{\phi_n}) \mathbf{1}_{[-M, M]}] \\
%&=\sup_n E[\sigma_{\vec{p}^{\phi}} ]
%\end{align*}
 as desired.
\end{proof}

By the Krein-Milman Theorem, $\mathcal{R}$ is exactly the closed convex hull of its extreme points. Thus we obtain a description of the set of limit points of empirical measures.

As a final remark, observe that we only  used the compactness of the space $\mathcal{M}_1$ of probability measures on [0,1] to reduce from generic strategies to single-edge strategies. Furthermore, the fact that $\Lambda$ has nice formulas for its cdf and pdf was convenient but unnecessary. The proof of this theorem could be tweaked to hold with $\Lambda$ replaced by an arbitrary distribution $\theta$ on $\bR$ with finite mean. We could thus get a similar characterization for the extreme points of $\{\sigma_{\psi}: \mbox{single-edge strategies} \ \psi\}$. The only caveats would be that this set might not coincide with $\{\mbox{limit points of } \ \frac1n \mu_{0 \rightarrow n}(\chi): \ \mbox{strategies} \ \chi\}$ and that the value distribution of the densities $f_{\vec{p}}$ of extreme points might not have as nice a form as $D \cdot Beta(1,D)$.

\subsection{The Discrete Case with Example}\label{discrete}
The same argument with the weight tuples can be used to show a discrete version of Theorem \ref{thm4}, where the i.i.d. labels $U^j$ are $Unif\{1,\ldots,K\}$. 

\begin{manualtheorem}{13'}\label{13discrete}
Let $\vec{p}$ be a single-edge strategy.  The following are equivalent
\begin{enumerate}[label=(\roman*)]
\item $\sigma_{\vec{p}}$ is an extreme point
\item Any consistent single-edge strategy achieving $\sigma_{\vec{p}}$  must be deterministic.
\item $\sigma_{\vec{p}}$  is given by the single-edge strategy "choose whichever label is maximal with respect to the ordering $\alpha(1) < \cdots < \alpha(K)$"  for some $\alpha \in S_K$.
\item There exists a permutation $\beta \in S_K$ s.t. $\vec{p}(\beta(\vec{u})) = \vec{p}^{MAX}(\vec{u}) \ \forall \vec{u}$.
\item $\sigma_{\vec{p}}$ has a probability mass function whose value distribution is 
\[\bigg\{\frac1{K^D}, \frac{2^D-1}{K^D}, \frac{3^D-2^D}{K^D}, \ldots, \frac{K^D-(K-1)^D}{K^D}\bigg\}\]
\end{enumerate}
\end{manualtheorem}

Let us begin by highlighting the main differences between Theorems \ref{thm4} and \ref{13discrete}. In the continuous case,  the extreme points are those $\sigma_{\vec{p}}$ achieved by the deterministic single-edge strategy of choosing whichever $u_i$ maximizes the value of the density $f_{\vec{p}}$, or equivalently those $\sigma_{\vec{p}}$ whose density has the same value distribution as $\sigma_{MAX} = Dx^{D-1}dx$ (namely $D \cdot \mbox{Beta}(1,D)$). Contrast this with the discrete case, where the  extreme points are those $\sigma_{\vec{p}}$ achieved by the deterministic single-edge strategy of choosing whichever $u_i$ maximizes the value of the probability mass function, or equivalently those $\sigma_{\vec{p}}$ whose pmf has the same value distribution as $\sigma_{MAX} = \frac1{K^D} \delta_1 + \frac{2^D-1}{K^D}\delta_2+ \cdots + \frac{K^D-(K-1)^D}{K^D} \delta_K$. Furthermore, in the discrete case, if we write the extremal $\sigma_{\vec{p}}$ as
\[\sigma_{\vec{p}} = \frac1{K^D} \delta_{\alpha(1)} +\frac{2^D-1}{K^D}\delta_{\alpha(2)} + \cdots + \frac{K^D-(K-1)^D}{K^D} \delta_{\alpha(K)}  \]
for some permutation $\alpha \in S_K$ then clearly the single-edge strategy of choosing whichever $u_i$ maximizes the value of the pmf can be equivalently described as the single-edge strategy of choosing whichever $u_i$ is maximal according to the ordering $\alpha(1) < \alpha(2) < \cdots < \alpha(K)$. It is easy to see that this $\alpha$ satisfies $\vec{p}(\alpha^{-1}(\vec{u})) = \vec{p}^{MAX}(\vec{u})$ (which is where (iv) above comes from).

In the discrete setting, the existence of this $\alpha \in S_K$ leads  to a natural bijection between the extreme points of the permutohedron and of $\mathcal{R}$, the set of achievable distributions $\{\sigma_{\vec{p}}\}$, by mapping $\sigma_{\vec{p}}$  to the ordering $\alpha(1) < \cdots < \alpha(K)$ associated with it.

Since convex combinations of single-edge strategies translate to convex combinations of the distributions $\sigma_{\vec{p}}$ then this bijection extends to a bijection between the permutohedron and $\mathcal{R}$. Of course, the permutohedron is the same for different $D$ but the bijection depends on $D$.

It is also worth noting that our proof of Theorem \ref{thm4} adapted to this discrete setting becomes much simpler and more intuitive. Recall we showed that a consistent and deterministic single-edge strategy is one which induces an ordering on the possible values of the edge labels in the sense that $y$ "dominates" $x$ if and only if the choice for $\vec{u}$ is never $x$ if both $x$ and $y$ appear in $\vec{u}$. In the discrete case, there are only $K$ possible values of the edge labels so immediately this gives the ordering $\alpha(1) < \cdots < \alpha(K)$ from which Theorem \ref{13discrete} follows.

As an example, let us briefly work through the $D=2, K =4$ discrete case.
\begin{center}
\captionsetup{type=figure}
\includegraphics[scale=0.4]{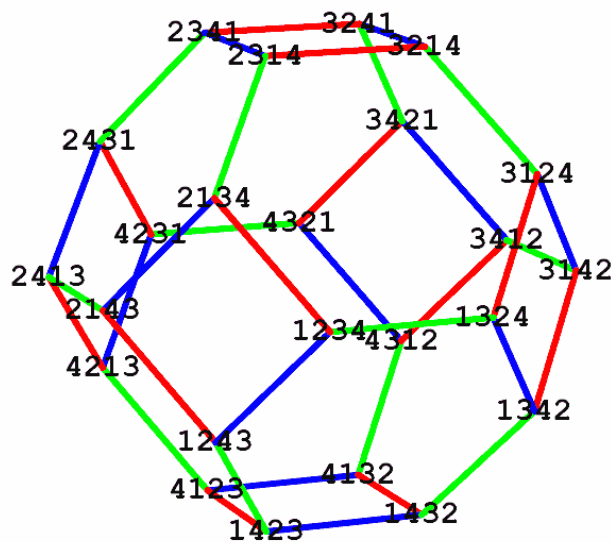}
\captionof{figure}{Permutohedron of order 4, \cite{holroyd}}
\end{center}
The bijection between the extreme points of the permutohedron of order 4 and the extreme points of the set of consistent single-edge strategies goes as follows: 
\[ 1342 \mapsto \sigma_{1342} = \frac1{16} \delta_1 + \frac{2^2-1}{16} \delta_3 + \frac{3^2-2^2}{16} \delta_4 + \frac{4^2-3^2}{16} \delta_2\]
where $\sigma_{1342}$ is the distribution obtained by  choosing whichever of the samples $u^1, u^2 \in \{1,2,3,4\}$ is maximal with respect to the ordering $1 < 3 < 4 < 2$.

It is plain to see that all the deterministic consistent single-edge strategies correspond to such an ordering, and the corresponding distribution has a pmf with the same value distribution $\{\frac1{16},\frac3{16},\frac5{16},\frac7{16}\}$.

\section{Grid Entropy in this Model}\label{grid}
In this short section, we compute the grid entropy of the extreme points described in the previous section and we describe grid entropy in general using a simplified formula for the Gibbs Free Energy.

\subsection{Grid Entropy of Extreme Points}
Recall that the extreme points of $\{\sigma_{\vec{p}}\}$ are given by the single-edge strategy of choosing whichever edge label maximizes the density $f_{\vec{p}}$. We show that such extreme points $\sigma_{\vec{p}}$ have grid entropy 0. 

However, we first need a short lemma about the partial averages of the expectations of order statistics being bounded away from 0.

\begin{lemma}\label{orderStats}
Let $Z_i \sim \theta, i \geq 0$ be i.i.d. random variables s.t. $\theta$ is a distribution on $\bR$ with  cdf $F_{\theta}$ satisfying $F_{\theta}(0) = 0$. Consider the order statistics $Z_{0:n} \leq \cdots \leq Z_{n:n}$. Then there exists a constant $C> 0$ s.t. for large enough $n$,
\[ \frac1n \sum_{k=0}^{\lfloor n \epsilon \rfloor} E[Z_{k:n}] \geq C \]
\end{lemma}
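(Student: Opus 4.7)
The plan is to reduce the claim to controlling a single sample quantile. Since $F_\theta(0)=0$ forces $Z_i>0$ almost surely and the order statistics are nondecreasing, I would first throw away the lower half of indices in the sum:
\[ \frac{1}{n}\sum_{k=0}^{\lfloor n\epsilon\rfloor}E[Z_{k:n}] \;\geq\; \frac{1}{n}\bigl(\lfloor n\epsilon\rfloor - \lceil n\epsilon/2\rceil + 1\bigr)\,E[Z_{\lceil n\epsilon/2\rceil:n}] \;\geq\; \frac{\epsilon}{3}\,E[Z_{\lceil n\epsilon/2\rceil:n}] \]
for every $n\geq 6/\epsilon$. Thus it suffices to bound the expectation of the single sample quantile $Z_{\lceil n\epsilon/2\rceil:n}$ below by a positive constant independent of $n$.

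For this second step I would fix a real number $q>0$ small enough that $F_\theta(q)<\epsilon/4$; such a $q$ exists because $F_\theta$ is right-continuous at $0$ and $F_\theta(0)=0$. The event $\{Z_{\lceil n\epsilon/2\rceil:n}\geq q\}$ is equivalent to at least $n+1-\lceil n\epsilon/2\rceil$ of the $n+1$ samples falling in $[q,\infty)$. The count of such samples is binomial with mean $(n+1)(1-F_\theta(q))>(n+1)(1-\epsilon/4)$, which exceeds the threshold $n+1-\lceil n\epsilon/2\rceil$ by order $n$. A Chebyshev (or Hoeffding) bound on this binomial therefore yields $P(Z_{\lceil n\epsilon/2\rceil:n}\geq q)\to 1$, and hence
\[ E[Z_{\lceil n\epsilon/2\rceil:n}] \;\geq\; q\cdot P(Z_{\lceil n\epsilon/2\rceil:n}\geq q) \;\geq\; \frac{q}{2} \]
for all sufficiently large $n$. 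Combining the two displays, $\frac{1}{n}\sum_{k=0}^{\lfloor n\epsilon\rfloor}E[Z_{k:n}]\geq \epsilon q/6$ for large $n$, so the constant $C:=\epsilon q/6>0$ works.

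The only real decision is the choice of $q$; once $q$ is fixed, the concentration step is routine and entirely one-sided, so no uniform integrability of the order statistics is ever needed. An equivalent but slightly more analytic route would be to invoke the classical a.s.\ convergence $Z_{\lfloor nt\rfloor:n}\to F_\theta^{-}(t)$ at $t=\epsilon/2$ together with Fatou's lemma, giving the same bound with $C$ expressed via the quantile $F_\theta^{-}(\epsilon/2)>0$. The main potential obstacle I foresee — promoting almost sure convergence of an order statistic to convergence of its expectation — is sidestepped by the truncation at $q$, which turns the estimate into a trivial one-sided bound.
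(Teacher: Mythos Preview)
Your proof is correct and follows essentially the same two-step approach as the paper: first reduce to a single order statistic $Z_{\approx n\epsilon/2:n}$ by discarding the lower half of the sum, then bound its expectation from below by choosing a threshold $q>0$ with $F_\theta(q)$ small and applying a binomial tail bound. The only cosmetic differences are that the paper uses Markov's inequality (obtaining $P(Z_{\lfloor n\epsilon/2\rfloor:n}\le C)\le 1/2$) where you use Chebyshev/Hoeffding (obtaining $P\to 1$), and the paper writes $\lfloor n\epsilon/2\rfloor$ rather than $\lceil n\epsilon/2\rceil$; neither affects the argument.
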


\begin{remark} 
We allow for the case where some of these expected values may be $\infty$.
\end{remark}

\begin{proof}
Observe that half the terms in this sum are bounded below by $E[Z_{\lfloor \frac{n \epsilon}2 \rfloor:n}]$ hence
\[ \frac1n \sum_{k=0}^{\lfloor n \epsilon \rfloor} E[Z_{k:n}] \geq \frac1n \bigg \lfloor \frac{n \epsilon}2 \bigg \rfloor E[Z_{\lfloor \frac{n \epsilon}2 \rfloor:n}]  \]
Thus it suffices to find $C>0$ s.t.  for large $n$, $E[Z_{\lfloor \frac{n \epsilon}2 \rfloor:n}] \geq \frac12 C$.

 By right-continuity of the cdf $F_{\theta}$, we can take $C > 0$ s.t. $F_{\theta}(C) \leq \frac18 \epsilon$.
 
 Consider the i.i.d. Bernoulli random variables $ \textbf{1}_{\{Z_i \leq C\}}$ with success probability $F_{\theta}(C)$. Then Markov's Inequality yields 
\[P(Z_{\lfloor \frac{n \epsilon}2 \rfloor:n} \leq C) = P \bigg(\#\mbox{successes in Bin($n, F_{\theta}(C)$)} \geq \bigg\lfloor \frac{n \epsilon}2 \bigg \rfloor \bigg) \leq \frac{n F_{\theta}(C)}{\lfloor \frac{n \epsilon}2  \rfloor} \leq \frac12 \]
for large $n$ by the choice of $C$. It follows that
\[ E[Z_{\lfloor \frac{n \epsilon}2 \rfloor:n}] \geq \frac12 C  \]
for large $n$, which completes the proof.
\end{proof}

\begin{theorem}\label{entropy0}
Fix $\tau: [0,1] \rightarrow \bR$ measurable and bounded s.t. $\tau$ is not constant on sets of positive $\Lambda$ measure, and consider the single-edge strategy of picking the maximal $\tau(U^j)$ over $1 \leq j \leq D$, given by
\[ \vec{p}(u_1,\ldots,u_D) = \textbf{1}_{\{\tau(u_k) \geq \tau(u_i) \ \forall i\}} \]
 Then $||\sigma_{\vec{p}}|| = 0$.
\end{theorem}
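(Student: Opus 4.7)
My plan is to use the convex-duality formula of Theorem \ref{gridEntropyPart1}(iv) together with the explicit computation of Gibbs free energy available in this model. First I note that since $\vec{p}$ is itself a single-edge strategy, Theorem \ref{limitPts} puts $\sigma_{\vec{p}} \in \mathcal{R}$, giving the easy half $||\sigma_{\vec{p}}|| \geq 0$. All the work lies in the matching upper bound $||\sigma_{\vec{p}}|| \leq 0$, equivalently $-||\sigma_{\vec{p}}|| \geq 0$.

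In this model the Gibbs free energy has a closed form that I will derive from scratch. Because every path $\pi: 0 \to n$ amounts to a sequence of independent edge choices $(j_0, \ldots, j_{n-1})$, the partition function factors as $\sum_\pi e^{\beta T(\pi)} = \prod_{i=0}^{n-1} \sum_{j=1}^D e^{\beta \tau(U_i^j)}$. Dividing by $n$ and taking logs turns $\frac{1}{n} \log \sum_\pi e^{\beta T(\pi)}$ into an average of i.i.d. bounded random variables (bounded since $\tau$ is), so the SLLN yields $G^\beta(\tau) = E[\log \sum_{j=1}^D e^{\beta \tau(U^j)}]$ almost surely. Combining this with $\langle \tau, \sigma_{\vec{p}}\rangle = E[\max_j \tau(U^j)]$ (since $\sigma_{\vec{p}}$ is by construction the law of the $U^j$ that maximizes $\tau$), plugging $\tau$ as test function into the variational identity and pulling the max out of the log produces, for every $\beta > 0$, the bound $-||\sigma_{\vec{p}}|| \geq -E[\log \sum_{j=1}^D e^{\beta(\tau(U^j) - \max_i \tau(U^i))}]$.

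To finish I send $\beta \to \infty$. The hypothesis that $\tau$ is not constant on any set of positive $\Lambda$-measure forces every level set of $\tau$ to be $\Lambda$-null, so $\tau(U^1), \ldots, \tau(U^D)$ are almost surely distinct and the maximizer is a.s. unique. On this event each exponent $\beta(\tau(U^j) - \max_i \tau(U^i))$ either equals $0$ (for the unique maximizer) or tends to $-\infty$, so the bracketed sum tends to $1$ and its log to $0$. The integrand lies in $[0, \log D]$, so the dominated convergence theorem yields $E[\log \sum_j e^{\beta(\tau(U^j) - \max_i \tau(U^i))}] \to 0$, whence $-||\sigma_{\vec{p}}|| \geq 0$. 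Combined with the easy inequality this gives $||\sigma_{\vec{p}}|| = 0$. The main subtlety is the a.s. uniqueness of the maximizer, which is precisely what the non-atomic hypothesis on level sets of $\tau$ buys us; without it, ties on positive-measure sets would keep the limiting inner sum strictly above $1$ and the greedy strategy itself would be ambiguously defined. I do not see Lemma \ref{orderStats} entering this duality-based argument directly; it would presumably drive a more combinatorial proof bounding the number of paths that deviate from the greedy on a positive fraction of trials via the gap variables $V_i = \max_j \tau(U_i^j) - (\text{second max})$, but the duality route sidesteps the delicate problem of controlling $\int \tau\, d\mu - \int \tau\, d\sigma_{\vec{p}}$ under Levy-Prokhorov proximity for a general bounded measurable $\tau$.
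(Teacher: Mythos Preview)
Your argument is correct and takes a genuinely different route from the paper's proof. The paper argues by contradiction directly from the order-statistics definition of grid entropy: assuming $||\sigma_{\vec{p}}||>\delta$, a binomial-coefficient count plus pigeonhole produces, among the $\lfloor e^{n\delta}\rfloor$ paths closest to $\sigma_{\vec p}$, one that deviates from the greedy path on at least $\lceil n\epsilon\rceil$ trials; Lemma~\ref{orderStats} (applied to the gap variables $Z_i=\max_j\tau(U_i^j)-\mathrm{2nd\,max}_j\,\tau(U_i^j)$) then forces the expected $\tau$-passage-time deficit of that path relative to the greedy one to stay bounded below by a constant, contradicting the fact that both empirical measures converge weakly to $\sigma_{\vec p}$. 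Your approach instead exploits the convex duality of Theorem~\ref{gridEntropyPart1}(iv): you compute $G^\beta(\tau)$ explicitly via the SLLN, insert the given $\tau$ as test function, and let $\beta\to\infty$, using the a.s.\ uniqueness of the maximizer to drive $E\!\left[\log\sum_j e^{\beta(\tau(U^j)-\max_i\tau(U^i))}\right]\to 0$ by dominated convergence. This is shorter and bypasses Lemma~\ref{orderStats} entirely; the paper's combinatorial route, on the other hand, stays closer to the primal definition and does not invoke the duality theorem as a black box. Note that your closed form for $G^\beta(\tau)$ is exactly what the paper later records in Section~\ref{grid} (Theorem~D), so there is no circularity: you rederive it independently from the factorization of the partition function and the SLLN.
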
 

\begin{remark}
The types of strategies considered in this theorem are deterministic so the resulting $\sigma_{\vec{p}}$ are all extreme points by Theorem  \ref{thm4}. On the other hand, Theorem \ref{thm4} establishes that all extreme points can be realized as $\sigma_{\vec{p}}$ for a single-edge strategy $\vec{p}$ choosing whichever observed label maximizes the density $f_{\vec{p}} \in [0,D]$ where $f_{\vec{p}}(\mbox{Unif}[0,1])$ is the $D \cdot \mbox{Beta}(D,1)$ distribution. Thus  Theorem \ref{entropy0} captures what happens for all extreme points.
\end{remark}

\begin{proof}
Suppose $||\sigma_{\vec{p}}|| > \delta > 0$. Let $\alpha = \frac{\delta}{\ln 2}$. For $n \in \bN$ and $ 1 \leq m_n \leq \lfloor e^{n \delta} \rfloor = \lfloor 2^{n \alpha} \rfloor$ consider the event-dependent  paths $\pi_{n,m_n}$ corresponding to 
\[ \min_{\pi: 0\rightarrow n}^{m_n} \rho\bigg(\frac1n \mu_{\pi}, \sigma_{\vec{p}} \bigg)  \]

Since $\delta < ||\sigma_{\vec{p}}||$ then by definition of grid entropy,
\[ \min_{\pi: 0\rightarrow n}^{\lfloor e^{n\delta} \rfloor } \rho\bigg(\frac1n \mu_{\pi}, \sigma_{\vec{p}} \bigg)  \rightarrow 0 \ \mbox{a.s.}\]
hence $\frac1n \mu_{\pi_{n, m_n}} \Rightarrow \sigma_{\vec{p}}$ a.s. regardless of the sequence $(m_n)$.

Also for $i \geq 0$ define the random variables
\[Y_i := \max \limits_{1 \leq j \leq D} (\tau(U_i^j)), Y_i' = \mbox{2nd} \ \max \limits_{1 \leq j \leq D} (\tau(U_i^j)), Z_i := Y_i - Y_i'\]

Now it is  a classic result that for $\epsilon > 0$,
\[ \sum_{i=0}^{\lfloor n\epsilon \rfloor} \binom{n}{i} \leq 2^{nL} \ \mbox{with} \ L = L(\epsilon)= \epsilon \log \epsilon + (1-\epsilon) \log (1-\epsilon) \]
Take $\epsilon > 0$ small enough so that $L +\epsilon \log(D-1) < \frac12 \alpha$ and take $N \in \bN$ so that $ \forall n \geq N$,  $2^{nL} (D-1)^{n\epsilon} < \lfloor 2^{n \alpha}\rfloor$.

Consider any $n \geq N$. Then the number of paths $\pi: 0 \rightarrow n$ with $< \lceil n\epsilon \rceil$ of its edges   not having the maximal edge label in their trial  is at most
\[ \sum_{i=0}^{\lfloor n\epsilon \rfloor} \binom{n}{i} (D-1)^{i} \leq 2^{nL} (D-1)^{n\epsilon} < \lfloor 2^{n \alpha}  \rfloor \]
By the Pigeonhole Principle, there is an event-dependent path $\pi_{n, m_n}: \vec{0} \rightarrow n$ s.t.  at least $\lceil n\epsilon \rceil$ of its edges do not have the maximal edge label in their trial. Let $\pi_{n, m_n}$ have edges $e_0^{j_0}, \ldots, e_{n-1}^{j_{n-1}}$ and let $I_n \subseteq \{0,\ldots,n-1\}, |I_n| = \lceil n\epsilon \rceil$ be a set of indices $i$ for which $\tau(U_i^{j_i}) \neq Y_i$.

We  compute an upper bound for the passage time along $\pi_{n, m_n}$ (with respect to $\tau$) by splitting the sum over the edges with index in $I_n$ and those not in $I_n$: 
\begin{align*}
\langle \tau, \frac1n \mu_{\pi_{n,m_n}} \rangle &= \frac1n\sum_{i=0}^{n-1} \tau(U_i^{j_i}) \\
& \leq \frac1n \sum_{i\in I_n^C} \max_{1\leq j \leq D} (\tau(U_i^j)) + \frac1n \sum_{i \in I_n} \mbox{2nd} \max_{1\leq j \leq D} (\tau(U_i^j)) \\
&=  \frac1n \sum_{i\in I_n^C} Y_i + \frac1n \sum_{i \in I_n} Y_i'
\end{align*}
On the other hand,  the passage time along the $\tau$-optimal path $\pi_{0\rightarrow n}(\vec{p})$ is
\begin{equation*}
\langle \tau, \frac1n \mu_{\pi_n(\vec{p})} \rangle = \frac1n\sum_{i=0}^{n-1} \max_{1\leq j \leq D} (\tau(U_i^j))  = \frac1n\sum_{i=0}^{n-1} Y_i
\end{equation*}
hence
\begin{equation}\label{lowerB}
\langle \tau, \frac1n \mu_{\pi_n(\vec{p})} \rangle - \langle \tau, \frac1n \mu_{\pi_{n,m_n}} \rangle \geq \frac1n \sum_{i \in I_n} (Y_i - Y_i') = \frac1n \sum_{i \in I_n} Z_i \geq \frac1n \sum_{k=0}^{\lfloor n\epsilon \rfloor} Z_{k:n} 
\end{equation}
Now $Z_i$ are i.i.d., non-negative and satisfy
\begin{align*}
 P(Z_i = 0) &= P\bigg(\max \limits_{1 \leq j \leq D} (\tau(U_i^j)) = \mbox{2nd} \ \max \limits_{1 \leq j \leq D} (\tau(U_i^j))\bigg) \\
 &\leq P(\exists 1 \leq j_1 < j_2 \leq D \ \mbox{s.t.} \ \tau(U_i^{j_1}) = \tau(U_i^{j_2}) )\\
 &=0
\end{align*}
since $\tau$ is not constant on sets of positive $\Lambda$ measure. Thus we can take the expectation in \eqref{lowerB} and apply Lemma \ref{orderStats} to get that $\exists C > 0$ s.t.
\begin{equation}\label{EQ21}
E \bigg[ \langle \tau, \frac1n \mu_{\pi_n(\vec{p})} \rangle -\langle \tau, \frac1n \mu_{\pi_{n,m_n}} \rangle \bigg]  \geq  \frac1n \sum_{k=0}^{\lfloor n\epsilon \rfloor} E[Z_{k:n}] \geq C
\end{equation}
for large $n$.

Now, by assumption, $\frac1n \mu_{\pi_{n,m_n}}  \Rightarrow \sigma_{\vec{p}}$ a.s.. On the other hand, by nature of the model, the $\tau$-optimal length $m$ path $\pi_{m}(\vec{p})$ contains the $\tau$-optimal length $n$ path $\pi_n(\vec{p})$ for any $m \geq n$; thus we can apply the Glivenko-Cantelli Theorem (Theorem \ref{thm1}) to get that the empirical measures $\frac1n \mu_{\pi_{n}(\vec{p})} $ converge weakly to $\sigma_{\vec{p}}$ a.s.. Recall from Section \ref{coupling} that a.s. the pushforward $\tau_{\ast}$ preserves weak limits of empirical measures so
\begin{align*} 
\langle \tau, \frac1n \mu_{\pi_n(\vec{p})} \rangle -\langle \tau, \frac1n \mu_{\pi_{n,m_n}} \rangle &= \langle 1, \tau_{\ast}(\frac1n \mu_{\pi_n(\vec{p})}) \rangle -\langle 1, \tau_{\ast}(\frac1n \mu_{\pi_{n,m_n}}) \rangle \\
&\rightarrow \langle 1, \tau_{\ast}(\sigma_{\vec{p}}) \rangle - \langle 1, \tau_{\ast}(\sigma_{\vec{p}}) \rangle \\
&= 0  
\end{align*}
a.s.. But $\tau$ is bounded so by the Bounded Convergence Theorem we get
\[E \bigg[ \langle \tau, \frac1n \mu_{\pi_n(\vec{p})} \rangle -\langle \tau, \frac1n \mu_{\pi_{n,m_n}} \rangle \bigg] = 0\]
which contradicts \eqref{EQ21}. Thus $||\sigma_{\vec{p}}||=0$. 
\end{proof}

\subsection{Grid Entropy via Gibbs Free Energy}
Fix $\beta > 0$. Suppose $\tau: [0,1] \rightarrow \bR$ is a bounded measurable function. From the definition of $\beta$-Gibbs Free Energy,
\[ G^{\beta}(\tau) := \lim_{n \rightarrow \infty} \frac1n \log \sum_{\pi: 0 \rightarrow n} e^{\beta T(\pi)} = \lim_{n \rightarrow \infty} \frac1n \sum_{i=0}^{n-1} \log \sum_{j=1}^D e^{\beta \tau(U_i^j)} \ \mbox{a.s.} \]
But $\log \sum \limits_{j=1}^D e^{\beta \tau(U_i^j)}$ are i.i.d. in $i$ hence by the SLLN,
\[ G^{\beta} (\tau) = E \bigg[\log \sum_{j=1}^D e^{\beta \tau(U^j)} \bigg] \ \mbox{a.s.}\]
where $U^j$ are i.i.d. Unif[0,1].

Of course, grid entropy is simply the negative convex conjugate of $\beta$-Gibbs Free Energy by Theorem \ref{gridEntropyPart1}:
\begin{equation} -||\nu|| = \sup_{\tau} \bigg[ \beta \langle \tau, \nu \rangle - G^{\beta}(\tau) \bigg] = \sup_{\tau} \bigg[\beta \langle \tau, \nu \rangle - E \bigg[\log \sum_{j=1}^D e^{\beta \tau(U^j)} \bigg] \bigg]  \ \forall \nu \in \mathcal{M}
\end{equation}
where the supremum is over bounded measurable functions $\tau:[0,1] \rightarrow \bR$ and where $\langle \tau, \nu \rangle$ denotes the integral $ \int_0^1 \tau(u) d\nu$.

\section{Next Steps}
We have characterized the extreme points of the set of limit points of empirical measures and have shown that these extreme points have grid entropy 0. A natural next question is whether only the extreme points have grid entropy 0.

Recalling that grid entropy is concave, another key question to ask is whether it is strictly concave in the model used in this paper; this might provide insights on whether it is strictly concave in general, which would imply that the Gibbs Free Energy is strictly concave, a major open research problem. 

 Furthermore, it would be nice to compute grid entropy of non-extremal points, and also try to describe the subset of $\mathcal{R}$ of maximizers for  the variational formula for the Gibbs Free Energy presented in Section 5.2 of \cite{gatea}. 
 
 These are all questions well worth exploring in the future.
\section{Acknowledgments}
I would like to thank my friends and family for their continual support during my graduate studies. Special thanks goes to my advisor B\'{a}lint Vir\'{a}g, for his patience, guidance, and optimism in the face of setbacks. Finally, this work would not have been possible without the funding from my NSERC Canadian Graduate Scholarship-Doctoral.

\bibliography{biblio}

\end{document}